\documentclass[11pt]{article}
\usepackage[english]{babel}
\usepackage[utf8]{inputenc}
\usepackage{johd}
\usepackage{amsmath}
\usepackage{mathtools}
\usepackage{verbatim}
\usepackage{appendix}
\usepackage{bm}
\allowdisplaybreaks[4]

\numberwithin{equation}{section}

\usepackage{amsmath,amsthm}       
\usepackage{mathrsfs}      
\usepackage{amssymb}       
\usepackage{amsfonts}      

\usepackage{cancel}
\usepackage{indentfirst}

\usepackage{graphicx}
\usepackage{subcaption}
\usepackage{wrapfig}  
\usepackage{multicol}
\usepackage{tikz}     
\usetikzlibrary{arrows.meta}
\usepackage[framemethod=TikZ]{mdframed}
\usepackage{color}
\usepackage{authblk}
\usepackage{textcomp}
\usepackage{xcolor}

\newtheorem{thm}{Theorem}[section]
\newtheorem{lemma}[thm]{Lemma}
\newtheorem{definition}[thm]{Definition}
\newtheorem{assumption}[thm]{Assumption}
\newtheorem{remark}[thm]{Remark}
\newtheorem{prop}[thm]{Proposition}
\newtheorem{example}[thm]{Example}
\newtheorem{corollary}[thm]{Corollary}

\def\cA{{\cal A}}
\def\cB{{\cal B}}

\def\cD{{\cal D}}

\def\cF{{\cal F}}
\def\cG{{\cal G}}
\def\cH{{\cal H}}

\def\cJ{{\cal J}}
\def\cK{{\cal K}}
\def\cL{{\cal L}}
\def\cM{{\cal M}}
\def\cN{{\cal N}}
\def\cO{{\cal O}}
\def\cP{{\cal P}}

\def\cW{{\cal W}}
\def\cX{{\cal X}}

\def\cZ{{\cal Z}}

\def\dbA{\mathbb{A}}

\def\dbE{\mathbb{E}}
\def\dbF{\mathbb{F}}
\def\dbG{\mathbb{G}}

\def\dbL{\mathbb{L}}

\def\dbP{\mathbb{P}}
\def\dbR{\mathbb{R}}
\def\dbS{\mathbb{S}}
\def\dbT{\mathbb{T}}

\def\dbX{\mathbb{X}}

\def\dbZ{\mathbb{Z}}

\newcommand{\ol}{\overline}
\newcommand{\ul}{\underline}

\newcommand{\ba}{\begin{array}}
\newcommand{\ea}{\end{array}}
\newcommand{\be}{\begin{equation}}
\newcommand{\ee}{\end{equation}}
\newcommand{\bea}{\begin{eqnarray}}
\newcommand{\eea}{\end{eqnarray}}
\newcommand{\beaa}{\begin{eqnarray*}}
\newcommand{\eeaa}{\end{eqnarray*}}

\def\a{\alpha}
\def\b{\beta}

\def\d{\delta}
\def\e{\varepsilon}

\def\l{\lambda}
\def\m{\mu}

\def\si{\sigma}

\def\f{\varphi}
\def\th{\theta}

\def\o{\omega}

\def\D{\Delta}

\def\O{\Omega}
\def\q{\quad}

\newcommand{\bmu}{\mbox{$\raisebox{-0.59ex}
  {$l$}\hspace{-0.18em}\mu\hspace{-0.88em}\raisebox{-0.98ex}{\scalebox{2}
  {$\color{white}.$}}\hspace{-0.416em}\raisebox{+0.88ex}
  {$\color{white}.$}\hspace{0.46em}$}{}}

\begin{document}

%\begin{linenumbers}

\title{\bf On Information Controls}

\author{
Zihao Gu\thanks{\noindent USC Mathematics Department, Los Angeles, 90089; email: zihaogu@usc.edu.}
 ~ and ~ Jianfeng Zhang \thanks{ \noindent USC Mathematics Department, Los Angeles, 90089;
email: jianfenz@usc.edu. This author is supported in part by NSF grant  \#DMS-2510403. The author would like to thank Jaksa Cvitanic for very inspiring discussions on persuasion games in the early stage of the project, and Nizar Touzi for lots of insightful discussions.}}

\date{\today}
\maketitle

\begin{abstract}
In this paper we study an optimization problem in which the control is  {\it information}, more precisely,  the control is a $\sigma$-algebra or a filtration. In a dynamic setting, we establish the dynamic programming principle and the law invariance of the value function. The latter requires a condition slightly stronger than the (H)-hypothesis for the admissible filtration,  and enables us to define the value function on $\mathcal P_2(\mathcal P_2(\mathbb R^d))$, the space of laws of random probability measures. By using a new It\^o's formula for smooth functions on $\mathcal P_2(\mathcal P_2(\mathbb R^d))$, we characterize the value function of the information control problem by an Hamilton–Jacobi–Bellman equation on this space.
\end{abstract}

\noindent{\bf Keywords}: Information control, It\^o's formula,   Hamilton–Jacobi–Bellman equation, dynamic programming principle, (H*)-hypothesis, law invariance, insider trading, persuasion games 

\bigskip
\noindent{\it 2020 AMS Mathematics subject classification}: 35R15, 49N30, 60H30, 91A27
%\tableofcontents

%\vfill\eject
\section{Introduction}
For a probability space $(\O, \cF, \dbP)$,  the $\si$-algebra $\cF$ is typically interpreted as the information, and the probability measure $\dbP$ as the law of the state variable $X$. In the majority of the stochastic control literature, the player controls the law of $X$, denoted as $\dbP^\a$, and the objective is to choose a control $\a$ to maximize certain utility $J(\dbP^\a, \a)$. Note that $\a$ is required to be measurable to the information $\cG$ the player observes, which is often a sub-$\si$-algebra of $\cF$. Then, by using  $\a\in \cG$ to denote the $\cG$-measurability of $\a$, the value of the standard control problem is:
\bea
\label{cJ0}
\cJ_0(\cG) := \sup_{\a\in \cG} J(\dbP^\a, \a).
\eea
At above the player's information $\cG$ is fixed. In many practical situations, however, the player may acquire information through various means, such as learning,  conducting experiments, purchasing it,  or even (illegally)  spying. That is, he may control the $\si$-algebra $\cG$. Note that $\cJ_0(\cG)$ is increasing in $\cG$: $\cJ_0(\cG_1) \le \cJ_0(\cG_2)$ whenever $\cG_1\subset \cG_2$, and thus the optimization problem $\sup_\cG \cJ_0(\cG)$ is trivial. However, typically the player will incur a certain cost $C(\cG)$ to acquire the information $\cG$, then his objective becomes 
\bea
\label{cJ}
\cJ(\cG) := \cJ_0(\cG) - C(\cG).
\eea
In general, $\cJ$ is no longer monotone in $\cG$; in other words, it is not true that more information is always better. 
Then the following information control problem, or its dynamic version with filtration $\dbG$ as a control, makes sense:
\bea
\label{supcJ}
\sup_\cG \cJ(\cG) \q\mbox{or}\q  \sup_\dbG \cJ(\dbG),
\eea
where the supremum  is over all admissible $\si$-algebras $\cG$ or admissible filtrations $\dbG$. The information has been a central consideration in stochastic controls, especially in games. There are numerous publications related  to information, and we shall provide a brief literature review on works related particularly to information controls in Subsection \ref{sect-literature} below. However, there seems to lack a systematic study on problems controlling the $\si$-algebras or filtrations directly. As a long term project we aim to investigate this important issue systematically,  and this paper serves as a first step, by focusing on the control problem \eqref{supcJ} only. 

Before turning to our results, we would like to provide further motivations for problems in the form \eqref{cJ}-\eqref{supcJ}, where in particular $\cJ$ is not monotone. Besides the possible cost for acquiring information, typically it is also not free to analyze or compute the information. For example, Saparito-Zhang \cite{SZ-delay} studied the following information delay problem. Let $\dbF = \{\cF_t\}_{0\le t\le T}$ denote the information the player observes. Note that in practice it takes some time, say a constant $\d>0$ for simplicity,  for the player to analyze the observed information and compute the corresponding strategy, then at time $t$ the player can only use the information up to time $t-\d$, and thus the admissible control $\a_t$ needs to be $\cF_{t-\d}$-measurable. That is, one can only use $\dbG^\d = \{\cG^\d_t\}$ with $\cG^\d_t := \cF_{t-\d}$. One may reduce $\d$ and hence increase $\dbG^\d$,  say, by hiring more capable experts and/or improving the computational power, which will of course increase the cost as well.

Another related situation is that using the information may involve costs or risks, as illustrated by the illicit insider trading. Let $\cG$ denote the inside information the insider uses in his decision making, then $C(\cG)$ stands for the legal risk he faces. In this case the insider may decide how much inside information he wants to use in order to maximize the net utility.  We shall present a static example which is essentially a linear quadratic problem and can be solved explicitly in Section \ref{sect-insider1}, and a dynamic example which leads to an HJB equation in Section \ref{sect-insider2}.  

The more interesting cases arise from nonzero-sum games, with Braess’s Paradox  as a typical example, see Braess-Nagurney-Wakolbinger \cite{Braess_2005} and Roughgarden \cite{Roughgarden_2016} for details and Appendix \ref{appendixA} for a brief introduction. It is well-known that the game values are not monotone in terms of information. For example, consider a two player nonzero-sum game where Player $1$ has some private information. If Player 1 shares some of his private information to Player 2, it will change the equilibrium, which may possibly lead to higher utility for Player 1. Then Player $1$ needs to decide how much information he wants to share with Player 2. This type of information sharing is a common practice, even among rivals.\footnote{One example could be US and Soviet Union during the cold war. Quite often, without proper communication, there would be misjudgments on the other side's intention and an equilibrium could lead to very serious conflicts or even wars. Such information sharing was crucial for avoiding these disastrarous consequences.} We shall provide a motivating example in Appendix \ref{appendixA}.
 Another highly related problem is the Bayesian persuasion game, which has received very strong attention in the literature in recent years. In a simple setting, this is a Stackelberg game where the leader has private information and designs a signal to reveal it. We shall provide a brief introduction as well as some technical discussions in Appendix \ref{appendixA}.  It will be very interesting to provide a unified approach for persuasion games and our information control problems,  which we leave for future research.

We now turn to the contents of this paper. In Section  \ref{sect-insider1} we study an insider trading problem with legal risk. Besides the standard legal risk due to aggressive abnormal trading, we also introduce a new type of legal risk due to the pattern of insider trading. The latter is directly related to our information control. In a simple setting, we solve the problem explicitly and show that the solution relies on $\cL_{\cL_{X|\cG}}$, the law of conditional law, where $X$ is the fundamental value of the asset and $\cG\subset \si(X)$ is the information the insider would use. This motivates us to consider $\cP(\cP(\dbR^d))$, the space of laws of random probability measures, as our underlying space.  We shall study the basic properties of this space in Section \ref{sect-space}. We note that, while serving as a motivating example, the main result of Section \ref{sect-insider1} is interesting in its own right and provides new insights for the insider trading problem.    

Our main focus is on dynamic problems, where the control becomes a sub-filtration $\dbG = \{\cG_t\}_{t\in [0, T]}$.  In a Brownian motion setting with the largest filtration $\dbF = \cF_0 \vee \dbF^B$, for technical reasons we assume the admissible controls $\dbG\subset \dbF$ satisfy a constraint, called the (H*)-hypothesis.
That is, once the player chooses to use information $\cG_t$ at time $t$, at later time he can only explore the new information $\si(B_s-B_t, s\ge t)$ induced by the increments of $B$, but cannot use the remaining information in $\cF_t\backslash \cG_t$.  Alternatively, this can be interpreted as that the player does not keep track of the unused information and thus cannot dig out more information from the past. This constraint simplifies our analyses significantly, and we shall explore the general case without this constraint in future research. We note that the (H*)-hypothesis is slightly stronger than the (H)-hypothesis, which is widely used in the literature, especially in credit risk theory, see e.g. Bremaud-Yor \cite{BY},  Dellacherie-Meyer \cite{DM}, Elliott-Jeanblanc-Yor \cite{EJY}, and  Kusuoka \cite{Kusuoka}.  

We next introduce the dynamic value function of our information control problem, and establish two crucial properties: the dynamic programming principle and the law invariance. The former does not involve the subtle measurability issue and thus its proof is straightforward, as in deterministic control problems. This property can be easily extended to more general settings. The law invariance, however, is quite subtle and relies heavily on the (H*)-hypothesis, and the general case will be left for future research. We note that the law invariance enables us to write the value function  on the space $\cP_2(\cP_2(\dbR^d))$, the subspace of $\cP(\cP(\dbR^d))$ equipped with the $2$-Wasserstein distance. Some basic properties of this space will be presented in Section \ref{sect-space}. As a consequence, we shall also establish the Lipschitz regularity of the value function, under the $2$-Wasserstein distance.

Another main result of this paper is the It\^{o} formula for smooth functions on $\cP_2(\cP_2(\dbR^d))$. Our results, both the definition of the derivatives and the It\^{o} formula, are natural generalization of those on $\cP_2(\dbR^d)$ in the mean field theory. We refer to Carmona-Delarue \cite{CD1, CD2} for a general exposition of the mean field theory, and Buckdahn-Li-Peng-Rainer \cite{BLPR}, Chassagneux-Crisan-Delarue \cite{CCD}, and Wu-Zhang \cite{WuZhang} for It\^{o} formula on $\cP_2(\dbR^d)$. There have been various extensions of It\^{o}'s formula, especially on measure-valued processes or on conditional laws, see e.g. Cox-Kallblad-Larsson-Svaluto-Ferro \cite{cox2024controlled},  Guo-Zhang \cite{guo2024itosformulaflowsconditional},  Liu-Zhang \cite{LiuZhang}, and Talbi-Touzi \cite{TalbiTouzi}. Roughly speaking, these works consider the It\^{o} formula in the form $u(t, \cL_{X_t|\cG_t})$, while we consider the form $U(t, \cL_{\cL_{X_t|\cG_t}})$, and they are consistent when our function $U$ is linear in $\bmu\in \cP_2(\cP_2(\dbR^d))$:  $U(t, \bmu) = \int_{\cP_2(\dbR^d)} u(t, \mu) \bmu(d\mu)$.  We shall provide some discussions on this in Section \ref{sect-general}. We note that, instead of using finite dimensional approximation and applying the result of the standard It\^{o} formula, as in \cite{WuZhang} we follow the arguments for the standard It\^{o} formula, namely by using time discretization and Taylor expansion. While it involves some subtlety due to the filtration issue, in particular it relies on the (H*)-hypothesis, the main idea of our proof of the It\^{o} formula is quite natural.

Combining the dynamic programming principle and the It\^{o} formula, it follows from rather standard arguments to derive the HJB equation on the space $[0, T]\times \cP_2(\cP_2(\dbR^d))$ for our dynamic value function of the information control problem. We show that, when the value function is smooth, it is the unique classical solution to the HJB equation. Moreover, we provide a verification result which helps to construct an optimal filtration by using the smooth value function. However, we shall note that, since the problem is nonlinear and the variable $\bmu$ is infinite dimensional, it is in general a tall order to obtain a classical solution. We shall leave the viscosity solution approach for future research. We also note that there have been very serious efforts on viscosity solutions for mean field control problems, namely HJB equations on $[0, T]\times \cP_2(\dbR^d)$, see e.g. Zhou-Touzi-Zhang \cite{ZTZ} and the references therein,  and it will be interesting to explore  those approaches for our HJB equation on $[0, T]\times \cP_2(\cP_2(\dbR^d))$.

The rest of the paper is organized as follows.  In Subsection \ref{sect-literature} we provide a brief literature review on some works closely related to our information control. In Section \ref{sect-insider1} we present a toy model of insider trading, which motivates us to consider the space $\cP_2(\cP_2(\dbR^d))$. Some basic properties of this space are presented in Section \ref{sect-space}. In Section \ref{sect-model} we introduce our continuous time information control problem and establish the basic properties of its dynamic value function. Section \ref{sect-Ito} is devoted to the It\^{o} formula for smooth functions on $\mathcal P_2(\mathcal P_2(\mathbb R^d))$. In Section \ref{sect-HJB} we derive the Hamilton–Jacobi–Bellman equation  for the dynamic value function and establish its well-posedness, provided that the value function has the desired regularity. In Section \ref{sect-general} we extend the It\^{o} formula further and discuss its connection with the related It\^{o} formulae in the the mean field literature. Finally, in Appendix \ref{appendixA} we present some examples, including a brief discussion on persuasion games; and in Appendix \ref{appendixB} we complete some technical proofs.

 \subsection{Some related literature}
 \label{sect-literature}
 First we emphasize that here information refers to $\si$-algebra or filtration, which is fundamentally different from the Shannon information \cite{Shannon1, Shannon2} and the Fisher information \cite{Fisher}. Information (in our sense) plays a critical role in many areas, e.g. filtering theory and game theory, and their applications, e.g. in economics and political science. There are numerous publications concerning asymmetric information and partial information, including the algorithmic side in computer science. Clearly, it is neither possible nor our intention to provide an overview here. In this short subsection we shall only comment on some works closely related to our information control, or say those comparing certain values under different information settings.

Our work is closely related to the Bayesian persuasion games, also known as information design, introduced by the seminal paper Kamenica and Gentzkow \cite{kamenica2011bayesian}, see also the earlier work  Crawford-Sobel \cite{crawford1982strategic}. This is a special type of signaling game that provides a specific way to disclose information, cf. Riley \cite{Riley} and Sobel \cite{Sobel}. The sender, who possesses private information represented by a random variable $X$, sends a signal according to a distribution that depends on $X$. The  receiver then infers the distribution of $X$ from the observed signal using the Bayes' rule. This is a Stackelberg game, where the receiver is the follower who solves a standard control problem, while the sender is the leader who designs the signal in an optimal way and thus solves an information control problem. Such a game has received very strong attention in the past decade, see e.g.  Caplin-Dean-Leahy \cite{caplin2022rationally}, Dworczak-Martini \cite{DworczakMartini}, Gentzkow-Kamenica \cite{gentzkow2014costly},  Kamenica \cite{kamenica2019bayesian}, Kamenica-Kim-Zapechelnyuk \cite{kamenica2021bayesian},  Orlov-Skrzypacz-Zryumov \cite{OSZ}, to mention a few. The model has also been extended to settings in which the receiver incurs costs for processing signals and thus also controls information, see e.g. Bloedel-Segal \cite{bloedel2021persuading} and Lipnowski-Ravid \cite{lipnowski2020cheap}, as well as to Nash games in which both players can control certain signals, see e.g. Au-Kawai \cite{AuKawai}.

All the above works consider only static models. Dynamic persuasion games were first studied by Ely \cite{ely2017beeps},
where the sender controls the rate of deterministic information provision. 
The work A\"id-Bonesini-Callegaro-Campi \cite{aid2025continuous} models stochastic dynamic persuasion as a linear-quadratic ergodic game with partial observation. Here the information control is through a process, namely one considers the filtration generated by a controlled process, and thus with the help of the filtering theory technically one may transform the problem into a somewhat standard control problem. More recently, in a discrete time setting, Liu-Zhou \cite{liu2025optimal} considers a Stackelberg game where the leader's private information is a random shock time. The leader designs randomized strategies to disclose the information, which is exactly in the spirit of the persuasion games, and \cite{liu2025optimal}  solves the problem using the dynamic programming principle. 

We would also like to mention the highly related work Kremer-Mansour-Perry \cite{KMP} on incentivizing exploration
via information asymmetry, and some very interesting discussions in Slivkins \cite{Silvkins}. This is also a leader follower problem. The leader, who possesses private information and aims to maximize social welfare, chooses an optimal information disclosure policy to incentivize the (multiple) followers, who arrive sequentially, to explore and generate new information.  

Another closed related area is the insider trading, especially the Kyle-Back model initiated by Kyle \cite{Kyle} and Back \cite{Back}. This is a Nash game in which the insider possesses private information about the fundamental value of a financial asset, and the market maker sets the asset’s price based on her perceived distribution of this information. The insider trader chooses a trading strategy to maximize expected gains by exploiting his informational advantage. Although his objective is not to disclose information, the market maker can infer some information from his trading strategy, typically via filtering. In this sense, the insider effectively engages in information control through the filtration associated with his strategy.
Such a model has also received very strong attention in the literature, we refer to Back-Baruch \cite{BackBaruch},  Bose-Ekren \cite{BoseEkren}, Cetin \cite{cetin}, Cetin-Danilova \cite{CetinDanilova}, Qiao-Zhang \cite{QiaoZhang}, and references therein for works on continuous time models. We would also like to mention the paper Jaimungal-Shi \cite{Jaimungal-Shi} which studies an optimal investment problem in a market with private information. However, instead of facing legal risk, the investor can purchase the information at a certain price. 

The following two interesting works in the mean field game framework are also closely related. The work Bergault-Cardaliaguet-Yan \cite{bergault2025optimal} considered a Stackelberg game in a linear quadratic setting for an insider trading problem. Unlike the Kyle-Back model, here the leader is an informed broker and the follower is a family of uninformed traders. However, the information disclosure mechanism is similar to the previous works: The broker controls the disclosure of the inside information through her trading strategy, and the traders infer the distribution of the inside information through filtering and solve a mean field game problem. So in \cite{bergault2025optimal} the information control is for the broker while the mean field game is for the traders with fixed information.  The work Becherer-Reisinger-Tam \cite{becherer2023mean} considered a mean field game with a different information structure: There is information delay (called observation delay) and each agent can control his/her speed of access to the information with certain cost, so the mean field equilibrium here indeed involves the information control. 

We emphasize that, in all the works mentioned above, information control is exercised either through a random variable or process, or via a conditional/joint law, whereas in this paper we directly control the $\si$-algebra or filtration.

We also remark that the persuasion game typically involves the space $\cP(\cP(\dbR^d))$, as we do in this paper. This space has also been used in the literature for different purposes. For example, Bertucci \cite{bertucci2022mean} studies a mean field game where the initial distribution of the population is unknown, and thus the prior distribution of this unknown initial distribution lies in the space of  $\mathcal{P}(\mathcal{P}(\mathbb{T}^d))$\footnote{The state space in \cite{bertucci2022mean} is the torus $\dbT^d$, rather than $\dbR^d$.}. The work  Beiglbock-Pflugl-Schrott \cite{BPS} studies adapted Wasserstein space in a discrete time setting. To capture the information in filtrations relevant to the process at each time step, they define the canonical spaces $\{\cZ_t\}_{t=0,\cdots, T}$ recursively backward, where the space $\cZ_t$ depends on $\cP(\cZ_{t+1})$. As a result, this involves not only $\cP(\cP(\dbR^d))$ but also (higher-order) nested Wasserstein space.

\section{A toy model of insider trading}
\label{sect-insider1}
To motivate our formulation of information control problems, we study a variation of the well-known Kyle \cite{Kyle} model on insider trading. The model consists of three agents for a financial asset: (i) a representative noise trader, who trades non-strategically and is the liquidity provider; (ii) an insider who observes the asset's fundamental value $X$ and chooses a trading strategy $\a$ to maximize his return; and (iii) a market maker who observes the total trading volume of the noise trader and the insider, but not the inside information $X$, and determines a pricing rule to break-even. The public, especially the market maker, view $X$ as a random variable and have some prior distribution. This is a Nash game between the insider and the market maker.

We consider several variations. First, in order to focus on the information control, we assume the market maker simply chooses $P=\dbE[X]$ as the pricing rule, instead of the conditional expectation (conditional on the total trading volume) as in the standard model.  We remark that this can be justified asymptotically by the camouflage effect of the insider when the population size of the noise traders is large, see the recent paper Ma-Xia-Zhang \cite{MXZ}, and then the game problem is reduced to a control problem of the insider. Second, we assume that the insider will actually use only partial information of $X$, modeled as a sub-$\si$-algebra $\cG\subset \si(X)$. This can be interpreted as that the insider himself is not allowed to trade, but he may choose to leak partial information  $\cG$ to a friend who will trade the asset on his behalf. Then, given $\cG$,  the insider's (or his friend's) optimization problem  is:
\bea
\label{insider-V0}
\sup_{\a\in \cG} \dbE\Big[\a \big(X - P\big)\Big] = \sup_{\a\in \cG} \dbE\Big[\a \big(\dbE[X|\cG] - \dbE[X]\big)\Big].
\eea
 This problem has a trivial solution: $\a = \infty$ if $\dbE[X|\cG] > \dbE[X]$ and $\a = -\infty$ if $\dbE[X|\cG] < \dbE[X]$. This is of course not reasonable in practice, and the missing piece is the legal risk of the insider. 
 
 We model the legal risk in two ways. First, we restrict to $|\a|\le R$ for some constant $R>0$, which is interpreted as the benchmark for abnormal trading and the insider will get investigated and then prosecuted if $|\a|>R$.\footnote{A more natural model is to introduce a cost function $C(\a)$ for abnormal trading. Here for simplicity we choose $C(\a) = \infty 1_{\{|\a|>R\}}$.}  Next, we assume that insider trading follows a pattern rather than occurring as a one-time action. Consequently, even if the trading volume appears ‘normal’ at each instance, the pattern may be detectable in the long run. We model this risk by $-{1\over 2} Var(X|\cG)$:  the larger $\cG$ is, the larger this risk is. Putting these into consideration, the insider's problem becomes:
\bea
\label{insider-V}
V:= \sup_{\cG\subset \si(X)} J(\cG),\q\mbox{where}\q  J(\cG):= \sup_{\a\in \cG, |\a|\le R}  \dbE\Big[\a \big(\dbE[X|\cG] - \dbE[X]\big) + {1\over 2} Var(X|\cG)\Big].
\eea
The optimization problem $J(\cG)$ is trivial: with optimal strategy $\a^* := R~ \mbox{sign}\big(\dbE[X|\cG] - \dbE[X]\big)$,
\bea
\label{insider-u}
J(\cG):= \dbE\Big[ R \big|\dbE[X|\cG] - \dbE[X]\big| + {1\over 2} Var(X|\cG)\Big].
\eea
Then \eqref{insider-V} becomes the following information control problem:
\bea
\label{insider-V2}
V:= \sup_{\cG\subset \si(X)} \dbE\Big[ R \big|\dbE[X|\cG] - \dbE[X]\big| + {1\over 2} Var(X|\cG)\Big].
\eea

\begin{prop}
\label{prop-insider}
Let $X$ be a standard normal. Then the problem \eqref{insider-V2} has an explicit solution:
\bea
\label{insider-sol1}
\displaystyle\mbox{If}~ R \ge R_0: &&\!\!\!\!  V = R_0 R - \tfrac{1}{\pi} + \tfrac{1}{2},\q\mbox{with optimal}~\cG^* := \si\big(\{X> 0\}\big).\\
\label{insider-sol2}
\displaystyle\mbox{If}~ R <  R_0: &&\!\!\!\!   V =  \tfrac{1}{2} R^2 +  \tfrac{1}{2},\q\mbox{with optimal}~\cG^* := \si\big(\{-a_R < X<0\} \cup \{X>a_R\}\big),
\eea
where $R_0:= \dbE[|X|] = \sqrt{\tfrac{2}{\pi}}$, and $a_R = I^{-1}(R)$ for the following funtion $I$: 
\bea
\label{insider-a*}
I(a):= R_0 - 2 \dbE\big[|X| 1_{\{|X|\le a\}}\big], ~a\ge 0.
\eea
\end{prop}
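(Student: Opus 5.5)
Throughout I read $Var(X|\cG)$ inside the expectation, so that $\dbE[Var(X|\cG)]=\dbE[X^2]-\dbE[Y^2]=1-\dbE[Y^2]$ with $Y:=\dbE[X|\cG]$, and $\dbE[X]=0$. The plan is to rewrite \eqref{insider-V2} as
$$V=\tfrac12+\sup_{\cG\subset\si(X)}\dbE[f(Y)],\qquad f(y):=R|y|-\tfrac12 y^2 .$$
Next I reduce to two-atom $\si$-algebras. Given $\cG$, set $B:=\{Y>0\}\in\cG$. On $B$ we have $f(y)=Ry-\frac12y^2$, and on $B^c$ we have $f(y)=-Ry-\frac12y^2$; both are concave. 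Conditional Jensen, applied separately on $B$ and on $B^c$, then gives $\dbE[f(Y)]\le \dbE[f(Y')]$, where $Y':=\dbE[X|\si(B)]$. Indeed $\dbE[Y1_B]=\dbE[X1_B]$ because $B\in\cG$.

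So it suffices to optimize over sets $B\in\si(X)$. Write $p=\dbP(B)$ and $s=\dbE[X1_B]$, and assume without loss of generality $s\ge0$ (otherwise replace $B$ by $B^c$). Then $Y'$ takes the values $s/p$ and $-s/(1-p)$. A direct computation gives
$$\dbE[f(Y')]=2Rs-\tfrac12\,\frac{s^2}{p(1-p)}\le 2Rs-2s^2,$$
using $p(1-p)\le\frac14$, with equality iff $p=\frac12$. Moreover $s\le \dbE[X^+]=R_0/2$, with equality iff $B=\{X>0\}$ up to null sets.

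I then maximize $g(s)=2Rs-2s^2$ over $s\in[0,R_0/2]$; its unconstrained maximizer is $s=R/2$.

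\emph{Case $R\ge R_0$.} Here $R/2\ge R_0/2$, so $g$ is increasing on $[0,R_0/2]$. Hence
$$V\le \tfrac12+g(R_0/2)=\tfrac12+RR_0-\tfrac12R_0^2=RR_0-\tfrac1\pi+\tfrac12 .$$
Equality is attained by $B=\{X>0\}$, since then $p=\frac12$ and $s=R_0/2$. This proves \eqref{insider-sol1}.

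\emph{Case $R<R_0$.} Here the bound is $V\le\frac12+g(R/2)=\frac12+\frac12R^2$; equivalently, $f\le R^2/2$ pointwise. Equality requires $p=\frac12$ and $s=R/2$, i.e.\ $Y'=\pm R$. I take $A:=\{-a<X<0\}\cup\{X>a\}$. By symmetry of the standard normal, $\dbP(A)=\frac12$, and
$$\dbE[X1_A]=\dbE[X1_{\{X>a\}}]-\dbE[|X|1_{\{0<-X<a\}}]=\tfrac{R_0}{2}-\dbE\big[|X|1_{\{|X|\le a\}}\big]=\tfrac12 I(a).$$
Therefore $\dbE[X|A]=I(a)$ and, since $\dbE[X]=0$, $\dbE[X|A^c]=-I(a)$. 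The function $I$ is continuous and strictly decreasing from $I(0)=R_0$ to $I(\infty)=-R_0$. So for $R\in[0,R_0)$ there is a unique $a_R=I^{-1}(R)\ge0$, and $\cG^*=\si(A)$ with $a=a_R$ attains the bound. This proves \eqref{insider-sol2}.

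The main obstacle is the reduction step: it must be justified that any $\cG$ is dominated by the two-atom $\si(\{Y>0\})$. The key point is that the sign split makes $f$ concave on each piece, which is what allows conditional Jensen. Once that is in place, everything else is the elementary one-variable optimization above.
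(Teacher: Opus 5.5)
Your argument is correct, but it takes a different route from the paper's. The paper never reduces to two-atom $\sigma$-algebras. It completes the square, writing $J(\cG)=\tfrac12\big[R^2+1-\dbE[(|Y|-R)^2]\big]$ with $Y=\dbE[X|\cG]$ (equivalently, your $f(y)=\tfrac12R^2-\tfrac12(|y|-R)^2$), and then bounds $\dbE[(|Y|-R)^2]$ directly. For $R\ge R_0$, a single Jensen step gives $\dbE[(|Y|-R)^2]\ge(\dbE|Y|-R)^2\ge(R-R_0)^2$, using $\dbE|Y|\le\dbE|X|=R_0\le R$. For $R<R_0$, it just uses $\dbE[(|Y|-R)^2]\ge0$. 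The checks that $\sigma(\{X>0\})$ and $\sigma(\{-a_R<X<0\}\cup\{X>a_R\})$ attain these bounds are the same as yours.

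Your reduction via piecewise conditional Jensen on $\{Y>0\}$ and its complement is sound. On each piece the Jensen point $\dbE[X|B]$, resp.\ $\dbE[X|B^c]$, has the right sign, so the concave branch agrees with $f$ there. However, for $R<R_0$ you do not actually use the reduction: your bound is just $f\le R^2/2$, which is the paper's $\tilde J\ge0$. For $R\ge R_0$, the paper's one-line Jensen bound replaces both the reduction and the $(p,s)$ optimization. So the ``main obstacle'' you identify can be bypassed entirely.

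In exchange, your route gives structural information. It shows that restricting to two-atom $\sigma$-algebras $\sigma(B)$ loses nothing. It also shows that, among them (after normalizing $\dbE[X1_B]\ge0$), optimality forces $\dbP(B)=\tfrac12$ and $\dbE[X1_B]=\tfrac12\min(R,R_0)$. This explains why both optimizers in the proposition are generated by a set of probability one half.
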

\proof Note that $\dbE[X]=0$ and $\dbE[X^2] = 1$, then
\beaa
J(\cG) &=& \dbE\Big[ R \big|\dbE[X|\cG]\big| + \tfrac{1}{2}\dbE[X^2|\cG] - \tfrac{1}{2} \big|\dbE[X|\cG]\big|^2\Big] \\
&=&-\tfrac{1}{2} \dbE\Big[\big(\big|\dbE[X|\cG]\big| - R\big)^2\Big] + \tfrac{1}{2} R^2 + \tfrac{1}{2} \dbE[X^2]\\
&=& \tfrac{1}{2}\big[-\tilde J(\cG) + R^2 + 1\big],
\eeaa
where
\bea
\label{tildeu}
\tilde J(\cG) :=  \dbE\Big[\big(\big|\dbE[X|\cG]\big| - R\big)^2\Big].
\eea
This implies that
\bea
\label{insider-V3}
V = \tfrac{1}{2}\big[- \tilde V + R^2 + 1\big],\q\mbox{where}\q \tilde V:= \inf_{\cG\subset \si(X)} \tilde J(\cG).
\eea

To solve the optimization problem $\tilde V$, we note that 
\beaa
\dbE\Big[\big|\dbE[X|\cG]\big|\Big] \le \dbE\Big[\dbE[|X||\cG]\Big] = \dbE[|X|] = R_0.
\eeaa
When $R\ge R_0$,  we have
\beaa
\tilde J(\cG) \ge \Big(\dbE\big[\big|\dbE[X|\cG]\big| - R\big]\Big)^2 \ge \Big(R- \dbE[|X|]\Big)^2 \ge \big(R-R_0\big)^2.
\eeaa
Note further that,  for the $\cG^*$ in \eqref{insider-sol1}, we have 
\beaa
\dbE[X|\cG^*] = {\dbE[ X1_{\{X>0\}}]\over \dbP(X>0) } 1_{\{X>0\}} + {\dbE[ X1_{\{X\le 0\}}]\over \dbP(X\le 0) } 1_{\{X\le 0\}} = R_0 1_{\{X>0\}} - R_0 1_{\{X\le 0\}}.
\eeaa
Then
\beaa
\big|\dbE[X|\cG^*]\big| = R_0,~\mbox{a.s., and thus}\q \tilde J(\cG^*) = (R-R_0)^2.
\eeaa
This implies that $\tilde V = \big(R-R_0\big)^2$ and $V = R_0 R - \tfrac{1}{\pi} + \tfrac{1}{2}$, proving \eqref{insider-sol1}.

We now consider the case  $R<R_0$. Denote $E_a:= \{-a < X<0\} \cup \{X>a\}$ and $\cG_a := \si(E_a)$, for $a\ge 0$. Since $X$ is symmetric, it is clear that 
\beaa
&&\dbP(E_a) = \frac{1}{2},\q \dbE[X 1_{E_a}] = \dbE[X 1_{\{X>0\}}] - \dbE[|X|1_{\{|X|\le a\}}] = \tfrac{1}{2} I(a),\\
&&\dbE[X|\cG_a] = {\dbE[X 1_{E_a}]\over \dbP(E_a)} 1_{E_a} +  {\dbE[X 1_{E_a^c}]\over \dbP(E_a^c)} 1_{E_a^c} = I(a) 1_{E_a} - I(a) 1_{E_a^c},
\eeaa
and thus $\big|\dbE[X|\cG_a]\big|=I(a)$, a.s. It is clear that $I(a)$ is continuous and strictly decreasing in $a$. One can easily check that $I(0) = R_0$ and $I(\infty) = -R_0$, then there exists unique $a_R>0$ such that $I(a_R) =R$.  This implies that $\big|\dbE[X|\cG_{a_R}]\big|=I(a_R) = R$, a.s., and thus $\tilde J(\cG_{a_R}) = 0$. Note that $\tilde J(\cG)\ge 0$ for all $\cG$, then we obtain $\tilde V =0$ and hence $V=  \tfrac{1}{2} R^2 +  \tfrac{1}{2}$ with optimal $\cG^*=\cG_{a_R}$.
\qed

\begin{remark}
\label{rem-insider}
The market price is $P=\dbE[X]=0$, so  the asset is underpriced when $X>0$, and overpriced when $X<0$.

\smallskip
\noindent(i) When $R \ge R_0$, the insider will leak information indicating whether the asset is underpriced  or overpriced, without telling the exact value of $X$. The friend trading on his behalf will take the full advantage of this information with optimal trading strategy $\a^* = R 1_{\{X>0\}} + (- R) 1_{\{X<0\}}$, namely he will buy the maximum amount $R$ whenever the asset is underpriced and sell $R$ whenever the asset is overpriced. We note that, although each trade is safe (since the trading volume is below the benchmark $R$),  such a pattern can be easily detected. However, since $R$ is large, the gain from the trading surpasses the legal risk, so the insider is willing to take the risk.

\smallskip
\noindent(ii) When $R < R_0$, the corresponding $\a^* = R 1_{E_{a_R}} + (- R) 1_{E_{a_R}^c}$. In this case, the gain from each trade is not large enough to outweigh the legal risk associated with the pattern. Therefore, the insider chooses $\cG^*$ to camouflage his trading: he buys even when the asset is slightly overpriced ($-a_R < X <0$) and sells when it is slightly underpriced ($0<X <a_R$). 

\smallskip
\noindent(iii) The above analysis is based on our modeling of the pattern-related legal risk as $-\tfrac{1}{2} Var(X|\cG)$. If the regulator detects the pattern in (ii) and pays special attention to this type of trading behavior, the legal risk may change, which in turn could alter the optimal $\cG^*$. Such kind of consideration will lead to a game between the regulator and the insider, which will be left for future research.

\smallskip
\noindent(iv) In \eqref{insider-sol1} the optimal $\cG^*$ is unique, while in  \eqref{insider-sol2} $\cG^*$ is not unique. In the latter case, a sub-$\si$-algebra $\cG$ is optimal if and only if $\big|\dbE[X|\cG]\big| = R$, a.s.  So, when the insider faces a strategic regulator, as described in (iii), he may choose different $\cG^*$ over time to further camouflage his behavior.
\end{remark}

To conclude this section, we note that the integrand $R \big|\dbE[X|\cG] - \dbE[X]\big| + {1\over 2} Var(X|\cG)$ in \eqref{insider-u} is a function of the conditional law $\cL_{X|\cG}$, which is a random measure, or say a $\cP(\dbR)$-valued random variable. Then $J(\cG)$ can be viewed as a (deterministic) function of $\cL_{\cL_{X|\cG}}$, which is an element of $\cP(\cP(\dbR))$. In general, we shall consider the following optimization problem:
\bea
\label{insider-general}
V := \sup_{\cG \subset \cF} G(\cL_{\cL_{X|\cG}}),
\eea
where $G: \cP(\cP(\dbR^d))\to \dbR$, $X$ is an $\dbR^d$-valued random variable on some probability space, and $\cF$ is the largest information the player could possibly access. In Section  \ref{sect-model}, we extend the analysis to the dynamic setting. We note that, in general, $X$ may or may not be $\cF$-measurable; that is, the player may not have full access to 
$X$, even if he is willing to incur all associated costs. In this paper, however, we do not consider this level of generality.

\section{A primer on the space $\cP(\cP(\dbR^d))$}
\label{sect-space}

In this paper we shall consider $\cP_2(\cP_2(\dbR^d))$ equipped with the $2$-Wasserstein metric $\cW_2$. To be precise, for any $p\ge 1$,  $\mu, \mu_1, \mu_2\in \cP_p(\dbR^d)$, $\bmu, \bmu_1, \bmu_2 \in \cP_p(\cP_p(\dbR^d))$,
\bea
\label{W2}
\left.\ba{lll}
\displaystyle W^p_p(\mu_1, \mu_2) := \inf\Big\{ \int_{\dbR^{d}\times \dbR^d} |x_1-x_2|^p \pi(dx_1, dx_2): \pi\in \cP_p(\mu_1, \mu_2)\Big\};\\
\displaystyle\cW^p_p(\bmu_1, \bmu_2) := \inf\Big\{ \int_{\cP_p(\dbR^d)\times \cP_p(\dbR^d)} W_p^p(\mu_1, \mu_2)\Pi(d\mu_1, d\mu_2): \Pi\in \cP_p(  \bmu_1,  \bmu_2)\Big\};\\
\displaystyle \|\mu\|_p^p := W_p^p(\mu, \d_0) = \int_{\dbR^d} |x|^p \mu(dx),\q \|\bmu\|_p^p:= \cW_p^p(\bmu, \d_{\d_0}) = \int_{\cP_p(\dbR^d)}\int_{\dbR^d} |x|^p \mu(dx) \bmu(d\mu).
\ea\right.
\eea
where
\bea
\label{cPmu12}
&& \cP_p(\mu_1, \mu_2) := \Big\{\pi \in \cP_p(\dbR^d\times \dbR^d): \pi(dx_1, \dbR^d) = \mu_1(dx_1), \pi(\dbR^d, dx_2) = \mu_2(dx_2)\Big\};\nonumber\\
&& \cP_p(\bmu_1, \bmu_2) := \Big\{\Pi \in \cP_p(\cP_p(\dbR^d)\times \cP_p(\dbR^d)): \\
&&\qquad\qquad\qquad\qquad \Pi(d\mu_1, \cP_p(\dbR^d)) = \bmu_1(d\mu_1), \Pi(\cP_p(\dbR^d), d\mu_2) = \bmu_2(d\mu_2)\Big\}.\nonumber
 \eea
Since $(\cP_p(\dbR^d), W_p)$ is a Polish space, then the space $(\cP_p(\cP_p(\dbR^d)), \cW_p)$ is also Polish, in particular it is separable,   cf. \cite[Theorem 6.18]{OTV}. 
Moreover, we shall use the following well-known Kantorovich representation for $W_1$ and $\cW_1$:
\bea
\label{W1rep}
\left.\ba{lll}
\displaystyle W_1(\mu_1, \mu_2) = \sup_\f \int_{\dbR^d} \f(x) [\mu_1(dx)-\mu_2(dx)],\\
\displaystyle \cW_1(\bmu_1, \bmu_2) = \sup_\Phi \int_{\cP_1(\dbR^d)} \Phi(\mu) [\bmu_1(d\mu) - \bmu_2(d\mu)],
\ea\right.
\eea
where the supremums are over all Lipschitz continuous functions $\f: \dbR^d\to \dbR$ and $\Phi: \cP_1(\dbR^d) \to \dbR$ with Lipschitz constant $1$.

We start with a simple result which identifies $\cP(\cP(\dbR^d
))$ with sub-$\si$-algebras  $\cG$ when considering the canonical random variable.
\begin{prop}
\label{prop-equiv}
Let  $(\O, \cF, \dbP)$ be a complete probability space and $X$ an $\dbR^d$-valued random variable. For any sub-$\si$-algebras $\cG_1, \cG_2\subset \si(X)$,  $\cL_{\cL_{X|\cG_1}} = \cL_{\cL_{X|\cG_2}}$ if and only if $\cG_1=\cG_2$, a.s.
\end{prop}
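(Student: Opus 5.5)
The plan is to show that, for $\cG\subset\si(X)$, the $\si$-algebra $\cG$ is recovered (up to null sets) from the random measure $\mu^\cG:=\cL_{X|\cG}$, and that $\mu^\cG$ itself is recovered a.s. from its law. Throughout I take a regular version of $\mu^\cG$, viewed as a $\cP(\dbR^d)$-valued random variable. The ``if'' direction is immediate: if $\cG_1=\cG_2$ a.s., then the conditional laws coincide a.s., and hence so do their laws.

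\emph{Step 1: $\cG=\si(\mu^\cG)$ modulo null sets.} Clearly $\si(\mu^\cG)\subset\cG$. Conversely, let $A\in\cG$. Since $\cG\subset\si(X)$, we can write $A=\{X\in B\}$ for some Borel $B$. Then
\[
1_A=\dbP(X\in B\,|\,\cG)=\mu^\cG(B)\quad\mbox{a.s.},
\]
so $1_A$ is a.s. equal to a $\si(\mu^\cG)$-measurable variable. Completeness of the space then gives $\cG=\si(\mu^\cG)$ a.s. It therefore suffices to show that $\cL_{\mu^{\cG_1}}=\cL_{\mu^{\cG_2}}$ implies $\mu^{\cG_1}=\mu^{\cG_2}$ a.s.

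\emph{Step 2: the joint law of $(X,\mu^\cG)$ is determined by $\bmu:=\cL_{\mu^\cG}$.} For Borel $B\subset\dbR^d$ and Borel $C\subset\cP(\dbR^d)$, using $\{\mu^\cG\in C\}\in\cG$ we get
\[
\dbP\big(X\in B,\ \mu^\cG\in C\big)=\dbE\big[\mu^\cG(B)1_{\{\mu^\cG\in C\}}\big]=\int_C \nu(B)\,\bmu(d\nu).
\]
This determines the joint law on a $\pi$-system of rectangles. Hence if $\cL_{\mu^{\cG_1}}=\cL_{\mu^{\cG_2}}$, the pairs $(X,\mu^{\cG_1})$ and $(X,\mu^{\cG_2})$ have the same joint law on $\dbR^d\times\cP(\dbR^d)$.

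\emph{Step 3: conclude via graphs.} Since $\mu^{\cG_i}$ is $\si(X)$-measurable and $\cP(\dbR^d)$ is Polish, the Doob–Dynkin lemma gives Borel maps $f_i$ with $\mu^{\cG_i}=f_i(X)$ a.s. The graph $\Gamma_1:=\{(x,f_1(x))\}$ is Borel in $\dbR^d\times\cP(\dbR^d)$, because the target is Polish with Borel diagonal. By Step 2,
\[
\dbP\big(f_2(X)=f_1(X)\big)=\dbP\big((X,f_2(X))\in\Gamma_1\big)=\dbP\big((X,f_1(X))\in\Gamma_1\big)=1 .
\]
Hence $\mu^{\cG_1}=\mu^{\cG_2}$ a.s., and Step 1 gives $\cG_1=\si(\mu^{\cG_1})=\si(\mu^{\cG_2})=\cG_2$ a.s.

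The main obstacle is the measurability bookkeeping rather than any real difficulty. One must choose regular versions of the conditional laws, check that the graph $\Gamma_1$ is measurable, and apply Doob–Dynkin with a Polish target. Completeness of $(\O,\cF,\dbP)$ is exactly what allows ``a.s.\ equal'' $\si$-algebras to be handled in Step 1.
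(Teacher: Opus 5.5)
Your proof is correct, but it takes a different route from the paper.

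\textbf{The paper's route.} The paper works one event at a time. For $A\in\cG_1$ it writes $1_A=1_{\tilde A}(X)$ and pushes the identity of laws through the Borel map $\nu\mapsto\nu(\tilde A)$, obtaining $\cL_{\dbE[1_A|\cG_1]}=\cL_{\dbE[1_A|\cG_2]}$. It then compares second moments:
\[
\dbE[1_A]=\dbE\big[(\dbE[1_A|\cG_1])^2\big]=\dbE\big[(\dbE[1_A|\cG_2])^2\big]\le \dbE\big[\dbE[1_A^2|\cG_2]\big]=\dbE[1_A].
\]
Equality in the conditional Jensen inequality forces $1_A=\dbE[1_A|\cG_2]$ a.s., i.e.\ $A\in\cG_2\vee\cN$, and symmetry gives the reverse inclusion. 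This argument uses only the second moment of one real-valued functional of $\bmu$. It needs no $\cP(\dbR^d)$-valued regular versions, no Doob--Dynkin factorization into a Polish space, and no Borel-graph argument.

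\textbf{Your route.} Your argument is more structural, and every step checks out. You show three things:
\begin{itemize}
\item $\cG$ coincides with $\si(\cL_{X|\cG})$ modulo null sets.
\item The joint law of $(X,\cL_{X|\cG})$ is the disintegration $\int_C\nu(B)\,\bmu(d\nu)$, so it is a functional of $\bmu$ alone.
\item Consequently the posterior itself is a.s.\ determined by its law.
\end{itemize}

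\textbf{Trade-off.} Your approach gives a stronger intermediate conclusion, namely $\cL_{X|\cG_1}=\cL_{X|\cG_2}$ a.s. It also shows exactly where $\cG_i\subset\si(X)$ is used: in recovering $\cG$ from its posterior, and in the factorization $\mu^{\cG_i}=f_i(X)$. Example \ref{eg-nonequiv} shows this hypothesis cannot be dropped; there $\cG_2=\si(Y)$ is strictly larger than $\si(\cL_{X|\cG_2})$. The cost is the measure-theoretic bookkeeping you already identified, which the paper's moment argument avoids entirely.
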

\noindent Here and in the sequel, $\cG_1\subset \cG_2$, a.s. means $\cG_1\subset \cG_2\vee \cN$, where $\cN$ denotes the set of all null events. Similarly $\cG_1 = \cG_2$, a.s. means that $\cG_1\subset \cG_2$, a.s. and $\cG_2\subset \cG_1$, a.s. 
\proof The "if" direction is obvious, so we shall prove the "only if" direction. Assume $\cL_{\cL_{X|\cG_1}} = \cL_{\cL_{X|\cG_2}}$, and without loss of generality it suffices to prove $\cG_1 \subset \cG_2$. We note that, all the equalities and inclusions in this proof should be interpreted in a.s. sense.

 Since $\cL_{\cL_{X|\cG_1}} = \cL_{\cL_{X|\cG_2}}$, we have $\cL_{\dbE[\phi(X)|\cG_1]} = \cL_{\dbE[\phi(X)|\cG_2]}$ for any bounded and Borel measurable function $\phi$. Now for any $A\in \cG_1\subset \si(X)$, there exists $\tilde A\in \cB(\dbR^d)$ such that $1_A = 1_{\tilde A}(X)$. By considering $\phi = 1_{\tilde A}$, we have $\cL_{\dbE[1_A|\cG_1]} = \cL_{\dbE[1_A|\cG_2]}$. This implies that, since $A\in \cG_1$,
 \beaa
 \dbE\big[1_A\big] = \dbE\big[\big|\dbE[1_A|\cG_1]\big|^2\big] =  \dbE\big[\big|\dbE[1_A|\cG_2]\big|^2\big] \le  \dbE\big[\dbE[|1_A|^2|\cG_2]\big] = \dbE\big[1_A\big].
 \eeaa
So we must have  $ \big|\dbE[1_A|\cG_2]\big|^2= \dbE[|1_A|^2|\cG_2]$, a.s. This implies that $A\subset \cG_2$, and thus $\cG_1\subset \cG_2$.
\qed

When $\cG_1, \cG_2$ are not included in $\si(X)$, however, the "only if" direction in the above statement fails, as the following simple example shows. In this paper, we thus focus on the case that the admissible $\si$-algebras are sub-$\si$-algebras of the state process. The general case will go beyond $\cP(\cP(\dbR^d))$ and we shall leave to future research.

\begin{example}
\label{eg-nonequiv}
Let $(\O, \cF, \dbP)$ be a probability space, which supports independent random variables $X, Y\sim$ Bernoulli(${1\over 2}$). Set $\cG_1 :=\{\emptyset, \O\} $ and $\cG_2 := \sigma(Y)$. Then $\cL_{\cL_{X|\cG_1}} =  \cL_{\cL_{X|\cG_2}} = \d_{\cL_X}$, but obviously $\cG_1 \neq \cG_2$, a.s.
\end{example}

The next result helps us to understand the space $\cP(\cP(\dbR^d))$ further. 
\begin{thm}
\label{thm-support}
Let $(\O, \cF, \dbP)$ be an atomless probability space. For any $\bmu\in \cP(\cP(\dbR^d))$, there exist random variables $X, Y\in \dbL^0(\cF)$ such that $\bmu = \cL_{\cL_{X|Y}}$. 
\end{thm}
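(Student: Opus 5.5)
The plan is a two-stage randomization built from the uniform variables that an atomless space provides. Since $(\O,\cF,\dbP)$ is atomless, it supports two independent random variables $U,V$, each uniformly distributed on $[0,1]$. One way to see this is to take a single uniform $W$ and split its binary digits into the even-indexed and the odd-indexed ones. The idea is to let $Y$ carry a random measure with law $\bmu$, and then to let $X$, given $Y$, be sampled from that measure.

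\textbf{Step 1: the outer level.} The space $\cP(\dbR^d)$, equipped with the weak topology, is Polish. It is therefore Borel isomorphic to a Borel subset of $[0,1]$ (or of $\dbR$). By the standard representation theorem for Polish spaces, there is a Borel map $\Psi:[0,1]\to\cP(\dbR^d)$ with $\cL_{\Psi(U)}=\bmu$. Concretely, transport $\bmu$ to a law $\nu$ on $\dbR$ through the Borel isomorphism $\iota$, write $\nu$'s quantile function as $F_\nu^{-1}$, and set $\Psi:=\iota^{-1}\circ F_\nu^{-1}$, defined $\nu$-a.e. Take $Y:=U$, or equivalently any random variable generating $\si(U)$.

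\textbf{Step 2: the inner level.} I need a jointly measurable map $H:\cP(\dbR^d)\times[0,1]\to\dbR^d$ such that $\cL_{H(\mu,V)}=\mu$ for every $\mu$. For $d=1$, take $H(\mu,v)=F_\mu^{-1}(v)$, the quantile function. It is jointly measurable, because it is monotone and left-continuous in $v$ and measurable in $\mu$ for each fixed $v$. For general $d$, either compose with a Borel isomorphism $\dbR^d\cong B\subset\dbR$, or use successive conditional quantiles (the Knothe--Rosenblatt map) with $V$ split further into $d$ independent uniforms. Measurability in $\mu$ follows from the fact that $\mu\mapsto\mu(A)$ is Borel for every Borel set $A$. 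Then set
\[
X:=H(\Psi(U),V).
\]

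\textbf{Step 3: identify the conditional law.} Because $V$ is independent of $U$, the freezing lemma gives, for any bounded Borel $\phi$,
\[
\dbE[\phi(X)\,|\,U]=\int_0^1\phi\big(H(\Psi(U),v)\big)\,dv=\int_{\dbR^d}\phi\,d\Psi(U).
\]
Hence $\cL_{X|Y}=\Psi(U)$ a.s., and $\cL_{\cL_{X|Y}}=\cL_{\Psi(U)}=\bmu$.

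The main obstacle is the measurability bookkeeping in Step 2, namely the joint measurability of $H$ in $(\mu,v)$ when $d>1$. I would handle it with the Borel isomorphism reduction to $d=1$: push each $\mu$ forward under the isomorphism $\iota_d:\dbR^d\to B\subset\dbR$, apply the one-dimensional quantile construction, and map back. The map $\mu\mapsto\mu\circ\iota_d^{-1}$ is Borel, so joint measurability is preserved. The modification of $\iota_d^{-1}$ on the null set where the quantile falls outside $B$ does not affect the law.
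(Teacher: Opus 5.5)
Your argument is correct, and it takes a genuinely different and much shorter route than the paper. The paper proceeds by cases. Discrete $\bmu$ is handled directly. For $\bmu$ concentrated on $\cP([0,1)^d)$, it approximates by dyadically discretized $\bmu^n$ built along nested partitions of $\cP([0,1)^d)$, and constructs $(X^n,Y^n)$ monotonically so that $X^n$ converges a.s.\ and $\si(Y^n)\uparrow\si(Y)$; this is how it gets around the discontinuity of conditional laws under limits of joint laws. The general case is then transported from $(0,1)^d$ via the coordinatewise logistic bijection $I$. You avoid approximation entirely: a Borel representation $\Psi(U)\sim\bmu$ on the standard Borel space $\cP(\dbR^d)$, a jointly measurable sampler $H$ with $H(\mu,V)\sim\mu$ for every $\mu$, and the freezing lemma give $\cL_{X|U}=\Psi(U)$ exactly. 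Your route needs no moment or support reduction and works verbatim for any standard Borel state space. It also yields Corollary~\ref{cor-multidim-support} at once: take $(\Psi_1,\Psi_2)(U)\sim\bmu$ and $X_i:=H(\Psi_i(U),V)$. What it costs is the Borel isomorphism theorem and the joint measurability of $H$. That is precisely the kernel-randomization lemma \cite[Lemma 3.2 (vii)]{Kallenberg2021} the paper itself uses in Step~3 of the proof of Theorem~\ref{thm-coupling}; applied to the identity kernel $\mu\mapsto\mu$ on $\cP(\dbR^d)$, it proves the theorem in a few lines. The paper's construction is more explicit and self-contained, but long and delicate. Only cosmetic points remain. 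Redefine the quantile at $v\in\{0,1\}$, and $\Psi$ off a null set, so that $X$ is finite everywhere. For $d=1$, joint measurability is seen most directly from $\{(\mu,v):F^{-1}_\mu(v)\le x\}=\{(\mu,v):\mu((-\infty,x])\ge v\}$.
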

\begin{proof} Since the probability space is atomless, it supports independent random variables $U_1\sim$ Uniform$([0,1])$ and $U_2\sim$ Uniform($[0,1]^d$). We prove the result in three cases.

{\bf Case 1.} We first assume $\bmu$ is discrete, that is, $\bmu = \sum_{i\ge 1} p_i \d_{\mu_i}$, where $\mu_i\in \cP(\dbR^d)$. Let $Y\in \dbL^0(\si(U_1))$ be discrete such that $\dbP(Y=y_i) = p_i$ and $X_i \in \dbL^0(\si(U_2))$ such that $\cL_{X_i} = \mu_i$, $i\ge 1$. Define $X := \sum_{i\ge 1} X_i 1_{\{Y=y_i\}}$. Then it is straightforward to verify that $\cL_{\cL_{X|Y}}=\bmu$.

{\bf Case 2.}  We next prove the case that  $ supp(\bmu)\subset   \cP([0,1)^d)$. That is, for each $\mu\in supp(\bmu)$, $supp(\mu) \subset [0,1)^d$. Our idea is to use discrete $\bmu^n$ to approximate $\bmu$, and by Case 1, we have $\bmu^n = \cL_{\cL_{X^n|Y^n}}$ for appropriate  random variables $(X^n, Y^n)$. The goal is then to prove the convergence of $(X^n, Y^n)$. However, note that the conditional distribution operator is discontinuous under the joint distribution, this proof is quite lengthy and we postpone it to Appendix \ref{appendixB}.

{\bf Case 3.} We now consider the general case $\bmu \in \cP(\cP(\dbR^d))$. Introduce $I: \dbR^d\to (0,1)^d$ by $I_i(x) := {e^{x_i}\over 1+e^{x_i}}$, $i=1,\cdots, d$, and for any $\mu\in  \cP(\dbR^d)$, $I_{\#}\mu\in \cP((0,1)^d)$ denotes the push-forward of $\mu$ by $I$. That is, $I_{\#}\m(D) := \mu(I^{-1}(D))$ for any $D\in \cB((0,1)^d)$. Since $I: \dbR^d\to (0,1)^d$ is a bijection, it is clear that the mapping $I_\#: \cP(\dbR^d) \to \cP((0,1)^d)$ is also a bijection. Denote further that $I_{\#}E:=\{I_{\#}\mu: \mu\in E\}$ for any $E\in \cB(\cP(\dbR^d))$, then $I_\#$ is also a bijection from $\cB(\cP(\dbR^d))$ to $\cB(\cP((0,1)^d))$. 
Now given $\bmu \in \cP(\cP(\dbR^d))$, define $\tilde \bmu\in  \cP(\cP((0,1)^d))$ by:
\beaa
\tilde \bmu(\tilde E) := \bmu\Big(\big\{\mu\in \cP(\dbR^d): I_{\#}\mu \in \tilde E\big\}\Big),\q \forall \tilde E\in \cB(\cP((0,1)^d)).
\eeaa
By the previous case, there exist $(\tilde X, Y)$ such that $\cL_{\cL_{\tilde X|Y}} = \tilde \bmu$. It is clear that $0<\tilde X<1$, a.s. We claim that  $\cL_{\cL_{I^{-1}(\tilde X)|Y}}=\bmu$, which proves the theorem with $X := I^{-1}(\tilde X)$.  

To see this, for any $E\in \cB(\cP(\dbR^d))$, we have
\beaa
\bmu(E) &=& \tilde \bmu(I_{\#}E)=\cL_{\cL_{\tilde X|Y}}(I_{\#}E)=\dbP\left(\{\o: \cL_{\tilde X|Y}(\o)\in I_{\#}E \}\right)\\
&=&\dbP\left(\cup_{\mu\in E}\{\o: \cL_{\tilde X|Y}(\o)=I_{\#}\mu\}\right)\\
&=&\dbP\left(\cup_{\mu\in E}\{\o: \cL_{\tilde X|Y}(\o)(\tilde D)=I_{\#}\mu(\tilde D), \forall \tilde D\in \cB((0,1)^d)\}\right)\\
&=&\dbP\left(\cup_{\mu\in E} \{\o:  \cL_{X|Y}(\o)( D)=I_{\#}\mu(I(D))=\mu( D),\forall  D\in \cB(\dbR^d)\}\right)\\
&=&\dbP\left(\cup_{\mu\in E}\{\o: \cL_{X|Y}(\o)=\mu\}\right)=\dbP\left(\{\o: \cL_{X|Y}(\o)\in E\}\right)=\cL_{\cL_{X|Y}}(E).
\eeaa
That is,  $\bmu=\cL_{\cL_{X|Y}}$.
\end{proof}

The following result  will be crucial for the regularity of the value function in the next section. 

\begin{thm}
\label{thm-coupling}
Let $(\O, \cF, \dbP)$ be an atomless probability space. For any  $\bmu_1,\bmu_2\in\cP_p(\cP_p(\dbR^d))$, there exist random variables $X_1,  X_2\in \dbL^p(\cF)$ and $ Y\in \dbL^0(\cF)$  such that 
\bea
\label{coupling}
\cL_{\cL_{X_i|Y}}=\bmu_i, ~i=1,2;\q\mbox{and}\q \dbE[|X_1-X_2|^p]=\cW_p^p(\bmu_1,\bmu_2).
\eea
\end{thm}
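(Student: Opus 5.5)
We construct the coupling in two layers. First we fix an optimal coupling at the level of measures, then an optimal coupling at the level of points inside each realization, and we glue the two measurably. Since $(\cP_p(\dbR^d), W_p)$ is Polish and $\bmu_1,\bmu_2\in\cP_p(\cP_p(\dbR^d))$, the infimum defining $\cW_p^p(\bmu_1,\bmu_2)$ is attained. So there is an optimal $\Pi^*\in\cP_p(\bmu_1,\bmu_2)$ with $\int W_p^p(\mu_1,\mu_2)\Pi^*(d\mu_1,d\mu_2)=\cW_p^p(\bmu_1,\bmu_2)$. We then take a measurable selection of optimal plans: a map $(\mu_1,\mu_2)\mapsto\pi^*_{\mu_1,\mu_2}\in\cP_p(\mu_1,\mu_2)$ which is optimal for $W_p(\mu_1,\mu_2)$ and Borel in $(\mu_1,\mu_2)$. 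This exists by the standard measurable selection of optimal transport plans, cf.\ \cite[Corollary 5.22]{OTV}, since the set of optimal plans is a closed-valued, measurable correspondence.

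Next comes the realization on $\O$. Because the space is atomless, it supports independent $U_1\sim$ Uniform$([0,1])$ and $U_2\sim$ Uniform$([0,1]^{d})$; in fact we will want a uniform variable on a cube rich enough to generate $\dbR^{2d}$-valued variables. The space $\cP_p(\dbR^d)\times\cP_p(\dbR^d)$ is Polish, hence Borel isomorphic to a subset of $[0,1]$. So we can choose a Borel map $h$ with $Y:=h(U_1)$ taking values in $\cP_p(\dbR^d)^2$ and $\cL_Y=\Pi^*$; write $Y=(M_1,M_2)$. For the inner layer we need a jointly measurable kernel representation: a Borel map $g:\cP(\dbR^{2d})\times[0,1]^{d'}\to\dbR^{2d}$ such that $g(\pi,U)\sim\pi$ for every $\pi$ when $U$ is uniform. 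One way to get it is through the Borel isomorphism of $\dbR^{2d}$ with $[0,1]$ together with the quantile function $(\pi,u)\mapsto F_\pi^{-1}(u)$, which is jointly measurable. This step is where we will need to be careful. We then set $(X_1,X_2):=g(\pi^*_{M_1,M_2},U_2)$.

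Finally we verify the required properties. Conditionally on $Y$, the pair $(X_1,X_2)$ has law $\pi^*_{M_1,M_2}$, because $U_2$ is independent of $Y$. Hence $\cL_{X_i|Y}=M_i$ a.s., the $i$-th marginal of $\pi^*_{M_1,M_2}$, and so $\cL_{\cL_{X_i|Y}}=\cL_{M_i}=\bmu_i$. For the cost,
\[
\dbE[|X_1-X_2|^p]=\dbE\Big[\int|x_1-x_2|^p\pi^*_{M_1,M_2}(dx_1,dx_2)\Big]=\dbE[W_p^p(M_1,M_2)]=\cW_p^p(\bmu_1,\bmu_2).
\]
Integrability gives $\dbE|X_i|^p=\|\bmu_i\|_p^p<\infty$, so $X_i\in\dbL^p$.

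The main obstacle is measurability. We need a Borel choice of optimal plans $\pi^*_{\mu_1,\mu_2}$, and a jointly Borel sampling map $g$, so that $(X_1,X_2)$ is a genuine random variable whose conditional law given $Y$ is the intended one. If citing the measurable selection theorem for optimal plans turns out to be delicate, the fallback is to avoid exact optimal selection. We would pick a Borel $\e$-optimal plan selection, for instance from a countable dense family of plans, which yields $\dbE|X_1-X_2|^p\le\cW_p^p+\e$. The inequality $\ge$ always holds, since $\Pi:=\cL_{(\cL_{X_1|Y},\cL_{X_2|Y})}$ is a coupling and the conditional law of $(X_1,X_2)$ given $Y$ couples the inner measures. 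Exact attainment would then require a compactness argument on these joint laws, which is the less clean route.
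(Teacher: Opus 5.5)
Your proposal is correct, and it takes a genuinely different route from the paper for the outer layer.

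\textbf{The paper's route.} The paper first invokes Corollary~\ref{cor-multidim-support}, and therefore the lengthy monotone-approximation construction behind Theorem~\ref{thm-support}. This produces $\xi_1,\xi_2$ and $\cG=\si(Y)$ with $\Pi=\cL_{(\cL_{\xi_1|\cG},\cL_{\xi_2|\cG})}$. It then selects an optimal plan $\pi^\o$ as a function of $\o\in\O_0$, using the measurable maximum theorem \cite[Theorem 18.19]{CAKB}. Because of that parametrization it must pass to the completion $\ol\cG$ and use a projection theorem \cite[Theorem 2.12]{Crauel2002} to get weak measurability. Finally it randomizes with an independent uniform.

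\textbf{Your route.} You take $Y=(M_1,M_2)$ to be the random pair of measures itself, with law $\Pi^*$. You select optimal plans as a Borel function of $(\mu_1,\mu_2)$ on the Polish space $\cP_p(\dbR^d)^2$, via \cite[Corollary 5.22]{OTV}. Then $\cL_{X|Y}=\pi^*_{M_1,M_2}$ follows from the independence of $U_2$ and the joint measurability of $g$. The identity $\cL_{X_i|Y}=M_i$ is then automatic. The final randomization step is the same in both proofs.

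\textbf{What each buys.} Your route is shorter and self-contained. It needs no completion, and it bypasses Corollary~\ref{cor-multidim-support} entirely. The same two-line argument (take $Y\sim\bmu$ and $X=g(Y,U_2)$) in fact proves Theorem~\ref{thm-support} directly, without the appendix construction. The paper's route keeps $Y$ real-valued and reuses its own earlier results. To match the statement literally, replace your $\cP_p(\dbR^d)^2$-valued $Y$ by $\iota(Y)$, where $\iota$ is a Borel isomorphism onto a Borel subset of $[0,1]$; this leaves $\si(Y)$ unchanged. Alternatively, condition on $U_1$, since $\cL_{X|U_1}=\pi^*_{h(U_1)}$ as well. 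Your $\e$-optimal fallback is unnecessary, because the cited selection result already gives an exact Borel choice of optimal plans.
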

The main idea is to combine the following result, which extends Theorem \ref{thm-support} to the space $\cP(\dbR^d)\times \cP(\dbR^d)$, and a measurable selection theorem.  We postpone the proof to Appendix \ref{appendixB}.

\begin{corollary}
\label{cor-multidim-support}
Let $(\O, \cF, \dbP)$ be an atomless probability space. For any $\bmu\in \cP(\cP(\dbR^d)\times\cP(\dbR^d) )$, there exist random variables $X_1, X_2, Y\in \dbL^0(\cF)$ such that $\bmu = \cL_{\left(\cL_{X_1|Y},\cL_{X_2|Y}\right) }$. 
\end{corollary}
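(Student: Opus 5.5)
The plan is to reduce Corollary \ref{cor-multidim-support} to Theorem \ref{thm-support} by encoding a pair of measures as a single measure on a larger space. The natural encoding would be to send $(\mu_1,\mu_2)$ to the product $\mu_1\otimes\mu_2$. That fails, however, because the target $\cL_{(X_1,X_2)|Y}$ need not be a product measure, so the product cannot be recovered from a joint conditional law in the required way. Instead I will use the mixture encoding on $\dbR^{d+1}$:
\[
\Psi(\mu_1,\mu_2) := \tfrac12\, \mu_1\otimes\d_{0} + \tfrac12\, \mu_2\otimes \d_{1}\in \cP(\dbR^{d+1}).
\]
The map $\Psi$ is injective and Borel measurable, with measurable inverse on its image. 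Indeed, $\mu_i$ is recovered as twice the restriction of $\Psi(\mu_1,\mu_2)$ to $\dbR^d\times\{i-1\}$, and these restrictions act measurably on $\cP(\dbR^{d+1})$ under the weak topology.

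Given $\bmu\in\cP(\cP(\dbR^d)\times\cP(\dbR^d))$, set $\tilde\bmu := \Psi_\#\bmu\in\cP(\cP(\dbR^{d+1}))$. Since the theorem itself holds in any dimension, Theorem \ref{thm-support} applied in dimension $d+1$ gives $Z=(\tilde X,\xi)\in\dbL^0(\cF;\dbR^{d+1})$ and $Y$ with $\cL_{\cL_{Z|Y}}=\tilde\bmu$. Almost surely $\cL_{Z|Y}(\o)$ lies in the image of $\Psi$. Consequently $\xi\in\{0,1\}$ a.s., $\dbP(\xi=0|Y)=\tfrac12$, and
\[
\cL_{\tilde X|Y,\ \xi=i-1}=\mu_i(\o),\qquad (\mu_1(\o),\mu_2(\o))=\Psi^{-1}(\cL_{Z|Y}(\o)).
\]

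The remaining step, which I expect to be the main obstacle, is to produce two random variables $X_1,X_2$ whose conditional laws given the same $Y$ are $\mu_1(\o)$ and $\mu_2(\o)$. The variable $\tilde X$ has the right conditional law only on the event $\{\xi=i-1\}$. To fix this I will use a conditional quantile construction.

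First, enlarge the randomness. Because the space is atomless, I take $U\sim$ Uniform$([0,1]^2)$ independent of $(Z,Y)$. If independence from the $(Z,Y)$ given by Theorem \ref{thm-support} is not available directly, I will go back into its proof, which uses only $U_1,U_2$, and reserve an extra independent uniform in advance.

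Next, realize the conditional kernels. I define $X_i := F(\mu_i(Y), U)$, where $F:\cP(\dbR^d)\times[0,1]^2\to\dbR^d$ is a jointly measurable map such that $F(\nu,U)\sim\nu$ for each $\nu$; for instance, iterated conditional quantile functions, with one coordinate of $U$ subdivided to produce $d$ independent uniforms. Here $\mu_i(\cdot)$ is a $\si(Y)$-measurable version of the kernel, which exists because $\cL_{Z|Y}$ is $\si(Y)$-measurable.

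Finally, since $U$ is independent of $Y$, it follows that $\cL_{X_i|Y}=\mu_i(Y)$ a.s. Therefore
\[
\cL_{(\cL_{X_1|Y},\cL_{X_2|Y})}=\cL_{(\mu_1,\mu_2)}=(\Psi^{-1})_\#\tilde\bmu=\bmu.
\]

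The delicate points are the joint measurability of $F$ in $\nu$ and the existence of a uniform independent of $Y$. Given these, the mixture encoding and the auxiliary variable $\xi$ are not even needed: one only needs some $Y$ with $\cL_{\Psi(\mu_1,\mu_2)}$ realized. More simply, one can take $Y$ itself to be a $\cP(\dbR^d)^2$-valued random variable with law $\bmu$, which exists because that space is Polish and the probability space is atomless. That simplification is the route I will actually commit to.
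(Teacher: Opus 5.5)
Your committed route is correct, and it differs from the paper's. The paper pushes $\bmu$ forward under $(\mu_1,\mu_2)\mapsto\mu_1\otimes\mu_2$ to a measure $\hat\bmu\in\cP(\cP(\dbR^{2d}))$ concentrated on product measures. It then applies Theorem \ref{thm-support} in dimension $2d$ to get $(X_1,X_2)$ and $Y$ with $\cL_{\cL_{(X_1,X_2)|Y}}=\hat\bmu$, and reads off $\cL_{X_i|Y}$ as the two marginals.

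You bypass Theorem \ref{thm-support} altogether:
\begin{itemize}
\item take $Y=(\nu_1,\nu_2)$ with law $\bmu$, e.g. $Y=g(U_1)$ for a Borel $g$ pushing Lebesgue measure to $\bmu$;
\item take an independent uniform $U$;
\item set $X_i:=F(\nu_i,U)$, with $F$ the measurable randomization map. This is the same Kallenberg lemma the paper itself invokes in Step 3 of the proof of Theorem \ref{thm-coupling}.
\end{itemize}
Then $\cL_{X_i|Y}=\nu_i$ a.s., so $(\cL_{X_1|Y},\cL_{X_2|Y})=Y$ a.s. and its law is $\bmu$.

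The paper's route is a two-line reduction once Theorem \ref{thm-support} is available, and it yields a real-valued $Y$. Yours is self-contained and needs no truncation or compactification. The same argument with a single component proves Theorem \ref{thm-support} itself, which would make the long dyadic approximation in Appendix \ref{appendixB} unnecessary. If $Y$ must be real-valued, compose it with a Borel isomorphism of $\cP(\dbR^d)\times\cP(\dbR^d)$ onto a Borel subset of $\dbR$; only $\si(Y)$ matters.

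One correction: your dismissal of the product encoding is wrong, and that encoding is exactly what the paper uses. Since $\hat\bmu$ is concentrated on product measures, $\cL_{(X_1,X_2)|Y}$ is a.s. a product anyway. More to the point, the marginals of $\cL_{(X_1,X_2)|Y}$ are always $\cL_{X_1|Y}$ and $\cL_{X_2|Y}$. Hence $\cL_{(\cL_{X_1|Y},\cL_{X_2|Y})}$ is the image of $\hat\bmu$ under the continuous marginal map, and since the marginal map inverts $(\mu_1,\mu_2)\mapsto\mu_1\otimes\mu_2$, that image is $\bmu$. Nothing has to be recovered beyond taking marginals. It is rather your mixture encoding that does not directly produce $X_1,X_2$ with the right conditional laws on the whole space, which is why you had to add the quantile step and then abandon it.
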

\proof For any $\mu_1, \mu_2\in \cP(\dbR^d)$, introduce the independent decomposition: $(\mu_1\otimes\mu_2)(A_1\times A_2) := \mu_1(A_1)\mu_2(A_2)$, $\forall A_1, A_2\in \cB(\dbR^d)$. Clearly the mapping $(\mu_1, \mu_2)\to \mu_1\otimes\mu_2$ is one to one. Its image $\cK:= \{\mu_1\otimes\mu_2: \mu_1, \mu_2\in \cP(\dbR^d)\times \cP(\dbR^d)\}\subset  \cP(\dbR^{2d})$ is  Borel measurable (cf. \cite[Theorem 15.1]{Kechris1995}). Then, for any $\bmu\in \cP(\cP(\dbR^d)\times\cP(\dbR^d) )$, we may lift it to $\cP(\cP(\dbR^{2d}))$: 
\bea
\label{bmulift}
\hat \bmu(A) := \bmu\big(A \cap \cK\big),\q\forall A\in \cB(\cP(\dbR^{2d})).
\eea
By Theorem \ref{thm-support}, there exist $X=(X_1, X_2), Y \in \dbL^0(\cF)$ such that $\hat\bmu = \cL_{\cL_{(X_1, X_2)|Y}}$. Now by \eqref{bmulift}, one can easily verify that $\bmu = \cL_{\left(\cL_{X_1|Y},\cL_{X_2|Y}\right) }$.
\qed

\section{A continuous time information control problem}
\label{sect-model}
Fix a finite time duration $[0, T]$. Consider the canonical setting:  $\O := C([0, T]; \dbR^d)$,  $\bar B(\o) := \o$, and $\dbF := \dbF^{\bar B}$. Let $B_t := \bar B_t - \bar B_0$, and $\dbP$ the Wiener measure so that $B$ is a $\dbP$-Brownian motion. Note that $\dbF =\{\cF_t\}_{0\le t\le T} := \cF_0\vee \dbF^B$, and we shall always consider its augmented version. We also assume  the $\si$-algebra $\cF_0$ is atomless under $\dbP$, as required in Theorem \ref{thm-support}. Given $t\in [0, T]$, introduce the shifted objects: $B^t_s:= B_s-B_t$, $\cF^t_s:= \si(B^t_r, t\le r\le s)$, and $\dbF^t := \{\cF^t_s\}_{t\le s\le T}$. 
The player's control is a sub-filtration $\dbG = \{\cG_t\}_{0\le t\le T}\subset \dbF$. In this paper, we shall restrict $\dbG$ to satisfy  the following (H*)-hypothesis:
\begin{itemize}
\item[]{\bf (H*)-hypothesis:} For any $0\le t<s\le T$, $\cG_s\subset\cG_t\vee \cF_s^t$, a.s., namely $\cG_s\subset\cG_t\vee \cF_s^t\vee \cN$.
\end{itemize}

 \noindent In our context, this means that, given $\cG_t$,  at any future time $s>t$, the player can only acquire additional information from $\cF^t_s$; apart from $\cG_t$, the information contained in $\cF_t$ is no longer available. This restriction makes our analyses a lot easier, especially for the law invariance property in Theorem \ref{thm-conditional-law-invariant} and the It\^{o} formula in Theorem \ref{thm-Ito} below, and we shall investigate the general case in future research. A typical example satisfying the (H*)-hypothesis is as follows. \begin{example}
 \label{eg-Hhypothesis}
Let $\cX_0 \in \dbL^2(\cF_0)$, then $\mathbb G:=\mathbb F^{\cX}$ satisfies the (H*)-hypothesis:
$$
\cX_t= \cX_0+\int_0^t \a(s, \cX_{[0,s]})ds+\int_0^t\b(s, \cX_{[0,s]})dB_s,
$$
where $\a, \b$ are either uniformly Lipschitz continuous in $\cX$ under the uniform norm, or piecewise constant in the sense that $\a(t,  \cX_{[0,t]})= \sum_{i=0}^{n-1} \a(t_i,  \cX_{[0,t_i]})1_{[t_i, t_{i+1})}(t)$ for some partition $0=t_0<\cdots<t_n=T$, and similarly for $\b$.
\end{example}

As the name suggests, the (H*)-hypothesis is closely related to the (H)-hypothesis, which is widely used in the literature  (cf. \cite{BY, DM, EJY, Kusuoka}):
\begin{itemize}
\item[] {\bf (H)-hypothesis:} Any square integrable $\dbG$-martingale is an $\dbF$-martingale. Equivalently, for any $t$, $\cF_t$ and $\cG_T$ are conditionally independent, conditional on $\cG_t$.
\end{itemize}

\smallskip

 \begin{prop}
 \label{prop-H}
A sub-filtration $\dbG\subset \dbF$ satisfies the (H*)-hypothesis if and only if, 
\bea
\label{H*}
\mbox{for any $t<s$, $\cG_s \vee \cF^t_s$ and $\cF_t$ are conditionally independent, conditional on $\cG_t$.}
\eea
 In particular, (H*)-hypothesis implies (H)-hypothesis.
 \end{prop}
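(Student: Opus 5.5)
The plan is to prove the two directions of the equivalence separately, and then deduce the (H)-hypothesis from \eqref{H*}. Throughout, "conditionally independent given $\cG_t$" will be checked in the standard form: for bounded $\xi\in \cG_s\vee\cF^t_s$ and $\eta\in\cF_t$,
\[
\dbE[\xi\eta|\cG_t]=\dbE[\xi|\cG_t]\,\dbE[\eta|\cG_t].
\]
Equivalently, we can check that $\dbE[\eta|\cG_s\vee\cF^t_s]=\dbE[\eta|\cG_t]$ for all bounded $\eta\in\cF_t$. The basic structural fact I will use repeatedly is that $\cF^t_s$ is independent of $\cF_t$, and hence of $\cG_t\subset\cF_t$. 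Since $\cG_t\vee\cF_t=\cF_t$, this independence lets us compute conditional expectations given $\cG_t\vee\cF^t_s$.

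For \emph{(H*) $\Rightarrow$ \eqref{H*}}: under (H*) we have $\cG_s\vee\cF^t_s\subset\cG_t\vee\cF^t_s$, a.s., and the reverse inclusion is trivial since $\cG_t\subset\cG_s$. So it suffices to show that $\cG_t\vee\cF^t_s$ and $\cF_t$ are conditionally independent given $\cG_t$. By a monotone class argument it is enough to take $\xi=\a\b$ with $\a\in\cG_t$ and $\b\in\cF^t_s$ bounded. Since $\b$ is independent of $\cF_t$,
\[
\dbE[\a\b\eta|\cG_t]=\a\,\dbE\big[\eta\,\dbE[\b|\cF_t]\big|\cG_t\big]=\a\,\dbE[\b]\,\dbE[\eta|\cG_t].
\]
Similarly $\dbE[\a\b|\cG_t]=\a\,\dbE[\b]$, which gives the product formula.

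For \emph{\eqref{H*} $\Rightarrow$ (H*)}: take $A\in\cG_s$ and set $\cH:=\cG_t\vee\cF^t_s$. The goal is to show $1_A=\dbE[1_A|\cH]$, a.s., which yields $A\in\cH\vee\cN$. The key step is to identify $\cF_s=\cF_t\vee\cF^t_s$ and then use \eqref{H*}, in the equivalent form $\dbE[\eta|\cG_s\vee\cF^t_s]=\dbE[\eta|\cG_t]$ for $\eta\in\cF_t$, to show that $\dbE[1_A|\cF_t\vee\cF^t_s]=\dbE[1_A|\cG_t\vee\cF^t_s]$. Concretely:
\begin{itemize}
\item For a product $\eta\b$ with $\eta\in\cF_t$ and $\b\in\cF^t_s$, conditioning on $\cG_s\vee\cF^t_s$ (which contains both $A$ and $\b$) gives
\[
\dbE[1_A\eta\b]=\dbE\big[1_A\b\,\dbE[\eta|\cG_t]\big].
\]
\item We then need to check that $\dbE[\dbE[1_A|\cH]\eta\b]$ equals the same quantity. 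Computing $\dbE[\eta\b|\cH]$ by independence of $\b$ from $\cF_t$ gives $\dbE[\eta\b|\cH]=\b\,\dbE[\eta|\cG_t]$.
\end{itemize}
Both sides therefore agree on a $\pi$-system generating $\cF_s$. Since $1_A$ is $\cF_s$-measurable, this yields $1_A=\dbE[1_A|\cH]$.

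I expect this converse to be the main obstacle. The delicate point is justifying $\dbE[\eta\b|\cH]=\b\,\dbE[\eta|\cG_t]$. This requires showing that $(\eta,\cG_t)$ is independent of $\cF^t_s$ jointly, so that conditioning on $\cG_t\vee\cF^t_s$ splits correctly; I would verify it directly on products $\a\b'$ with $\a\in\cG_t$ and $\b'\in\cF^t_s$. Null-set bookkeeping is handled by working modulo $\cN$ throughout.

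Finally, for \emph{(H)}: given $t$, apply \eqref{H*} with $s=T$. This shows that $\cG_T\vee\cF^t_T\supset\cG_T$ is conditionally independent of $\cF_t$ given $\cG_t$, which is exactly the stated equivalent form of the (H)-hypothesis.
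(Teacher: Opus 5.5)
Your proof is correct, but the converse direction takes a different route from the paper's.

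The paper proves the converse by disintegration. It takes regular conditional probabilities $\dbP^\o$ given $\cG_t$, so that \eqref{H*} becomes plain independence of $\cG_s\vee\cF^t_s$ and $\cF_t$ under $\dbP^\o$. It then applies the unconditional Lemma~\ref{lem-H}: if $\cG\subset\cF_1\vee\cF_2$ and $\cG\vee\cF_1$ is independent of $\cF_2$, then $\cG\subset\cF_1$ a.s. This is used with $\cF_1=\cG_t\vee\cF^t_s$ and $\cF_2=\cF_t$, and the paper finally passes from $\dbP^\o$-a.s. back to $\dbP$-a.s.

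You run the same orthogonality argument directly with conditional expectations. You rewrite \eqref{H*} as $\dbE[\eta|\cG_s\vee\cF^t_s]=\dbE[\eta|\cG_t]$ for bounded $\eta\in\cF_t$, which uses $\cG_t\subset\cG_s$. You then test $1_A-\dbE[1_A|\cG_t\vee\cF^t_s]$ against the $\pi$-system of products $\eta\b$ that generates $\cF_s=\cF_t\vee\cF^t_s$.

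The paper's version is more modular, since it isolates a clean unconditional lemma. Yours needs no regular conditional probabilities. It therefore avoids the bookkeeping the disintegration requires, which the paper treats only briefly: one common exceptional null set for all events, and gluing $\dbP^\o$-a.s. inclusions into a $\dbP$-a.s. one. You also write out the forward direction and the deduction of the (H)-hypothesis via $s=T$, both of which the paper leaves implicit.

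One small remark: the step you flag as the main obstacle, $\dbE[\eta\b|\cG_t\vee\cF^t_s]=\b\,\dbE[\eta|\cG_t]$, is routine. It follows because $\si(\eta)\vee\cG_t\subset\cF_t$, and $\cF_t$ is independent of $\cF^t_s$.
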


We postpone the proof to Appendix. 
Note that the (H)-hypothesis implies that both $\cG_T$ and $\cF^t_T$ are conditionally independent  with  $\cF_t$, conditionally on $\cG_t$. However, pairwise independence does not imply the joint independence. In general, this does not imply the conditional independence between $\cG_T\vee \cF^t_T$ and $\cF_t$. Indeed, in general (H)-hypothesis does not imply (H*)-hypothesis, as the following simple example in the discrete setting shows. 
\begin{example}
Let $X_1$ and $X_2$ be independent Bernoulli random variables with $\dbP(X_i=1)=\dbP(X_i=0)=\frac{1}{2}$. Set $\dbF:=\{\cF_1,\cF_2\}$ with $\cF_1:=\si(X_1)$, $\cF_2:=\si(X_1,X_2)$, $\cF^1_2 := \si(X_2)$, and $\dbG:=\{\cG_1,\cG_2\} \subset \dbF$ with $\cG_1:=\{\emptyset, \O\}$, $\cG_2:=\si(\{X_1=X_2\})$. Then, one can easily verify that $\cG_2$ and $\cF_1$ are independent, and thus $\dbG\subset\dbF$ satisfies the (H)-hypothesis. However,  note that $\cG_2 \vee \cF^1_2 = \si(X_1, X_2)$ is not a subset of $\cG_1 \vee \cF^1_2 = \si(X_2)$, and in fact $\cG_2 \vee \cF^1_2 $ is not independent of $\cF_1$, so (H*)-hypothesis fails.
\end{example}

We now introduce our control problem.  For each $t\in [0, T]$, denote by $\cA_t$ the set of all sub-filtrations $\dbG\subset \dbF$ on $[t, T]$ satisfying the (H*)-hypothesis. That is, $\dbG = \{\cG_s\}_{s\in [t, T]}$ such that $\cG_s \subset \cF_s$, $s\in [t, T]$, and $\cG_{s_2} \subset \cG_{s_1}\vee \cF^{s_1}_{s_2}$, a.s., $t\le s_1 < s_2 \le T$.  For any $t\in [0, T]$, $\cG^0_t \subset \cF_t$,  set 
\bea
\label{cA}
\cA(t, \cG^0_t) := \{\dbG\in \cA_t: \cG_t = \cG^0_t\}.
\eea
Note that, for any  $\cG^0_t \subset \cF_t$, $\cA(t, \cG^0_t)$ contains the following trivial element and  thus is non-empty:   $\cG_s \equiv \cG^0_t$, $s\in [t, T]$.
The state process is modeled as follows:
\bea
\label{X}
dX_t = b(t, X_t) dt + \si(t, X_t) dB_t.
\eea
Since $\si$ is allowed to be degenerate, we assume without loss of generality that $X$ is also $d$-dimensional. We remark that in this paper the dynamics of $X$ is fixed and does not involve any control. Motivated by \eqref{insider-general}, for any $t\in [0, T]$, $\xi\in \dbL^2(\cF_t)$, and $\cG^0_t \subset \cF_t$,  our information control problem takes the form:
\bea
\label{VtG}
V_0(t, \ul\xi, \cG^0_t) := \sup_{\dbG \in \cA(t,\cG^0_t)} J(t, \ul\xi, \dbG),~\mbox{where}~ J(t, \ul\xi, \dbG):= G(\cL_{\cL_{X^{t,\xi}_T|\cG_T}}) + \int_t^T F(s,\cL_{\cL_{X^{t,\xi}_s|\cG_s}})ds,
\eea 
where $F: [0, T]\times \cP_2(\cP_2(\dbR^d))\to \dbR$, $G:  \cP_2(\cP_2(\dbR^d))\to \dbR$, and $X^{t,\xi}$ denotes the solution to SDE \eqref{X} on $[t, T]$ with initial condition $X_t=\xi$. Here the functions $J$ and $V_0$ depend on the whole random variable $\xi$ (or say, the mapping $\xi: \O\to\dbR$), and throughout the paper we use the notation $\ul\xi$ to emphasize such a dependence.  In particular, $J(t, \ul \xi, \dbG), V_0(t, \ul \xi, \cG^0_t)\in \dbR$ are deterministic values.

Throughout the paper, the following standing assumptions are in force.
\begin{assumption}
\label{assum-standing}
(i) $b, \si$ are continuous in $t$ and uniformly Lipschitz continuous in $x$;\\
(ii) $F$ is continuous in $t$, and $F, G$ are locally uniformly continuous\footnote{In the finite dimensional case, the corresponding $\cK_R:= \{x\in \dbR^n: |x|\le R\}$ is compact, and thus local uniform continuity is equivalent to pointwise continuity. For the infinite dimensional space here, $\cK_R$ is not compact, and thus local uniform continuity is stronger than pointwise continuity.}  in $\bmu$ under $\cW_2$ in the sense that they are uniformly continuous  in the set $\cK_{R}:=\{\bmu\in \cP_2(\cP_2(\dbR^d)): \|\bmu\|_2 \le R\}$ for $\forall R>0$.
\end{assumption}
Under these conditions, for any initial condition $(t, \xi)$,  \eqref{X} is well-posed with
\bea
\label{Xest}
\sup_{t\le s\le T} \|\cL_{\cL_{X^{t,\xi}_s|\cG_s}}\|_2 \le \Big(\dbE\big[ \sup_{t\le s\le T} |X^{t,\xi}_s|^2\big]\Big)^{1\over 2} \le C[1+\|\xi\|_2] =: R_0(\xi).
\eea
Then $F, G$ are uniformly continuous in $\cK_{R_0(\xi)}$. Note that uniform continuity implies linear growth and hence boundedness in $\cK_{R_0(\xi)}$, then $J$ and $V_0$ are well defined.

We first prove the dynamic programming principle for $V_0$, which is technically easy  since $J(t, \ul\xi,  \dbG)$ involves only deterministic laws and there is no subtle  measurable selection issue. 
\begin{prop}[DPP]
\label{prop-DPP}
For any $0\le t<t+\d\le T$, $\xi\in \dbL^2(\cF_t)$,  and $\cG^0_t\subset \cF_t$, it holds that
\bea
\label{DPP}
V_0(t, \ul\xi, \cG^0_t) = \sup_{\dbG \in \cA(t,\cG^0_t)} \Big[V_0(t+\d, \ul {X^{t,\xi}_{t+\d}}, \cG_{t+\d}) + \int_t^{t+\d} F(s,\cL_{\cL_{X^{t,\xi}_s|\cG_s}})ds\Big].
\eea 
\end{prop}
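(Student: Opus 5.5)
We prove \eqref{DPP} by the standard two inequalities, using only the flow property of SDE \eqref{X}, the deterministic nature of $J$, and the fact that admissible filtrations can be restricted and concatenated without leaving the class defined by the (H*)-hypothesis. Denote the right side of \eqref{DPP} by $\tilde V$. Two preliminary facts are needed. First, by strong uniqueness of \eqref{X} under Assumption \ref{assum-standing}(i), $X^{t,\xi}_s = X^{t+\d, X^{t,\xi}_{t+\d}}_s$ for $s\in[t+\d,T]$, a.s. Second, admissibility is stable under the two operations below, which is checked directly from the definition of $\cA_t$.

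For the inequality $V_0(t,\ul\xi,\cG^0_t)\le \tilde V$, fix $\dbG\in\cA(t,\cG^0_t)$. Its restriction $\dbG|_{[t+\d,T]}$ lies in $\cA(t+\d,\cG_{t+\d})$, since the (H*) inclusions for $t+\d\le s_1<s_2$ are among those already assumed. By the flow property, the integrand and terminal term of $J(t,\ul\xi,\dbG)$ on $[t+\d,T]$ coincide with those of $J(t+\d,\ul{X^{t,\xi}_{t+\d}},\dbG|_{[t+\d,T]})$. Hence
\[
J(t,\ul\xi,\dbG)=\int_t^{t+\d}F(s,\cL_{\cL_{X^{t,\xi}_s|\cG_s}})ds + J(t+\d,\ul{X^{t,\xi}_{t+\d}},\dbG|_{[t+\d,T]}),
\]
and the last term is at most $V_0(t+\d,\ul{X^{t,\xi}_{t+\d}},\cG_{t+\d})$. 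Taking the supremum over $\dbG$ gives the inequality.

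For the reverse inequality, fix $\dbG\in\cA(t,\cG^0_t)$ and $\e>0$. Because $V_0(t+\d,\cdot,\cG_{t+\d})$ is a deterministic supremum of deterministic numbers, we may choose a single $\dbG'\in\cA(t+\d,\cG_{t+\d})$ that is $\e$-optimal; no measurable selection is required. Define the concatenation $\hat\cG_s:=\cG_s$ for $s\in[t,t+\d]$ and $\hat\cG_s:=\cG'_s$ for $s\in[t+\d,T]$, which is consistent at $s=t+\d$. We then check that $\hat\dbG\in\cA(t,\cG^0_t)$. The inclusions $\hat\cG_s\subset\cF_s$ are clear. For (H*), the cases $s_1<s_2\le t+\d$ and $t+\d\le s_1<s_2$ are inherited. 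In the mixed case $s_1<t+\d<s_2$ we chain the two inclusions:
\[
\hat\cG_{s_2}\subset \cG_{t+\d}\vee\cF^{t+\d}_{s_2}\subset \cG_{s_1}\vee\cF^{s_1}_{t+\d}\vee\cF^{t+\d}_{s_2}=\cG_{s_1}\vee\cF^{s_1}_{s_2},
\]
all a.s. The last equality holds because $B^{s_1}_r=B^{s_1}_{t+\d}+B^{t+\d}_r$. Applying the flow identity once more gives $V_0(t,\ul\xi,\cG^0_t)\ge J(t,\ul\xi,\hat\dbG)\ge \int_t^{t+\d}F\,ds+V_0(t+\d,\ul{X^{t,\xi}_{t+\d}},\cG_{t+\d})-\e$. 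Taking the supremum over $\dbG$ and letting $\e\to0$ completes the proof.

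The only step needing any care is verifying that the concatenation is admissible in the mixed case, specifically the null-set bookkeeping in "$\subset$ a.s.". Joining with $\cN$ throughout handles this. Finiteness of all quantities follows from \eqref{Xest} and the boundedness of $F,G$ on $\cK_{R_0(\xi)}$.
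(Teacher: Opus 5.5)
Your proposal is correct and follows the paper's proof. You use the same two inequalities: restrict $\dbG$ to $[t+\d,T]$, pick an $\e$-optimal $\dbG'\in\cA(t+\d,\cG_{t+\d})$, concatenate, and rely on the flow property of \eqref{X}. Your explicit check of the (H*)-hypothesis in the mixed case $s_1<t+\d<s_2$, via $\cF^{s_1}_{t+\d}\vee\cF^{t+\d}_{s_2}=\cF^{s_1}_{s_2}$, fills in the step the paper leaves as ``one can easily check''.
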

\proof Let $\tilde V_0(t, \ul\xi, \cG^0_t)$ denote the right side of \eqref{DPP}. We first prove $V_0(t, \ul\xi, \cG^0_t)\le \tilde V_0(t, \ul\xi, \cG^0_t)$. For any $\dbG\in \cA(t, \cG^0_t)$, it is clear that $X^{t,\xi}_s = X^{t+\d, X^{t,\xi}_{t+\d}}_s$, $s\ge t+\d$, and $\dbG \in \cA(t+\d, \cG_{t+\d})$. Then
\beaa
J(t, \ul \xi, \dbG) 
&=& \Big[G(\cL_{\cL_{X^{t,\xi}_T|\cG_T}}) + \int_{t+\d}^T F(s,\cL_{\cL_{X^{t,\xi}_s|\cG_s}})ds\Big] + \int_t^{t+\d} F(s,\cL_{\cL_{X^{t,\xi}_s|\cG_s}})ds\\
&\le& V_0(t+\d,  \ul {X^{t,\xi}_{t+\d}}, \cG_{t+\d}) + \int_t^{t+\d} F(s,\cL_{\cL_{X^{t,\xi}_s|\cG_s}})ds \le \tilde V_0(t, \ul \xi, \cG^0_t).
\eeaa
By the arbitrariness of $\dbG\in \cA(t, \cG^0_t)$, it follows from \eqref{VtG} that $V_0(t, \ul \xi, \cG^0_t) \le \tilde V_0(t, \ul \xi, \cG^0_t)$.

We next prove $\tilde V_0(t, \ul \xi, \cG^0_t) \le V_0(t, \ul \xi, \cG^0_t)$. For any $\dbG\in \cA(t, \cG^0_t)$ and any $\e>0$, by \eqref{VtG} for $V_0(t+\d, \ul {X^{t,\xi}_{t+\d}}, \cG_{t+\d})$ there exists $\dbG^\e\in \cA(t+\d, \cG_{t+\d})$ such that 
\beaa
V_0(t+\d, \ul {X^{t,\xi}_{t+\d}}, \cG_{t+\d}) \le G(\cL_{\cL_{X^{t,\xi}_T|\cG^\e_T}}) + \int_{t+\d}^T F(s,\cL_{\cL_{X^{t,\xi}_s|\cG^\e_s}})ds+\e.
\eeaa
Denote $\bar \cG^\e_s:= \cG_s 1_{[t, t+\d)}(s) + \cG^\e_s1_{[t+\d, T]}(s)$. Since $\cG^\e_{t+\d} = \cG_{t+\d}$, one can easily check that $\bar\dbG^\e\in \cA(t, \cG^0_t)$. Then 
\beaa
&&V_0(t+\d, \ul {X^{t,\xi}_{t+\d}},  \cG_{t+\d}) + \int_t^{t+\d} F(s,\cL_{\cL_{X^{t,\xi}_s|\cG_s}})ds\\
&&\le G(\cL_{\cL_{X^{t,\xi}_T|\cG^\e_T}}) + \int_{t+\d}^T F(s,\cL_{\cL_{X^{t,\xi}_s|\cG^\e_s}})ds+\e + \int_t^{t+\d} F(s,\cL_{\cL_{X^{t,\xi}_s|\cG_s}})ds\\
&&= G(\cL_{\cL_{X^{t,\xi}_T|\bar\cG^\e_T}}) + \int_{t}^T F(s,\cL_{\cL_{X^{t,\xi}_s|\bar\cG^\e_s}})ds+\e \le V_0(t, \ul\xi, \cG^0_t)+\e.
\eeaa
Since $\dbG\in \cA(t, \cG^0_t)$ is arbitrary, we have $\tilde V_0(t, \ul\xi, \cG^0_t) \le  V_0(t, \ul\xi,  \cG^0_t)+\e$. Moreover, since $\e>0$ is arbitrary, we obtain $\tilde V_0(t, \ul\xi, \cG^0_t) \le  V_0(t, \ul\xi,  \cG^0_t)$, and thus complete the proof.
\qed

\smallskip
We remark that the above DPP actually does not require the (H*)-hypothesis. We next establish the crucial law invariance property of $V$, which relies on the (H*)-hypothesis. For this purpose, we first need a lemma, whose proof is postponed to Appendix \ref{appendixB}.

\begin{lemma}
\label{lem-invariance}
Let $0\le t=t_0<t_1<\cdots<t_n\le T$, $\xi, \xi'\in \dbL^2(\cF_{t})$, and $\cG^0_t, \cG^{'0}_t\subset \cF_t$ with $\cL_{\cL_{\xi|\cG_{t}}} = \cL_{\cL_{\xi'|\cG^{'0}_{t}}}$. Then, for any $\dbG\in \cA(t, \cG^0_t)$ and $\e>0$, there exists $\dbG^{'\e}\in \cA(t, \cG^{'0}_t)$ such that
\bea
\label{invariance}
\cW_1\Big(\cL_{\cL_{(\xi', B^t_{t_1}, \cdots, B^t_{t_i})|\cG^{'\e}_{t_i}}}, \cL_{\cL_{(\xi, B^t_{t_1}, \cdots, B^t_{t_i})|\cG_{t_i}}}\Big)\le \e, \q i=1,\cdots, n.
\eea
\end{lemma}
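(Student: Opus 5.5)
I would construct $\dbG^{'\e}$ by transporting the structure of $\dbG$ onto the primed side through an (approximate) measure-preserving correspondence, built step by step along $t_1<\cdots<t_n$. The hypothesis $\cL_{\cL_{\xi|\cG^0_t}} = \cL_{\cL_{\xi'|\cG^{'0}_t}}$ should be read with $\cG_t=\cG^0_t$. The key structural fact comes from (H*) together with Proposition \ref{prop-H}. For $s\in[t_{i-1},t_i]$ we have $\cG_{t_i}\subset \cG_{t_{i-1}}\vee \cF^{t_{i-1}}_{t_i}$, and $\cF^{t_{i-1}}_{t_i}$ is independent of $\cF_{t_{i-1}}$. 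Hence the joint conditional law of $(\xi, B^t_{t_1},\dots,B^t_{t_i})$ given $\cG_{t_i}$ is determined by three ingredients. The first is the conditional law given $\cG_{t_{i-1}}$ of $(\xi,B^t_{t_1},\dots,B^t_{t_{i-1}})$. The second is the Brownian increment path on $[t_{i-1},t_i]$. The third is the rule $\Phi_i$ selecting $\cG_{t_i}$ out of $\cG_{t_{i-1}}\vee \cF^{t_{i-1}}_{t_i}$.

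Step 1 is a finite reduction. By the separability of $\cP_2(\cP_2(\dbR^d))$ and the martingale convergence theorem, I would approximate $\cG_{t_i}$ by $\si$-algebras generated by finitely many events. This should change each conditional law in \eqref{invariance} by at most $\e/(2n)$ in $\cW_1$. In particular, I would replace $\cG^0_t$ by $\si(Y)$ for a discrete $Y$, or more robustly represent $\cG^0_t$-measurable quantities as functions of the conditional law $\cL_{\xi|\cG^0_t}$ itself. Each subsequent $\cG_{t_i}$ is then generated, up to error, by finitely many random variables of the form $\phi_i(Z_{i-1}, B^{t_{i-1}}_{r_1},\dots,B^{t_{i-1}}_{r_m})$. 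Here $Z_{i-1}$ is a finite generator of the approximated $\cG_{t_{i-1}}$, and the $r_j\in[t_{i-1},t_i]$.

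Step 2 is the transfer at time $t$. I would use that the law of the conditional law agrees on both sides. I would therefore code $\cG^0_t$ and $\cG^{'0}_t$ via the random measures $\nu:=\cL_{\xi|\cG^0_t}$ and $\nu':=\cL_{\xi'|\cG^{'0}_t}$, which have the same law $\bmu$. In general $\cG^0_t$ is strictly larger than $\si(\nu)$. It still suffices to replace it, up to $\e$, by a generator $Z_0$ with $\cL_{(Z_0,\xi)}$ reproducible on the primed side. Since $\cF_t$ is atomless and contains $\cG^{'0}_t$ (which I may need to assume atomless, or handle with an extra uniform variable from $\cF^t$-independent randomness), I would construct a $\cG^{'0}_t$-measurable $Z_0'$ such that $(Z_0',\cL_{\xi'|\cG^{'0}_t})$ has the law of $(Z_0,\nu)$. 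This is a disintegration/measurable selection argument, in the spirit of Theorem \ref{thm-support} and Corollary \ref{cor-multidim-support}.

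Step 3 is the inductive definition. I would set $\cG^{'\e}_s := \cG^{'0}_t$ for $s\in[t,t_1)$. At each $t_i$, I would set $\cG^{'\e}_{t_i}:=\si\big(\phi_i(Z'_{i-1},B^{t_{i-1}}_{r_1},\dots)\big)$, and hold this constant on $[t_i,t_{i+1})$. The (H*)-hypothesis then holds by construction, since each new $\si$-algebra lies in the previous one joined with the Brownian increments. Independence of the increments from $\cF_{t_{i-1}}$ gives the identity $\cL_{(Z'_i,\cL_{(\xi',B^t_{t_1..t_i})|Z'_i})} = \cL_{(Z_i,\cL_{(\xi,B^t_{t_1..t_i})|Z_i})}$ at each step. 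Hence the laws of the conditional laws coincide exactly for the approximating filtrations, and \eqref{invariance} follows from the triangle inequality with the Step 1 errors. Note that the original $\dbG$ itself only needed approximating in the comparison; I would keep $\cG_{t_i}$ on the right side and absorb the error there.

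The main obstacle is Step 2 together with the reconciliation in Step 1. $\cG^0_t$ carries information beyond $\nu$, namely components independent of $\xi$ given $\nu$ that later interact with $\phi_i$. Reproducing the joint law of $(Z_0,\nu)$ inside the given $\cG^{'0}_t$ requires $\cG^{'0}_t$ to carry enough extra randomness. I would try first to show that, because the conditional law of $(\xi,\text{future } B)$ given $\cG_{t_i}$ only depends on $Z_0$ through $\nu$, plus noise independent of $\xi$, one can replace that noise by functions of $\xi'$ or Brownian increments. Such a replacement would cost only $\e$ in $\cW_1$, which is presumably why the statement is approximate rather than exact.
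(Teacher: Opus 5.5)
You correctly identify Step 2 as the crux, but as formulated it cannot be carried out, and the idea needed to get around it is missing. A $\cG^{'0}_t$-measurable $Z'_0$ with $\cL_{(Z'_0,\nu')}=\cL_{(Z_0,\nu)}$ need not exist. Here is an example:
\begin{itemize}
\item take $\cG^{'0}_t$ trivial and $\cL_{\xi'}=\cL_\xi$;
\item take $\cG^0_t=\si(Y)$, with $Y\in\cF_t$ Bernoulli and independent of $\xi$;
\item take $\cG_s=\si(Y)\vee\si(1_{\{Y=1\}}B^t_r,\ t\le r\le s)$.
\end{itemize}
The hypothesis holds, since both laws equal $\d_{\cL_\xi}$. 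Moreover $\dbG$ satisfies (H*), and $\cL_{B^t_{t_1}|\cG_{t_1}}$ is a Dirac mass or a Gaussian according to $Y$. Yet nothing in $\cG^{'0}_t$ can play the role of $Y$. Assuming $\cG^{'0}_t$ atomless does not help either: if $\cG^{'0}_t=\si(\nu')$, every $\cG^{'0}_t$-measurable variable is a function of $\nu'$.

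The missing randomness cannot come from $\xi'$ or any other part of $\cF_t$, because $\cG^{'\e}_t=\cG^{'0}_t$ is prescribed and (H*) forces $\cG^{'\e}_s\subset\cG^{'0}_t\vee\cF^t_s$. The only admissible source is the Brownian increments after $t$. These are correlated with $B^t_{t_1},\dots,B^t_{t_n}$, so the exact identity you assert in Step 3 is not available.

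The paper resolves this by sacrificing a short window. With $s_m=t+\frac1m$, it sets $\cG'_s=\cG^{'0}_t\vee\cF^{B^t}_s$ on $[t,s_m]$. It uses these increments to split $\{\nu'\in O_i\}$ into sets $E'_{ij}\in\cG'_{s_m}$ with $\dbP(E'_{ij})=\dbP(E_{ij})$. Here $E_{ij}$ records, up to $2^{-m}$, both $\nu$ and the $\cG_{s_m}$-measurable type $\cL_{\cL_{\vec B^{s_m}_{1:n}|\cG_{t_n}}|\cG_{s_m}}$. It then compares conditional laws of $\vec B^{s_m}_{1:n}$ instead of $\vec B^t_{1:n}$, at a cost $O(n/\sqrt m)$.

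The extra information in $\cG^0_t$ is never reproduced as such. By (H*), $\cL_{(\xi,\vec B^t_{1:i})|\cG_{t_i}}=\cL_{\xi|\cG^0_t}\otimes\cL_{\vec B^t_{1:i}|\cG_{t_i}}$. So only the joint law of the pair $(\cL_{\xi|\cG^0_t},\cL_{\vec B^t_{1:n}|\cG_{t_n}})$ has to be matched.

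Second, the filtration of Step 3 is not in $\cA(t,\cG^{'0}_t)$. (H*) must hold for all $s_1<s_2$, not only between grid points. Suppose $\cG^{'\e}$ is constant on $[t_{i-1},t_i)$ and $\cG^{'\e}_{t_i}$ contains, say, $\si(B_r-B_{t_{i-1}})$ with $r<t_i$. For $s_1\in(r,t_i)$ this variable is independent of $\cG^{'\e}_{s_1}\vee\cF^{s_1}_{t_i}$, so the inclusion $\cG^{'\e}_{t_i}\subset\cG^{'\e}_{s_1}\vee\cF^{s_1}_{t_i}$ fails. In fact, letting $s_1\uparrow t_i$ and using Blumenthal's 0--1 law for the reversed increments, an (H*)-filtration that is constant on $[t_{i-1},t_i)$ cannot grow at $t_i$ at all. (H*) forces increment information to be absorbed as it arrives. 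Approximating $\cG_{t_i}$ by finitely generated $\si$-algebras only at the grid times therefore cannot produce an admissible control.

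The paper avoids both finite generation and this problem by transplanting $\dbG$ at every time. For $s\ge s_m$ it sets $\cG'_s=\cG'_{s_m}\vee\si(E'_{ij}\oplus A^{i,j}: A\in\cG_s,\ i,j\ge1)$, where $A^{i,j}$ is the section of $A$ at a representative path $\o^{ij}\in E_{ij}$. (H*) is then inherited from $\dbG$ time by time. On $E'_{ij}$, the conditional law of $\cL_{\vec B^{s_m}_{1:n}|\cG'_{t_n}}$ given $\cG'_{s_m}$ is exactly the type at $\o^{ij}$.
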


\begin{thm}[Law invariance]
\label{thm-conditional-law-invariant}
Let $t\in [0, T]$, $\xi, \xi'\in \dbL^2(\cF_t)$, and $\cG^0_t, \cG^{'0}_t\subset \cF_t$. If $\cL_{\cL_{\xi|\cG_{t}}} = \cL_{\cL_{\xi'|\cG^{'0}_{t}}}$, then $V_0(t, \ul\xi, \cG^0_t) = V_0(t, \ul \xi', \cG^{'0}_t)$. Consequently, we may introduce a function $V$ on $[0, T]\times \cP_2(\cP_2(\dbR^d))$ as follows:
\bea
\label{VcU}
V(t, \cL_{\cL_{\xi|\cG^0_t}} ) = V_0(t, \ul\xi, \cG^0_t).
\eea
\end{thm}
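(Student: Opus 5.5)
By symmetry it suffices to show $V_0(t,\ul\xi',\cG^{'0}_t)\ge J(t,\ul\xi,\dbG)-\e'$ for any $\dbG\in\cA(t,\cG^0_t)$ and any $\e'>0$. The plan is to transfer $\dbG$ to a nearly equivalent filtration $\dbG'$ on the primed side using Lemma \ref{lem-invariance}. We then compare the two payoffs through the conditional laws of $X^{t,\xi}_s$ and $X^{t,\xi'}_s$.

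First, discretize the state process. Fix a partition $t=t_0<\cdots<t_n=T$ of mesh $h$ and approximate $X^{t,\xi}$ by an Euler scheme $X^{n}_{t_i}=\Phi_i(\xi,B^t_{t_1},\dots,B^t_{t_i})$ for deterministic maps $\Phi_i$. Because $b,\si$ are Lipschitz, these maps are Lipschitz. The same maps applied to $\xi'$ give $X'^{n}$, and the standard estimate $\dbE[\sup_s|X_s-X^n_s|^2]\le C(1+\|\xi\|_2^2)\,\rho(h)$ holds with a constant depending only on the law of $\xi$. This law coincides with that of $\xi'$, since both are the mean of the common $\bmu$.

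Next, apply Lemma \ref{lem-invariance} with this partition and a tolerance $\e$. This produces $\dbG'^{\e}\in\cA(t,\cG^{'0}_t)$ whose conditional laws of $(\xi',B^t_{t_1},\dots,B^t_{t_i})$ given $\cG'^{\e}_{t_i}$ are $\cW_1$-close to those on the unprimed side. Since $\cL_{\Phi_i(Z)|\cG}=(\Phi_i)_\#\cL_{Z|\cG}$ and pushforward by a Lipschitz map is Lipschitz on $\cP_1$, lifted to $\cP_1(\cP_1)$, we obtain
\[
\cW_1\big(\cL_{\cL_{X'^n_{t_i}|\cG'^\e_{t_i}}},\cL_{\cL_{X^n_{t_i}|\cG_{t_i}}}\big)\le C\e .
\]
On the other hand, conditional Jensen gives $\cW_2(\cL_{\cL_{X_s|\cG_s}},\cL_{\cL_{X^n_s|\cG_s}})\le \|X_s-X^n_s\|_2$, which is small uniformly in the filtration. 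Two further ingredients handle the remaining gaps. For $s\in[t_i,t_{i+1})$, the difference between $\cG_s$ and $\cG_{t_i}$ is controlled through (H*): the new information comes from $\cF^{t_i}_s$, and $X$ moves only $O(\sqrt h)$. For $F$ it may be simpler to note that $s\mapsto\cL_{\cL_{X_s|\cG_s}}$ need not be continuous, but the Riemann sum error can still be bounded using the $\cW_2$ modulus of $X$. Finally, the gap between $\cW_1$ and $\cW_2$ must be bridged, since $F,G$ are only $\cW_2$-uniformly continuous on $\cK_R$. I would do this by truncating, using that uniform square integrability is uniform in the filtration, together with $\cW_2^2\le C_R\cW_1$ on compactly supported pieces plus a tail term.

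Combining these, $|J(t,\ul\xi',\dbG'^\e)-J(t,\ul\xi,\dbG)|\le \omega(\e,h)$, and letting $\e\to0$ and then $h\to0$ completes the argument. The main obstacle is the handling of times $s$ between grid points and, above all, the $\cW_1\to\cW_2$ upgrade. I would try to avoid the first by refining the partition to include all points needed; indeed, applying the lemma on a fine grid and treating $F$ via Riemann sums with continuity in $t$ may suffice. I would handle the upgrade by the uniform-integrability truncation, using that all conditional laws here have second moments controlled by the common law of $\xi$. Once equality holds, \eqref{VcU} is well defined: Theorem \ref{thm-support} ensures that every $\bmu\in\cP_2(\cP_2(\dbR^d))$ is realized by some pair $(\xi,\cG^0_t)$ with $\cF_0$ atomless, and the theorem shows the value does not depend on the choice of realization.
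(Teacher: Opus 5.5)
Your architecture matches the paper's: an Euler scheme written as $\f_i(\xi,B^t_{t_1},\dots,B^t_{t_i})$ with the same maps on both sides, Lemma \ref{lem-invariance} to transfer $\dbG$ to $\dbG^{'\e}$, pushforward to get $\cW_1$-closeness, an upgrade to $\cW_2$, conditional Jensen to pass from the scheme to $X$ uniformly in the filtration, and a Riemann-sum step for $F$. However, one step fails as written. The claim ``because $b,\si$ are Lipschitz, these maps are Lipschitz'' is false once $\si$ depends on $x$. The one-step map $(x,\Delta)\mapsto x+b(t_k,x)h+\si(t_k,x)\Delta$ has Lipschitz constant in $x$ of order $1+L|\Delta|$ and in $\Delta$ of order $|\si(t_k,x)|$, and neither is bounded. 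Iterating over the grid only makes this worse. So the pushforward bound $\cW_1\le C\e$ does not follow. The truncation you propose for the $\cW_1\to\cW_2$ upgrade acts on outputs and laws, so it does not cure this. Composing $\f_i$ with a projection onto a ball still gives an unbounded Lipschitz constant: think of $(x,\Delta)\mapsto x\Delta$ near $\Delta=0$ with $|x|$ large.

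The paper's fix is to truncate the \emph{inputs}. It replaces $\xi$ by $(-n)\vee\xi\wedge n$ and each Brownian increment by $(-R)\vee B^{t_i}_s\wedge R$ inside the scheme, giving $X^{n,R}$ in \eqref{XnR}. Then $\f_i$ is globally Lipschitz with constant $C_{n,R}$ and bounded, so the pushforward gives $\cW_1\le C_{n,R}\e$. Boundedness also gives the $\cW_2$ upgrade for free: $\cW_3\le C_{n,R}$, hence $\cW_2\le C_{n,R}\sqrt{\e}$. This makes your separate uniform-integrability argument unnecessary. The truncation errors are small in $L^2$ and depend only on $\cL_\xi\otimes\cL_{B^t}=\cL_{\xi'}\otimes\cL_{B^t}$. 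They transfer to the conditional laws uniformly in the filtration by conditional Jensen, and limits are taken in the order $\e\to0$, $R\to\infty$, $n\to\infty$.

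For the between-grid step, the precise ingredient is the (H)-consequence $\cL_{X_{t_i}|\cG_{t_i}}=\cL_{X_{t_i}|\cG_s}$. It gives $\cW_2^2(\cL_{\cL_{X_s|\cG_s}},\cL_{\cL_{X_{t_i}|\cG_{t_i}}})\le \dbE[|X_s-X_{t_i}|^2]\le C/n$. In particular, contrary to your remark, $s\mapsto\cL_{\cL_{X_s|\cG_s}}$ is in fact $\tfrac12$-H\"older in $\cW_2$ for every admissible $\dbG$.
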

\noindent{We remark that, by Theorem \ref{thm-support}, $V$ is well defined for all $\bmu\in  \cP_2(\cP_2(\dbR^d))$.}

\proof We shall only prove $V_0(t, \ul\xi, \cG^0_t) \le V_0(t, \ul\xi',  \cG^{'0}_t)$. The opposite inequality can be proved similarly. By \eqref{VtG}, it suffices to show that, for any $\dbG\in \cA(t, \cG^0_t)$, 
\bea
\label{invariant-est1}
J(t, \ul\xi, \dbG) \le V_0(t, \ul\xi', \cG^{'0}_t).
\eea

For any $n\ge 1$, set $t_i := t+ {i\over n}(T-t)$, $i=0,\cdots, n$, and consider the standard Euler scheme for $X^{t,\xi}$ on $[t, T]$, but with truncated initial condition:
\bea
\label{Xn}
X^n_{t_0}:= (-n)\vee \xi \wedge n, \q X^n_s := X^n_{t_i} + b(t_i, X^n_{t_i})(s-t_i) + \si(t_i, X^n_{t_i})B^{t_i}_s, ~s\in (t_i, t_{i+1}].
\eea
 for $i=0,\cdots, n-1$. Moreover, for any $R>0$, consider a further truncation:
\bea
\label{XnR}
X^{n,R}_{t_0}:= X^n_{t_0}, \q X^{n,R}_s := X^{n,R}_{t_i} + b(t_i, X^{n,R}_{t_i})(s-t_i) + \si(t_i, X^{n,R}_{t_i})\big[(-R)\vee B^{t_i}_s \wedge R\big].
\eea
Define $X^{'n}$ and $X^{'n,R}$ similarly corresponding to $\xi'$. By standard SDE theory, it is clear that
\bea
\label{Xnconv}
\left.\ba{c}
\displaystyle \lim_{n\to\infty} [\e_n + \e_n'] =0, \q \lim_{R\to\infty} [\e_{n,R} + \e_{n,R}'] =0,\q\mbox{where}\smallskip\\
\displaystyle |\e_n|^2:= \dbE\Big[\sup_{t\le s\le T}|X^n_s - X^{t,\xi}_s|^2\Big], \q |\e_{n,R}|^2 := \dbE\Big[\sup_{t\le s\le T}|X^{n,R}_s - X^{n}_s|^2\Big], 
\ea\right.
\eea
and $\e_n', \e_{n,R}'$ are defined similarly.

Now for any $\e>0$, let $\dbG^{'\e}$ be as in Lemma \ref{lem-invariance}. Fix $n, R$. For each $i=0,\cdots, n-1$, denote $\bmu_i :=  \cL_{\cL_{(\xi, B^t_{t_1}, \cdots, B^t_{t_i})|\cG_{t_i}}}$, $\bmu^{'\e}_i := \cL_{\cL_{(\xi', B^t_{t_1}, \cdots, B^t_{t_i})|\cG^{'\e}_{t_i}}}$. By Lemma \ref{lem-invariance}, $\cW_1(\bmu_i, \bmu_i^{'\e}) \le \e$.
Note that $X^{n,R}_{t_i} = \f_i(\xi,B^t_{t_1},\cdots, B^t_{t_i})$, for some Lipschitz continuous  function $\f_i$ with a Lipschitz constant $C_{n,R}$. We also have $X^{'n,R}_{t_i}=\f_i(\xi',B^t_{t_1},\cdots, B^t_{t_i})$ for the same function $\f_i$. Now for any Lipschitz continuous function $\Phi: \cP_1(\dbR^d)\to \dbR$ with Lipschitz constant $1$ under $W_1$, denote
\beaa
\hat\Phi_i(\mu) := \Phi(\mu\circ \f_i^{-1}),\q  \mu\in \cP(\dbR^{d(i+1)}). 
\eeaa
It is clear that $\hat \Phi_i: \cP_1(\dbR^{d(i+1)})\to \dbR$ is Lipschitz continuous under $W_1$ with Lipschitz constant $C_{n,R}$.
Then
\beaa
&&\dbE\Big[\Phi(\cL_{X^{n,R}_{t_i}|\cG_{t_i}}) -\Phi(\cL_{X^{'n,R}_{t_i}|\cG^{'\e}_{t_i}})\Big] = \dbE\Big[\hat \Phi_i(\cL_{(\xi, B^t_{t_1}, \cdots, B^t_{t_i})|\cG_{t_i}})- \hat \Phi_i(\cL_{(\xi', B^t_{t_1}, \cdots, B^t_{t_i})|\cG^{'\e}_{t_i}})\Big] \\
&&\le C_{n,R} \cW_1(\bmu_i, \bmu_i^{'\e}) \le C_{n,R}\e.
\eeaa
This implies that $\cW_1(\cL_{\cL_{X^{n,R}_{t_i}|\cG_{t_i}}}, \cL_{\cL_{X^{'n,R}_{t_i}|\cG^{'\e}_{t_i}}}) \le C_{n,R}\e$. 
On the other hand, it is obvious that $\cW_3(\cL_{\cL_{X^{n,R}_{t_i}|\cG_{t_i}}}, \cL_{\cL_{X^{'n,R}_{t_i}|\cG^{'\e}_{t_i}}}) \le C_{n,R}$, for a possibly larger $C_{n,R}$. Then
\bea
\label{Xnconv2}
\cW_2(\cL_{\cL_{X^{n,R}_{t_i}|\cG_{t_i}}}, \cL_{\cL_{X^{'n,R}_{t_i}|\cG^{'\e}_{t_i}}}) \le C_{n,R}\sqrt{\e}.
\eea

Introduce 
\beaa
&J_{n,R}(t, \ul\xi, \dbG) := G(\cL_{\cL_{X^{n,R}_{t_n}|\cG_{t_n}}}) + \sum_{i=0}^{n-1} \int_{t_i}^{t_{i+1}}F(s,\cL_{\cL_{X^{n,R}_{t_i}|\cG_{t_i}}})ds;\\
&J_{n}(t, \ul\xi, \dbG) := G(\cL_{\cL_{X^{n}_{t_n}|\cG_{t_n}}}) + \sum_{i=0}^{n-1} \int_{t_i}^{t_{i+1}}F(s,\cL_{\cL_{X^{n}_{t_i}|\cG_{t_i}}})ds;\\
&\tilde J_{n}(t, \ul\xi, \dbG) := G(\cL_{\cL_{X_{t_n}|\cG_{t_n}}}) + \sum_{i=0}^{n-1} \int_{t_i}^{t_{i+1}}F(s,\cL_{\cL_{X_{t_i}|\cG_{t_i}}})ds.
\eeaa
By \eqref{Xnconv2} and the uniform continuity of $F$ and $G$, we have
\bea
\label{Xnconv3} 
\lim_{\e\to 0} \big|J_{n,R}(t, \ul\xi, \dbG) - J_{n,R}(t, \ul\xi', \dbG^{'\e})\big| = 0.
\eea
Let $\rho$ denote the modulus of continuity function of $F$, $G$ with respect to $\bmu$. By \eqref{Xnconv} we have
\bea
\label{Xnconv4} 
\big|J_{n,R}(t, \ul\xi, \dbG) - J_{n}(t, \ul\xi, \dbG)\big| \le C\rho(\e_{n,R}),\q 
\big|J_{n}(t, \ul\xi, \dbG) - \tilde J_{n}(t, \ul\xi, \dbG)\big| \le C\rho(\e_{n}).
\eea
Similar uniform estimates hold for $J_{n,R}(t, \ul\xi', \dbG^{'\e})$, $J_{n}(t, \ul\xi', \dbG^{'\e})$, and $\tilde J_{n}(t, \ul\xi', \dbG^{'\e})$.
Then, by sending $\e\to 0$ and $R\to \infty$, it follows from \eqref{Xnconv},  \eqref{Xnconv3}, and \eqref{Xnconv4} that
\bea
\label{Xnconv5} 
 \limsup_{\e\to 0} \big|\tilde J_{n}(t, \ul\xi, \dbG) - \tilde J_{n}(t, \ul\xi', \dbG^{'\e})\big| \le C\rho(\e_n).
\eea
Moreover, for $s\in [t_i, t_{i+1}]$, 
\beaa
\cW^2_2(\cL_{\cL_{X_{s}|\cG_{s}}}, \cL_{\cL_{X_{t_i}|\cG_{t_i}}}) = \cW^2_2(\cL_{\cL_{X_{s}|\cG_{s}}}, \cL_{\cL_{X_{t_i}|\cG_{s}}}) \le \dbE[|X_s - X_{t_i}|^2] \le {C\over n}.
\eeaa
Then, 
\beaa
\big|\tilde J_{n}(t, \ul\xi, \dbG) - J(t, \ul\xi, \dbG)\big| \le \sum_{i=0}^{n-1} \int_{t_i}^{t_{i+1}}\big| F(s,\cL_{\cL_{X_{s}|\cG_{s}}})-F(s,\cL_{\cL_{X_{t_i}|\cG_{t_i}}})\big|ds \le \rho({C\over \sqrt{n}}).
\eeaa
Similarly  $\big|\tilde J_{n}(t, \ul\xi', \dbG^{'\e})-  J(t, \ul\xi', \dbG^{'\e})\big| \le \rho({C\over \sqrt{n}})$. Then, together with \eqref{Xnconv5}, we obtain
\beaa
\limsup_{\e\to 0} \big|J(t, \ul\xi, \dbG) - J(t, \ul\xi', \dbG^{'\e})\big| \le C\rho(\e_n) + C\rho({C\over \sqrt{n}}).
\eeaa
Send $n\to \infty$, this clearly implies \eqref{invariant-est1}, and hence completes the proof.
\qed

We conclude this section with the following regularity result of the value function $V$.
\begin{thm}
\label{reg-VcU}
Besides Asumption \ref{assum-standing}, assume further that $F$ and $G$ are uniformly Lipschitz continuous in $\bmu$ under $\cW_2$. Then there exists a constant $C>0$, depending only on $T, d$, the Lipschitz constant of $b, \si$ in $x$, the Lipschitz constant of $F, G$ in $\bmu$, and  $\sup_t [|b(t, 0)|+|\si(t,0)| + |F(t, \d_{\d_0})|] + |G(\d_{\d_0})|$, such that
\bea
\label{Vreg}
\left.\ba{c}
|V(t, \bmu_1) - V(t, \bmu_2)| \le C\cW_2(\bmu_1, \bmu_2),\q \forall t\in [0, T], \bmu_1, \bmu_2\in \cP_2(\cP_2(\dbR^d));\smallskip\\
|V(t_1, \bmu) - V(t_2, \bmu)| \le C[1+\|\bmu\|_2] \sqrt{|t_1-t_2|},\q \forall t_1, t_2\in [0, T], \bmu\in \cP_2(\cP_2(\dbR^d)).
\ea\right.
\eea
\end{thm}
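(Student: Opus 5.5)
For the Lipschitz estimate in $\bmu$, I will use Theorem \ref{thm-coupling} together with the law invariance in Theorem \ref{thm-conditional-law-invariant}. Given $\bmu_1,\bmu_2$, the space $\cF_t$ is atomless, since it contains $\cF_0$. So Theorem \ref{thm-coupling} applied on $(\O,\cF_t,\dbP)$ gives $\xi_1,\xi_2\in\dbL^2(\cF_t)$ and $Y\in\dbL^0(\cF_t)$ with $\cL_{\cL_{\xi_i|Y}}=\bmu_i$ and $\dbE[|\xi_1-\xi_2|^2]=\cW_2^2(\bmu_1,\bmu_2)$. Set $\cG^0_t:=\si(Y)$. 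By \eqref{VcU}, $V(t,\bmu_i)=V_0(t,\ul\xi_i,\cG^0_t)$, and both problems share the same admissible set $\cA(t,\cG^0_t)$. Hence
\[
|V(t,\bmu_1)-V(t,\bmu_2)|\le\sup_{\dbG\in\cA(t,\cG^0_t)}|J(t,\ul\xi_1,\dbG)-J(t,\ul\xi_2,\dbG)|.
\]
For a fixed $\dbG$, I will couple the two conditional laws at each time $s$ through the same $\cG_s$. This gives
\[
\cW_2^2\big(\cL_{\cL_{X^{t,\xi_1}_s|\cG_s}},\cL_{\cL_{X^{t,\xi_2}_s|\cG_s}}\big)\le\dbE\big[W_2^2(\cL_{X^{t,\xi_1}_s|\cG_s},\cL_{X^{t,\xi_2}_s|\cG_s})\big]\le\dbE\big[|X^{t,\xi_1}_s-X^{t,\xi_2}_s|^2\big],
\]
where the last step uses the conditional joint law as a coupling. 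The standard SDE stability estimate bounds the right side by $C\,\dbE|\xi_1-\xi_2|^2$. The Lipschitz property of $F,G$ then yields $|J(\ul\xi_1)-J(\ul\xi_2)|\le C\cW_2(\bmu_1,\bmu_2)$, uniformly in $\dbG$. The main subtlety is this step: the same $\cG^0_t$ must be usable for both $\bmu_i$ so that the admissible sets coincide, and that is exactly what the common $Y$ in Theorem \ref{thm-coupling} provides.

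For the time regularity, take $t_1<t_2$ and write $\bmu=\cL_{\cL_{\xi|\cG^0}}$ with $\xi$ and $\cG^0$ realized in $\cF_{t_1}$ via Theorem \ref{thm-support}. Then $V(t_1,\bmu)=V_0(t_1,\ul\xi,\cG^0)$, and since $\cF_{t_1}\subset\cF_{t_2}$ the same $(\xi,\cG^0)$ also gives $V(t_2,\bmu)=V_0(t_2,\ul\xi,\cG^0)$. By the DPP, Proposition \ref{prop-DPP},
\[
V(t_1,\bmu)=\sup_{\dbG\in\cA(t_1,\cG^0)}\Big[V\big(t_2,\cL_{\cL_{X^{t_1,\xi}_{t_2}|\cG_{t_2}}}\big)+\int_{t_1}^{t_2}F\,ds\Big].
\]
The integral is bounded by $C(1+\|\bmu\|_2)|t_2-t_1|$, using linear growth of $F$ and \eqref{Xest}. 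For the first term, I insert $V(t_2,\cL_{\cL_{\xi|\cG_{t_2}}})$ and apply the spatial Lipschitz bound with the same coupling argument. This gives a bound by $C\big(\dbE|X^{t_1,\xi}_{t_2}-\xi|^2\big)^{1/2}\le C(1+\|\xi\|_2)\sqrt{t_2-t_1}$.

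It remains to compare $\cL_{\cL_{\xi|\cG_{t_2}}}$ with $\bmu=\cL_{\cL_{\xi|\cG^0}}$. Here the (H*)-hypothesis enters: $\cG_{t_2}\subset\cG^0\vee\cF^{t_1}_{t_2}$, and $\cF^{t_1}_{t_2}$ is independent of $\cF_{t_1}\ni\xi$. By Proposition \ref{prop-H}, $\xi$ is conditionally independent of $\cG_{t_2}\vee\cF^{t_1}_{t_2}$ given $\cG^0$, so $\cL_{\xi|\cG_{t_2}}=\cL_{\xi|\cG^0}$ and the two laws coincide. Thus $V(t_2,\cL_{\cL_{\xi|\cG_{t_2}}})=V(t_2,\bmu)$ for every admissible $\dbG$, and the supremum yields $|V(t_1,\bmu)-V(t_2,\bmu)|\le C(1+\|\bmu\|_2)\sqrt{|t_1-t_2|}$. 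I expect this identification of the conditional law under (H*) to be the one genuinely nontrivial point; if the identity is delicate, I will instead compare directly with the trivial choice $\cG_s\equiv\cG^0$ as a lower bound and use the argument above for the upper bound.
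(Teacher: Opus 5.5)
Your proposal is correct and follows the paper's proof essentially step by step. For the spatial estimate, you use Theorem \ref{thm-coupling} with a common $\cG^0_t=\si(Y)$, so both problems share the same admissible set, and then the conditional-law coupling bound $\dbE[|X^{t,\xi_1}_s-X^{t,\xi_2}_s|^2]$. For the time estimate, you use the DPP, the (H*)-identity $\cL_{\cL_{\xi|\cG_{t_2}}}=\bmu$, and the spatial Lipschitz bound. Your closing fallback is unnecessary: your (H*) argument already establishes that identity, and the upper bound genuinely needs it.
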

\proof We first prove the regularity in $\bmu$. Given $t, \bmu_1, \bmu_2$, by Theorem \ref{thm-coupling},
there exist $\xi_1, \xi_2\in \dbL^2(\cF_t)$ and $\cG^0_t = \si(Y)\subset \cF_t$ such that $\cL_{\cL_{\xi_i|\cG^0_t}}=\bmu_i$, $i=1,2$, and $\dbE[|\xi_1-\xi_2|^2] = \cW_2^2(\bmu_1, \bmu_2)$. For any $\dbG\in \cA(t, \cG^0_t)$, by the Lipschitz continuity of $b, \si$ and  standard SDE estimates, we have 
\beaa
\sup_{t\le s\le T} \cW_2^2(\cL_{\cL_{X^{t, \xi_1}_s|\cG_s}}, \cL_{\cL_{X^{t, \xi_2}_s|\cG_s}}) &\le& \sup_{t\le s\le T}\dbE\big[|X^{t, \xi_1}_s - X^{t, \xi_2}_s|^2\big]\\
&\le& C\dbE[|\xi_1-\xi_2|^2] = C\cW_2^2(\bmu_1, \bmu_2).
\eeaa  
Then, by the Lipchitz continuity of $F$ and $G$, we have
\beaa
|V(t, \bmu_1) - V(t, \bmu_2)| \le C\sup_{t\le s\le T} \cW_2(\cL_{\cL_{X^{t, \xi_1}_s|\cG_s}}, \cL_{\cL_{X^{t, \xi_2}_s|\cG_s}})  \le C\cW_2(\bmu_1, \bmu_2).
\eeaa

We next prove the regularity in $t$. Assume $t_1 < t_2$ and let $\bmu=\cL_{\cL_{\xi|\cG_{t_1}^0}}$ for some $\xi\in \dbL^2(\cF_{t_1})$ and $\cG^0_{t_1} \subset \cF_{t_1}$.  By the (H*)-hypothesis,  $\bmu=\cL_{\cL_{\xi|\cG_{t_2}}}$ for any $\dbG\in \cA(t_1, \cG^0_{t_1})$. Then, by the Dynamic Programming Principle \eqref{DPP} and the above Lipschitz continuity in $\bmu$, it follows from standard SDE estimates that
\beaa
&&|V(t_1,\bmu)-V(t_2,\bmu)|\\
&&= \Big|\sup_{\dbG\in\cA(t_1,\cG_{t_1}^0)}\Big[\int_{t_1}^{t_2} F(s,\cL_{\cL_{X^{t_1,\xi}_s|\cG_s}})ds+V(t_2,\cL_{\cL_{X^{t_1,\xi}_{t_2}|\cG_{t_2}}})-V(t_2,\cL_{\cL_{\xi|\cG^0_{t_1}}})\Big]\Big|\\
&&\le C|t_2-t_1| \sup_{t_1\le s\le t_2}\Big[1+\|\cL_{\cL_{X^{t_1,\xi}_s|\cG_s}}\|_2\Big] + C\cW_2\big(\cL_{\cL_{X^{t_1,\xi}_{t_2}|\cG_{t_2}}}, \cL_{\cL_{\xi|\cG_{t_2}}}\big)\\
&&\le C|t_2-t_1| \sup_{t_1\le s\le t_2}\Big(1+ \dbE\big[ |X^{t_1,\xi}_s|^2\big]\Big)^{1\over 2} + C\Big(\dbE\big[ |X^{t_1,\xi}_{t_2}-\xi|^2\big]\Big)^{1\over 2}\\
&&\le C|t_2-t_1| \sup_{t_1\le s\le t_2}\Big(1+ \dbE[ |\xi|^2]\Big)^{1\over 2} + C\Big([1+\dbE[\xi|^2] (t_2-t_1)\Big)^{1\over 2}\\
&&\le C[1+\|\xi\|_2] \sqrt{t_2-t_1} = C[1+\|\bmu\|_2]\sqrt{t_2-t_1}.
\eeaa
This completes the proof.
\qed

\section{Stochastic calculus on $\mathcal P_2(\mathcal P_2(\mathbb R^d))$}
\label{sect-Ito}
In this section, we develop the stochastic calculus on the space $\mathcal P_2(\mathcal P_2(\mathbb R^d))$, in particular the It\^o formula, which plays a key role in deriving the PDE for the value function $V$ of the information control problem  \eqref{VtG}-\eqref{VcU}. We equip $\mathcal P_2(\mathcal P_2(\mathbb R^d))$ with $\cW_2$ in \eqref{W2}, and  recall the linear functional derivatives for a function $u: \cP_2(\dbR^d)\to \dbR$, see e.g. \cite[Section 5.4]{CD1} and \cite{cox2024controlled}.

\begin{definition}
\label{defn-uderivative}
Let  $u: \mathcal P_2(\mathbb R^d)\rightarrow \mathbb{R}$. We call a continuous function ${\d\over \d\mu} u: \mathcal P_2(\mathbb R^d)\times \mathbb{R}^d \to \dbR$  the linear functional derivative of $u$ if $\big|\frac{\delta}{\delta \mu}u(\mu, x)\big| \leq C\big(1+| x|^2\big)$ for some constant $C$, which may depend on $u$ but is uniform on $\mu$, and for any $ \mu_1, \mu_2\in \cP_2(\dbR^d)$,
\bea
\label{linearu}
 u(\mu_2)-u(\mu_1)=\int_0^1 \int_{\mathbb{R}^d} \frac{\delta}{\delta \mu}u( \theta\mu_2+(1-\theta) \mu_1,x)(\mu_2-\mu_1)(\mathrm{d}x) \mathrm{d}\theta.
 \eea
\end{definition}

\begin{remark}
\label{rem-uderivative}
Assume further that, for any $x\in \dbR^d$, the function ${\d\over \d\mu}u(\cdot,  x)$ has the linear functional derivative, denoted as ${\d^2\over \d\mu^2}u(\mu, x, \tilde x) := {\d\over \d\mu}({\d\over \d\mu} u(\cdot, x))(\mu, \tilde x)$. We call ${\d^2\over \d\mu^2}u: \mathcal P_2(\mathbb R^d)\times \mathbb{R}^d\times \dbR^d \to \dbR$  the second order linear functional derivative of $u$ if it is jointly continuous and satisfies $|{\d^2\over \d\mu^2}u(\mu, x, \tilde x)|\le C[1+|x|^2+|\tilde x|^2]$, for some constant $C$ which may depend on $u$ but is uniform on $\mu$. We note that  ${\d^2\over \d\mu^2}u$ is symmetric in $(x,\tilde x)$: ${\d^2\over \d\mu^2}u(\mu, x, \tilde x) = {\d^2\over \d\mu^2}u(\mu, \tilde x, x)$.
\end{remark}

We next extend the above derivative to functions $U:\mathcal P_2(\mathcal P_2(\mathbb R^d))\to\mathbb R$.

\begin{definition}
\label{defn-Uderivative}
Let  $U: \mathcal P_2(\cP_2(\mathbb R^d))\rightarrow \mathbb{R}$. We call a continuous function ${\d\over \d\bmu} U: \cP_2(\cP_2(\dbR^d))\times \cP_2(\dbR^d) \to \dbR$  the linear functional derivative of $U$ if $\big|\frac{\delta}{\delta \bmu}U(\bmu, \mu)\big| \leq C\big(1+\|\mu\|_2^2\big)$ for some constant $C$, which may depend on $U$ but is uniform on $\bmu$, and for any $ \bmu_1, \bmu_2\in \cP_2(\dbR^d)$,
\bea
\label{linearU}
 U(\bmu_2)-U(\bmu_1)=\int_0^1 \int_{\cP_2(\mathbb{R}^d)} \frac{\delta}{\delta \bmu}U( \theta\bmu_2+(1-\theta) \bmu_1,\mu)(\bmu_2-\bmu_1)(\mathrm{d}\mu) \mathrm{d}\theta.
 \eea
\end{definition}

\noindent Moreover, for a function $U: [0, T]\times \mathcal P_2(\cP_2(\mathbb R^d))\rightarrow \mathbb{R}$, we define $\partial_t U$ in the obvious sense. 

 To prepare for the It\^{o} formula, we introduce
 \bea
 \label{BG}
 B^\dbG_t:= \dbE[B_t|\cG_t].
 \eea

\begin{lemma}
\label{lem-BG}
Let $\dbG$ satisfy the (H)-hypothesis (not necessarily (H*)-hypothesis). Then $B^\dbG$ is a $\dbG$-martingale and hence an $\dbF$-martingale. Moreover, $B^\dbG_t = \int_0^t \si^\dbG_s dB_s$ where $\si^\dbG(\si^\dbG)^\top$ is $\dbG$-adapted with $0\le \si^\dbG(\si^\dbG)^\top \le I_{d\times d}$.
\end{lemma}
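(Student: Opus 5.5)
First I will show that $B^\dbG$ is a $\dbG$-martingale. For $t<s$, the tower property gives $\dbE[B^\dbG_s|\cG_t] = \dbE[B_s|\cG_t] = \dbE[B_s - B_t|\cG_t] + B^\dbG_t$, so it is enough to prove $\dbE[B_s-B_t|\cG_t]=0$. The (H)-hypothesis, in its second form in Proposition \ref{prop-H}'s preamble, says that $\cF_t$ and $\cG_T$ are conditionally independent given $\cG_t$. Hence every square integrable $\dbG$-martingale is an $\dbF$-martingale. Take $\eta\in \dbL^2(\cG_s)$. Let $M_r:=\dbE[\eta|\cG_r]$, which is a $\dbG$-martingale on $[0,s]$ and therefore an $\dbF$-martingale, so $M_r = \dbE[\eta|\cF_r]$. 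Then
\[
\dbE[\eta(B_s-B_t)] = \dbE[(\eta - \dbE[\eta|\cF_t])(B_s-B_t)],
\]
and this can be handled with the $\dbF$-martingale representation $\eta = \dbE[\eta|\cF_t]+\int_t^s Z_r dB_r$. That gives a cross term $\dbE[\int_t^s Z_r dr]$, which is not obviously zero. So I will use a cleaner route. I will show directly that $B^\dbG$ is an $\dbF$-martingale via $\dbE[B_s|\cG_s]$, using the conditional independence of $\cF_s$ and $\cG_T$ given $\cG_s$. Alternatively, I will use that $\dbE[B_s-B_t|\cG_t] = \dbE[\dbE[B_s-B_t|\cF_t]|\cG_t]=0$, since $\cG_t\subset\cF_t$ and $B$ is an $\dbF$-Brownian motion. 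This last identity already gives the $\dbG$-martingale property without using (H). Square integrability is clear from $|B^\dbG_t|\le \dbE[|B_t||\cG_t]$. The (H)-hypothesis is then used only to conclude that $B^\dbG$ is also an $\dbF$-martingale.

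Next, since $B^\dbG$ is a square integrable $\dbF$-martingale and $\dbF=\cF_0\vee\dbF^B$, martingale representation (conditionally on $\cF_0$, which is independent of $B$) gives $B^\dbG_t = B^\dbG_0 + \int_0^t \si^\dbG_s dB_s$ with $\si^\dbG$ $\dbF$-progressive. Here $B^\dbG_0=\dbE[B_0|\cG_0]=0$.

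For the bounds on $\si^\dbG(\si^\dbG)^\top$, I will compare quadratic variations. Both $B$ and $B^\dbG$ are $\dbF$-martingales, and $B - B^\dbG = \int_0^t (I-\si^\dbG_s)dB_s$. Write $N:=B-B^\dbG$. For $t<s$ and any vector $v$, I claim that $\dbE[v^\top(N_s-N_t)\,\zeta]$ and the increments of $v^\top B^\dbG$ are orthogonal in the right sense. Concretely, $\dbE[(v^\top B^\dbG_s)(v^\top N_s)] = \dbE[v^\top B^\dbG_s\, v^\top(B_s - \dbE[B_s|\cG_s])]=0$, since $B^\dbG_s$ is $\cG_s$-measurable. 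Applying this conditionally on increments, with an arbitrary $\eta\in\dbL^\infty(\cG_t)$ as a weight, gives $\langle B^\dbG, N\rangle\equiv 0$, and hence $d\langle B^\dbG\rangle = \si^\dbG(\si^\dbG)^\top dt$. The equation $\si^\dbG(\si^\dbG)^\top = \si^\dbG$ (as a symmetric matrix) would follow from $\si^\dbG(I-\si^\dbG)^\top=0$. That identity makes $\si^\dbG$ an orthogonal projection, which gives $0\le \si^\dbG(\si^\dbG)^\top\le I$. If the projection identity proves too strong to derive, I will fall back on $\langle B^\dbG\rangle_s-\langle B^\dbG\rangle_t \le \langle B\rangle_s-\langle B\rangle_t$ in the matrix sense. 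This follows from the conditional Jensen inequality applied to the increments, and yields $\si^\dbG(\si^\dbG)^\top\le I$ directly.

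The step I expect to be hardest is showing that $\si^\dbG(\si^\dbG)^\top$ is $\dbG$-adapted rather than merely $\dbF$-adapted. For this I plan to use that $\langle B^\dbG\rangle$ can be computed as a limit in probability of sums of squared increments of the $\dbG$-adapted process $B^\dbG$. It is therefore $\dbG$-adapted, and its density, obtained as a limit of difference quotients, is $\dbG$-progressive.
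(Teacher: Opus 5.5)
Your proofs of the $\dbG$-martingale property (via $\dbE[B_s-B_t|\cG_t]=0$), the $\dbF$-martingale property under (H), the representation $B^\dbG_t=\int_0^t\si^\dbG_s\,dB_s$, and the $\dbG$-adaptedness of $\si^\dbG(\si^\dbG)^\top$ via the quadratic variation of $B^\dbG$ match the paper.

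\textbf{The gap is in your primary route to the bounds.} Under the (H)-hypothesis alone, the claims $\langle B^\dbG,N\rangle\equiv 0$ and $\si^\dbG=\si^\dbG(\si^\dbG)^\top$ are false. A counterexample:
\begin{itemize}
\item Take $d=1$ and $\theta\in\dbL^0(\cF_0)$ with $\dbP(\theta=1)=\tfrac{3}{4}=1-\dbP(\theta=-1)$. This exists because $\cF_0$ is atomless and independent of $B$.
\item Set $Y:=\theta B$ and $\dbG:=\dbF^Y$. Then $Y$ is an $\dbF$-Brownian motion, hence independent of $\cF_0\ni\theta$. Every $\dbG$-martingale is a stochastic integral against $Y$, so (H) holds.
\item We get $B^\dbG_t=\dbE[\theta Y_t|\cG_t]=\tfrac{1}{2}Y_t$, i.e.\ $\si^\dbG=\tfrac{\theta}{2}$.
\item So $\si^\dbG(\si^\dbG)^\top=\tfrac14$, as the lemma predicts. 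But $\si^\dbG(1-\si^\dbG)=\tfrac{\theta}{2}-\tfrac14$, which equals $\tfrac14$ on $\{\theta=1\}$ and $-\tfrac34$ on $\{\theta=-1\}$. Hence $\langle B^\dbG,N\rangle\not\equiv 0$ and $\si^\dbG$ is not a projection.
\end{itemize}
Your weighted orthogonality argument has $\cG_t$-measurable weights only, so it yields just
\[
\dbE\Big[\int_t^s v^\top\si^\dbG_r(I-\si^\dbG_r)^\top v\,dr\,\Big|\,\cG_t\Big]=0 .
\]
Since $\si^\dbG$ itself, unlike $\si^\dbG(\si^\dbG)^\top$, need not be $\dbG$-adapted, this does not force the integrand to vanish. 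In the example its $\cG_t$-conditional mean is $\tfrac{\dbE[\theta]}{2}-\tfrac14=0$, which is consistent with what you proved.

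\textbf{Your fallback is the correct argument and is exactly the paper's proof, but it needs to be stated precisely.}
\begin{itemize}
\item Conditional Jensen does not give a pathwise comparison of quadratic variations.
\item First use (H) in the form $\dbE[B_t|\cG_s]=\dbE[B_t|\cG_t]$ to write $B^\dbG_s-B^\dbG_t=\dbE[B_s-B_t|\cG_s]$. So (H) enters here too, not only in the $\dbF$-martingale claim.
\item Then, for $\eta\in\dbL^2(\cG_t;\dbR^d)$,
\[
\dbE\Big[\int_t^s|\eta^\top\si^\dbG_r|^2\,dr\,\Big|\,\cG_t\Big]=\dbE\Big[\big|\dbE[\eta\cdot(B_s-B_t)\,|\,\cG_s]\big|^2\,\Big|\,\cG_t\Big]\le|\eta|^2(s-t).
\]
\item This is again only a $\cG_t$-conditional statement. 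It upgrades to $\si^\dbG(\si^\dbG)^\top\le I$, $dt\times d\dbP$-a.e., precisely because $\si^\dbG(\si^\dbG)^\top$ is $\dbG$-progressive.
\end{itemize}
So the adaptedness step you list last is the ingredient that makes the bound work. It must come before the bound, and its absence for $\si^\dbG$ itself is exactly why the projection route fails.
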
 
\proof First, since $B^t_T$ is independent of $\cG_t\subset \cF_t$, $B^\dbG_t = \dbE[B_T|\cG_t]$ is obviously a $\dbG$-martingale. Then by the (H)-hypothesis it is also an $\dbF$-martingale, and thus $B^\dbG_t = \int_0^t \si^\dbG_s dB_s$ for some $\si^\dbG$. Since $B^\dbG$ is $\dbG$-adapted, as the quadratic variation it is clear that $\si^\dbG(\si^\dbG)^\top$ is also  $\dbG$-adapted.

Next, fix $0\le t<t+\d\le T$, for any $\eta\in \dbL^2(\cG_t, \dbR^d)$, we have
\beaa
&&\displaystyle\dbE\Big[ \int_t^{t+\d} |\eta^\top\si^\dbG_s|^2 ds \Big|\cG_t\Big] = \dbE\Big[\big|\int_t^{t+\d} \eta\si^\dbG_s dB_s\big|^2\Big|\cG_t\Big] = \dbE\Big[|\eta\cdot B^\dbG_{t+\d} - \eta\cdot B^\dbG_t|^2\Big|\cG_t\Big] \\
&&\displaystyle = \dbE\Big[\big|\dbE[\eta \cdot B^t_{t+\d}|\cG_{t+\d}] \big|^2\Big|\cG_t\Big] \le \dbE\Big[\big|\eta \cdot B^t_{t+\d}\big|^2\Big|\cG_t \Big] =|\eta|^2\d.
\eeaa
This implies that $0\le \si^\dbG(\si^\dbG)^\top \le I_{d\times d}$, $dt\times d\dbP$-a.s.
\qed

We are now ready to establish the It\^{o} formula. 
\begin{thm}
\label{thm-Ito}
Let $\dbG\in \cA_0$, and $\cX_t = \cX_0 + \int_0^t \a_s ds + \int_0^t \b_s dB_s$, where $\cX_0\in \dbL^2(\cF_0)$ and $\a, \b\in \dbL^2(\dbF)$. Let $U: [0, T]\times \mathcal P_2(\cP_2(\mathbb R^d))\rightarrow \mathbb{R}$ be such that all the following derivatives, as well as its lower order derivatives,  exist and are locally uniformly continuous in the sense of Assumption \ref{assum-standing} (ii): 
\bea
\label{derivatives}
\left.\ba{c}
\displaystyle\partial_t V(t, \bmu),\q  \partial_{xx} {\d\over \d\mu}{\d\over \d\bmu} V(t, \bmu, \mu, x),\q  \partial_{(x,\tilde x)} ^{(2)}{\d^2\over \d\mu^2}{\d\over \d\bmu} V(t, \bmu, \mu, x, \tilde x).
\ea\right.
\eea
Here $ \partial^{(2)}_{(x,\tilde x)}$ denotes all the second order derivatives with respect to $x$ and $\tilde x$. Then the following It\^{o} formula holds true, with $:$ denoting the Frobenius inner product:

\begin{equation}
\label{Ito}
\begin{aligned}
\frac{d}{dt} V(t,\mathcal L_{\mathcal L_{\cX_{t}|\mathcal G_{t}}})
&=\partial_t V(t,\mathcal L_{\mathcal L_{\cX_t|\mathcal G_t}})+\mathbb E\Big[\partial_x\frac{\delta}{\delta\mu}\frac{\delta}{\delta \bmu} V\Big(t,\mathcal L_{\mathcal L_{\cX_{t}|\mathcal G_{t}}}, \mathcal L_{\cX_{t}|\mathcal G_{t}},X_{t}\Big) \cdot \a_t\Big]\\
 &\quad+\mathbb E\Big[\frac{1}{2}\partial_{xx}\frac{\delta}{\delta\mu}\frac{\delta}{\delta\bmu}V\Big(t,\mathcal L_{\mathcal L_{\cX_{t}|\mathcal G_{t}}}, \mathcal L_{\cX_{t}|\mathcal G_{t}},X_{t}\Big): \b_t\b^\top_t\Big]\\
 &\quad+\mathbb E\Big[\frac{1}{2}\partial_{x\tilde x}\frac{\delta^2}{\delta\mu^2}\frac{\delta}{\delta\bmu}V\Big(t,\mathcal L_{\mathcal L_{\cX_{t}|\mathcal G_{t}}}, \mathcal L_{\cX_{t}|\mathcal G_{t}},  \cX_{t},\widetilde \cX_t\Big): \b_t \si^\dbG_t(\si_t^{\mathbb G})^\top\widetilde \b_t^\top\Big],
\end{aligned}
\end{equation}
where $(\tilde \b, \tilde \cX)$ is a conditionally independent copy of $(\b, \cX)$, conditional on $\mathcal G_t$. 
\end{thm}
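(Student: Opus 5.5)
We follow the strategy announced in the introduction: time discretization plus second-order Taylor expansion, rather than finite-dimensional approximation. Fix $t<t+\d$ and write $\bmu_s:=\cL_{\cL_{\cX_s|\cG_s}}$ and $\mu_s:=\cL_{\cX_s|\cG_s}$. The aim is to show that $V(t+\d,\bmu_{t+\d})-V(t,\bmu_t)$ equals $\d$ times the right side of \eqref{Ito} at time $t$, up to $o(\d)$ uniformly on compact time sets. The full formula then follows by summing over a partition of $[0,T]$ with mesh going to zero and using the local uniform continuity of the derivatives in \eqref{derivatives} together with $\dbL^2$-continuity of $\cX$. The $\partial_t V$ term is split off as usual, so it suffices to handle $V(t,\bmu_{t+\d})-V(t,\bmu_t)$.

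\textbf{Step 1: expansion in $\bmu$.} Apply \eqref{linearU} with $\bmu_1=\bmu_t$ and $\bmu_2=\bmu_{t+\d}$. Replacing $\th\bmu_{t+\d}+(1-\th)\bmu_t$ by $\bmu_t$ costs $o(\cW_2(\bmu_t,\bmu_{t+\d}))$ times a bounded quantity, hence $o(\sqrt\d)$. This is too crude a priori, so we must check that the first-order term is itself $O(\d)$ and that the cross error is $o(\d)$. To do so we use the representation
\[
\int \tfrac{\d}{\d\bmu}V(\bmu,\mu)\,(\bmu_{t+\d}-\bmu_t)(d\mu)=\dbE\Big[\tfrac{\d}{\d\bmu}V(\bmu,\mu_{t+\d})-\tfrac{\d}{\d\bmu}V(\bmu,\mu_t)\Big].
\]
The key point is to write both conditional laws under the common $\si$-algebra $\cG_{t+\d}$. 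By the (H*)-hypothesis and Proposition \ref{prop-H}, $\cF_t$ is conditionally independent of $\cG_{t+\d}\vee\cF^t_{t+\d}$ given $\cG_t$. Hence $\cL_{\cX_t|\cG_{t+\d}}=\cL_{\cX_t|\cG_t}$ a.s., exactly as in the proof of Theorem \ref{reg-VcU}. It follows that $\dbE[\frac{\d}{\d\bmu}V(\bmu,\mu_t)]=\dbE[\frac{\d}{\d\bmu}V(\bmu,\cL_{\cX_t|\cG_{t+\d}})]$.

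\textbf{Step 2: expansion in $\mu$ and then in $x$.} For each $\o$ we now expand $u(\mu):=\frac{\d}{\d\bmu}V(\bmu_t,\mu)$ between $\cL_{\cX_t|\cG_{t+\d}}$ and $\cL_{\cX_{t+\d}|\cG_{t+\d}}$ to second order in the linear derivative, using Remark \ref{rem-uderivative}. This produces two terms:
\begin{itemize}
\item[(a)] a first-order term $\dbE[\frac{\d}{\d\mu}u(\mu_t,\cX_{t+\d})-\frac{\d}{\d\mu}u(\mu_t,\cX_t)]$;
\item[(b)] a second-order term $\frac12\dbE\big[\tilde\dbE[\D\,\D\,\frac{\d^2}{\d\mu^2}u]\big]$, in which the double increment is taken in $x$ and $\tilde x$ over independent conditional copies given $\cG_{t+\d}$.
\end{itemize}
Term (a) is handled by the classical Itô formula in $x$. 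It yields $\partial_x\frac{\d}{\d\mu}\frac{\d}{\d\bmu}V\cdot\a_t\,\d+\frac12\partial_{xx}\frac{\d}{\d\mu}\frac{\d}{\d\bmu}V:\b_t\b_t^\top\,\d+o(\d)$. For term (b), Taylor expansion in $(x,\tilde x)$ shows that only the mixed term $\partial_{x\tilde x}$ survives at order $\d$. Its contribution is
\[
\dbE\Big[\partial_{x\tilde x}\tfrac{\d^2}{\d\mu^2}u:\dbE\big[\textstyle\int_t^{t+\d}\b dB\,\big|\,\cG_{t+\d}\big]\,\dbE\big[\textstyle\int_t^{t+\d}\b dB\,\big|\,\cG_{t+\d}\big]^\top\Big].
\]
Freezing $\b\approx\b_t$ and using conditional independence, this becomes $\b_t\,\dbE[B^t_{t+\d}|\cG_{t+\d}]\,\dbE[B^t_{t+\d}|\cG_{t+\d}]^\top\,\tilde\b_t^\top$. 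The (H*)-hypothesis gives $\dbE[B^t_{t+\d}|\cG_{t+\d}]=B^\dbG_{t+\d}-B^\dbG_t$, because $\dbE[B_t|\cG_{t+\d}]=\dbE[B_t|\cG_t]$. Lemma \ref{lem-BG} then shows that the conditional second moment equals $\int_t^{t+\d}\si^\dbG(\si^\dbG)^\top ds\approx\si^\dbG_t(\si^\dbG_t)^\top\d$. This produces exactly the last term of \eqref{Ito}.

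\textbf{Step 3: remainder estimates (main obstacle).} The main obstacle is showing that all remainders are $o(\d)$ rather than $o(\sqrt\d)$. There are two sources of error.
\begin{itemize}
\item The first-order $\bmu$-derivative is evaluated at intermediate points $\th\bmu_{t+\d}+(1-\th)\bmu_t$. The second-order $\mu$-expansion only gives a product of two $O(\sqrt\d)$ increments, multiplied by a modulus of continuity that tends to $0$.
\item Freezing $\b_s$ and $\a_s$ at time $t$ is only $\dbL^2$-continuous on average.
\end{itemize}
I will first assume $\a,\b$ are bounded, continuous simple processes and that $\cX$ is bounded, so that the local uniform continuity on $\cK_R$ applies. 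The general case then follows by density in $\dbL^2(\dbF)$ together with the $\cW_2$-stability of both sides. For the intermediate-point error in Step 1, I would use that after Step 2 the first-order term equals $\int\!\!\int_t^{t+\d}(\cdots)ds$ with integrands continuous in $\bmu$. Replacing $\bmu_t$ by an intermediate point therefore costs $\d\cdot\rho(C\sqrt\d)$. Summing over the partition then gives a total error of $\rho(C\sqrt{\d})T\to0$.
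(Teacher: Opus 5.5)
Your architecture matches the paper's: time discretization, \eqref{linearU} with the deterministic intermediate point $\bmu^\th$ kept until the end, a second-order expansion in $\mu$ with both conditional laws taken under $\cG_{t+\d}$, the classical It\^o formula for the first-order part, and Lemma \ref{lem-BG} for the bracket. However, Step 2(b) has a genuine gap. Write $\D\cX:=\cX_{t+\d}-\cX_t$ and $v:=\tfrac{\d^2}{\d\mu^2}\tfrac{\d}{\d\bmu}V(t,\bmu_t,\mu_t,\cdot,\cdot)$. After the Taylor expansion the mixed term is
\[
\dbE\big[\partial_{x\tilde x}v(\cX_t,\tilde\cX_t):\D\cX\,\D\tilde\cX^\top\big].
\]
The weight $\partial_{x\tilde x}v(\cX_t,\tilde\cX_t)$, like $\b_t$ and $\tilde\b_t$, is $\cF_t\vee\tilde\cF_t$-measurable but not $\cG_{t+\d}$-measurable. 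So you cannot replace $\D\cX$ by $\dbE[\int_t^{t+\d}\b\,dB|\cG_{t+\d}]$ while leaving the weight outside. Nor can you pull $\b_t$ out of a $\cG_{t+\d}$-conditional expectation. Your intermediate display is in general a different quantity: it would carry $\dbE[\b_t|\cG_t]$ instead of $\b_t$. The second illegitimate move then happens to restore the right final expression.

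What is actually needed is to condition on $\cF_t\vee\tilde\cF_t\vee\cG_{t+\d}$ and prove
\[
\dbE\big[B^t_{t+\d}\,\big|\,\cF_t\vee\tilde\cF_t\vee\cG_{t+\d}\big]=\dbE\big[B^t_{t+\d}\,\big|\,\cG_{t+\d}\big]=B^\dbG_{t+\d}-B^\dbG_t,
\]
together with the factorization of the product. These are the paper's \eqref{Ito-claim}--\eqref{Ito-claim2}. The paper obtains them by passing to the regular conditional probability given $\cG_t$. Under it, (H*) gives $\cG_{t+\d}\subset\cF^t_{t+\d}$ a.s., while $\cF_t\vee\tilde\cF_t$ is independent of $\cF^t_{t+\d}$. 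Hence $B^t_{t+\d}$ and $\cF_t\vee\tilde\cF_t$ are conditionally independent given $\cG_{t+\d}$. You quote the right characterization (Proposition \ref{prop-H}) in Step 1, but you only extract its (H)-consequence $\cL_{\cX_t|\cG_{t+\d}}=\cL_{\cX_t|\cG_t}$.

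This omission is not cosmetic. Everything your argument actually uses, namely $\cL_{\cX_t|\cG_{t+\d}}=\cL_{\cX_t|\cG_t}$ and $\dbE[B_t|\cG_{t+\d}]=\dbE[B_t|\cG_t]$, follows from (H) alone, yet \eqref{Ito} fails under (H) alone. Here is a counterexample.
\begin{itemize}
\item Take $d=1$ and $\cX=\cX_0+B$, with $\cX_0\in\dbL^2(\cF_0)$ symmetric and atomless. Set $W:=\mathrm{sign}(\cX_0)B$ and $\cG_s:=\cF^W_s$.
\item Then $W$ is an $\dbF$-Brownian motion independent of $\cF_0$, and $\cF_s=\cF_0\vee\cF^W_s$, so (H) holds.
\item Moreover $B^\dbG_s=W_s\,\dbE[\mathrm{sign}(\cX_0)]=0$, so the last term of \eqref{Ito} vanishes. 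Yet $\dbE[B_s|\cF_0\vee\cG_s]=B_s\neq 0$.
\item For $U(\bmu):=\int(\int\phi\,d\mu)^2\bmu(d\mu)$ one finds $U(\bmu_s)=\dbE[F(W_s)^2]$, where $F(w):=\dbE[\phi(\cX_0+\mathrm{sign}(\cX_0)w)]$.
\item Its derivative at $s=0$ is $F(0)F''(0)+F'(0)^2$, whereas the right side of \eqref{Ito} equals $F(0)F''(0)$.
\item The discrepancy $F'(0)^2=(\dbE[\phi'(\cX_0)\,\mathrm{sign}(\cX_0)])^2$ is positive, e.g.\ for $\phi(x)=1-e^{-x^2}$.
\end{itemize}
So the (H*)-based conditional independence is the one genuinely new ingredient of this It\^o formula, and it is exactly what Step 2(b) skips. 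By comparison, the $o(\d)$ remainder estimates you single out as the main obstacle are routine.
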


\proof It suffices to verify $V(T, \mathcal L_{\mathcal L_{\cX_{T}|\mathcal G_{T}}})- V(0, \mathcal L_{\mathcal L_{\cX_{0}|\mathcal G_{0}}})$. For this purpose, fix $n$ and consider the uniform time partition: $t_i:= i\D t$, $i=0,\cdots, n$, with $\D t :=  {T\over n}$. Denote
\beaa
\mu_t:= \cL_{\cX_{t}|\mathcal G_{t}}, \q \bmu_t:= \mathcal L_{\mu_t},\q \mu^\th_i:= \th\mu_{t_{i+1}}+(1-\th)\mu_{t_i},\q \bmu^\th_i := \th \bmu_{t_{i+1}} + (1-\th) \bmu_{t_i}.
\eeaa
Then
\beaa
V(T, \mathcal L_{\mathcal L_{\cX_{T}|\mathcal G_{T}}})- V(0, \mathcal L_{\mathcal L_{\cX_{0}|\mathcal G_{0}}}) = \sum_{i=0}^{n-1} \Big[V(t_{i+1}, \bmu_{t_{i+1}})- V(t_{i}, \bmu_{t_{i}})\Big] = I^n_1 + I^n_2,
\eeaa
where, 
\beaa
I^n_1 &:=&  \sum_{i=0}^{n-1} \Big[V(t_{i+1}, \bmu_{t_{i+1}})- V(t_{i}, \bmu_{t_{i+1}})\Big] =\sum_{i=0}^{n-1} \int_0^1 \partial_t V(t_i + \th \D t, \bmu_{t_{i+1}}) d\th \D t;\\
I^n_2 &:=& \sum_{i=0}^{n-1} \Big[V(t_{i}, \bmu_{t_{i+1}})- V(t_{i}, \bmu_{t_i})\Big] =  \sum_{i=0}^{n-1} \int_0^1 \int_{\cP_2(\dbR^d)} {\d\over \d \bmu}V\big(t_{i},  \bmu^\th_{i}, \mu\big)\big( \bmu_{t_{i+1}} - \bmu_{t_i}\big)(d\mu) d\th\\
&=&  \sum_{i=0}^{n-1}\int_0^1  \dbE\Big[{\d\over \d \bmu}V\big(t_{i},\bmu^\th_{i},  \mu_{t_{i+1}}\big)- {\d\over \d \bmu}V\big(t_{i}, \bmu^\th_{i},  \mu_{t_{i}}\big)\Big] d\th\\
&=&  \sum_{i=0}^{n-1}\int_0^1\int_0^1 \dbE\Big[\int_{\dbR^d} {\d\over \d\mu}{\d\over \d \bmu}V\big(t_{i},\bmu^\th_{i},  \mu^{\th'}_i, x\big) (\mu_{t_{i+1}} -\mu_{t_i})(dx)\Big] d\th'd\th.
\eeaa
We shall decompose $I^n_2$ further. Denote
\beaa
I^n_{2,1} &:=& \sum_{i=0}^{n-1}\int_0^1 \dbE\Big[\int_{\dbR^d} {\d\over \d\mu}{\d\over \d \bmu}V\big(t_{i},\bmu^\th_{i}, \mu_{t_i}, x\big) (\mu_{t_{i+1}} -\mu_{t_i})(dx)\Big] d\th\\
&=&\sum_{i=0}^{n-1}\int_0^1 \dbE\Big[ \dbE\big[{\d\over \d\mu}{\d\over \d \bmu}V\big(t_{i},\bmu^\th_{i},  \mu_{t_i}, \cX_{t_{i+1}}\big)\big|\cG_{t_{i+1}}\big] \\
&&\qquad\qquad -\dbE\big[{\d\over \d\mu}{\d\over \d \bmu}V\big(t_{i},\bmu^\th_{i},  \mu_{t_i}, \cX_{t_{i}}\big)\big|\cG_{t_i}\big]\Big] d\th\\
&=&\sum_{i=0}^{n-1}\int_0^1 \dbE\Big[{\d\over \d\mu}{\d\over \d \bmu}V\big(t_{i},\bmu^\th_{i},  \mu_{t_i}, \cX_{t_{i+1}}\big)-{\d\over \d\mu}{\d\over \d \bmu}V\big(t_{i},\bmu^\th_{i},  \mu_{t_i}, \cX_{t_{i}}\big)\Big] d\th.
\eeaa
Here we used the fact that $\mu_{t_i}$ is $\cG_{t_i}\subset \cG_{t_{i+1}}$-measurable. Then, by applying the standard It\^{o} formula on $[t_i, t_{i+1}]$ we have
\beaa
&&I^n_{2,1} =\sum_{i=0}^{n-1}\int_0^1 \int_{t_i}^{t_{i+1}} \dbE\Big[\partial_x{\d\over \d\mu}{\d\over \d \bmu}V\big(t_{i},\bmu^\th_{i},  \mu_{t_i}, \cX_t\big) \cdot \a_t \\
&&\qquad\qquad\qquad + {1\over 2}\partial_{xx}{\d\over \d\mu}{\d\over \d \bmu}V\big(t_{i},\bmu^\th_{i},  \mu_{t_i}, \cX_t\big) : \b_t\b_t^\top\Big]dt d\th.
\eeaa
Next, denote $\D \mu_{t_{i+1}} := \mu_{t_{i+1}}-\mu_{t_i}$, and
\beaa
&& I^n_{2,2} := I^n_2 - I^n_{2,1}\\
&&=\sum_{i=0}^{n-1}\int_0^1\!\!\!\int_0^1\!\!\! \dbE\Big[\int_{\dbR^d}\big[ {\d\over \d\mu}{\d\over \d \bmu}V\big(t_{i},\bmu^\th_{i}, ~ \mu^{\th'}_i, x\big) - {\d\over \d\mu}{\d\over \d \bmu}V\big(t_{i},\bmu^\th_{i}, ~ \mu_{t_i}, x\big)\big] \D\mu_{t_{i+1}}(dx)\Big] d\th'd\th\\
&&=\sum_{i=0}^{n-1}\int_0^1\!\!\!\int_0^1\!\!\!\int_0^{\th'}\!\!\! \dbE\Big[\int_{\dbR^d\times \dbR^d} {\d^2\over \d\mu^2}{\d\over \d \bmu}V\big(t_{i},\bmu^\th_{i}, ~ \mu^{\tilde\th}_i, x, \tilde x\big) \D\mu_{t_{i+1}}(d\tilde x)\D\mu_{t_{i+1}} (dx)\Big] d\tilde \th d\th'd\th\\
\eeaa

We now analyze the convergence when sending $n\to \infty$. Note that
\bea
\label{ItoXest}
R_0^2:= \dbE\Big[\sup_{0\le t\le T} |\cX_s|^2\Big] <\infty.
\eea
Here and in the sequel, for given $R_0$, we use $o(1)$ to denote terms which converge to $0$ as $n\to \infty$, uniformly in $i$ or $t$. Similarly we use the notation $o(\D t)$.  Note that, by the (H)-hypothesis, $\cL_{\cX_t |\cG_t} = \cL_{\cX_t |\cG_{t+\d}}$. Then, for $\d\le \D t = {T\over n}$,
\beaa
\cW_2^2(\bmu_{t+\d}, \bmu_t) &\le& \dbE\Big[W_2^2(\mu_{t+\d}, \mu_t)\Big] = \dbE\Big[W_2^2\big(\cL_{\cX_{t+\d} |\cG_{t+\d}}, \cL_{\cX_t |\cG_{t+\d}}\big)\Big]\\
& \le& \dbE \Big[ \dbE\big[\big|\cX_{t+\d} - \cX_{t}\big|^2\big| \cG_{t+\d}\big]\Big] =\dbE\Big[\big|\cX_{t+\d} - \cX_{t}\big|^2\Big] = o(1).
\eeaa
Note that the local uniform continuity of the derivatives in \eqref{derivatives} implies their boundedness and uniform continuity in $\cK_{R_0}$. Then we have, for $n\to \infty$,
\beaa
&&\!\!\!\!\!\!\! I^n_1=\sum_{i=0}^{n-1}  \partial_t V(t_i, \bmu_{t_{i+1}}) \D t + o(1) =  \sum_{i=0}^{n-1} \int_{t_i}^{t_{i+1}} \partial_t V(t, \bmu_{t})dt + o(1) = \int_0^T \partial_t V(t, \bmu_{t})dt + o(1);\\
 &&\!\!\!\!\!\!\! I^n_{2,1} =\sum_{i=0}^{n-1}\int_{t_i}^{t_{i+1}}\dbE\Big[\partial_x{\d\over \d\mu}{\d\over \d \bmu}V\big(t,\bmu_t,  \mu_t, \cX_t\big) \cdot \a_t + {1\over 2}\partial_{xx}{\d\over \d\mu}{\d\over \d \bmu}V\big(t,\bmu_t,  \mu_{t}, \cX_t\big) : \b_t\b_t^\top\Big]dt\\
&&\!\!\!\!\!\!\!\q +\int_0^1\int_0^T\dbE\Big[\Big(\sum_{i=0}^{n-1} \partial_x{\d\over \d\mu}{\d\over \d \bmu}V\big(t,\bmu_i^\th,  \mu_{t_i}, \cX_t\big)1_{t\in[t_i,t_{i+1})}-\partial_x{\d\over \d\mu}{\d\over \d \bmu}V\big(t,\bmu_t,  \mu_t, \cX_t\big) \Big)\cdot \a_t\Big]dtd\th\\
&&\!\!\!\!\!\!\! \q+\int_0^1\int_0^T\dbE\Big[\Big(\sum_{i=0}^{n-1} {1\over 2}\partial_{xx}{\d\over \d\mu}{\d\over \d \bmu}V\big(t,\bmu_i^\th,  \mu_{t_i}, \cX_t\big)1_{t\in[t_i,t_{i+1})}\\
&&\qquad\qquad\qquad - {1\over 2}\partial_{xx}{\d\over \d\mu}{\d\over \d \bmu}V\big(t,\bmu_t,  \mu_{t}, \cX_t\big) \Big): \b_t\b_t^\top\Big]dtd\th\\
&&\!\!\!\!\!\!\! =\sum_{i=0}^{n-1}\int_{t_i}^{t_{i+1}}\dbE\Big[\partial_x{\d\over \d\mu}{\d\over \d \bmu}V\big(t,\bmu_t,  \mu_t, \cX_t\big) \cdot \a_t + {1\over 2}\partial_{xx}{\d\over \d\mu}{\d\over \d \bmu}V\big(t,\bmu_t,  \mu_{t}, \cX_t\big) : \b_t\b_t^\top\Big]dt+o(1)\\
&&\!\!\!\!\!\!\! = \int_0^T\dbE\Big[\partial_x{\d\over \d\mu}{\d\over \d \bmu}V\big(t,\bmu_t,  \mu_t, \cX_t\big) \a_t + {1\over 2}\partial_{xx}{\d\over \d\mu}{\d\over \d \bmu}V\big(t,\bmu_t,  \mu_{t}, \cX_t\big) : \b_t\b_t^\top\Big]dt+o(1).
\eeaa

It remains to analyze $I^n_{2,2}$.  Similarly decompose it:
\bea
\label{In22}
I^n_{2,2} &=& \sum_{i=0}^{n-1}\dbE\Big[{1\over 2}I^n_{2,2,i} + R_{2,2,i}^n\Big],\quad \mbox{where}\\
I^n_{2,2,i} &:=&  \int_{\dbR^d\times \dbR^d} {\d^2\over \d\mu^2}{\d\over \d \bmu}V\big(t_{i},\bmu_{t_i}, ~ \mu_{t_i}, x, \tilde x\big) \D\mu_{t_{i+1}}(d\tilde x)\D\mu_{t_{i+1}} (dx)\Big],\nonumber\\
R_{2,2,i}^n &:=&\int_0^1\int_0^1\int_0^{\th'} \int_{\dbR^d\times \dbR^d}\Big( {\d^2\over \d\mu^2}{\d\over \d \bmu}V\big(t_{i},\bmu^\th_{i}, ~ \mu^{\tilde\th}_i, x, \tilde x\big)\nonumber\\
&&  -{\d^2\over \d\mu^2}{\d\over \d \bmu}V\big(t_{i},\bmu_{t_i}, ~ \mu_{t_i}, x, \tilde x\big) \Big)\D\mu_{t_{i+1}} (d\tilde x)\D\mu_{t_{i+1}}(dx) d\tilde \th d\th'd\th.\nonumber
\eea
Denote $v_i(x, \tilde x) := {\d^2\over \d\mu^2}{\d\over \d \bmu}V\big(t_{i},\bmu_{t_i},  \mu_{t_i}, x, \tilde x\big)$, which is $\cG_{t_i}$-measurable, we have
\beaa
 I^n_{2,2,i} &:=&\int_{\dbR^d\times \dbR^d} v_i(x, \tilde x\big) (\mu_{t_{i+1}} -\mu_{t_i})(d\tilde x)(\mu_{t_{i+1}} -\mu_{t_i})(dx)\\
 &=& \int_{\dbR^d\times \dbR^d} v_i(x, \tilde x\big) (\cL_{\cX_{t_{i+1}}|\cG_{t_{i+1}}} -\cL_{\cX_{t_{i}}|\cG_{t_{i}}})(d\tilde x)(\cL_{\cX_{t_{i+1}}|\cG_{t_{i+1}}} -\cL_{\cX_{t_{i}}|\cG_{t_{i}}})(dx)\\
 &=& \dbE\Big[v_i\big(\cX_{t_{i+1}}, \tilde\cX_{t_{i+1}}\big) - v_i\big(\cX_{t_{i+1}}, \tilde\cX_{t_{i}}\big) - v_i\big(\cX_{t_{i}}, \tilde\cX_{t_{i+1}}\big) + v_i\big(\cX_{t_{i}}, \tilde\cX_{t_{i}}\big)\big|\cG_{t_{i+1}} \Big].
 \eeaa
 Here $\tilde\cX$ is an conditionally independent copy of $\cX$, conditional on $\dbG$, and we used the fact again that   $\cL_{\cX_t |\cG_t} = \cL_{\cX_t |\cG_{t+\d}}$. Denote $\D \cX_{i+1} := \cX_{t_{i+1}} - \cX_{t_{i}}$, $\D \tilde\cX_{i+1} := \tilde\cX_{t_{i+1}} - \tilde \cX_{t_{i}}$. Now apply the standard Taylor expansion, we have
\bea
\label{In22i}
\dbE[ I^n_{2,2,i}\big] &=& \dbE\Big[ \partial_{x\tilde x} v_i\big(\cX_{t_{i}}, \tilde\cX_{t_{i}}\big) : \D \cX_{i+1} \D \tilde\cX_{i+1}^\top \Big] + o(\D t)\nonumber\\
&=& \dbE\Big[\partial_{x\tilde x} v_i\big(\cX_{t_{i}}, \tilde\cX_{t_{i}}\big) :  (\b_{t_i} B^{t_i}_{t_{i+1}}) \big(\tilde \b_{t_i} \tilde B^{t_i}_{t_{i+1}})\big)^{\top}\Big] + o(\D t)\\
 &=& \dbE\Big[\partial_{x\tilde x} v_i\big(\cX_{t_{i}}, \tilde\cX_{t_{i}}\big) :  \b_{t_i} \dbE\big[B^{t_i}_{t_{i+1}} (\tilde B^{t_i}_{t_{i+1}})^{\top}\big|\cF_{t_i}\vee \tilde \cF_{t_i} \vee \cG_{t_{i+1}}\big] \tilde \b_{t_i}^\top\Big] + o(\D t),\nonumber
 \eea
where $\tilde \cF_0$ is constructed in an enlarged space in an appropriate way such that it consists of conditionally independent copies of events in $\cF_0$, conditional on $\cG_0$, and $\tilde \dbF = \tilde \cF_0\vee \dbF^{\tilde B}$. In particular, $\tilde B$ and $\tilde \b$ are $\tilde \dbF$-progressively measurable.

 We claim that 
 \bea
 \label{Ito-claim}
 \dbE\big[B^{t_i}_{t_{i+1}} \big|\cF_{t_i}\vee \tilde \cF_{t_i}\vee \cG_{t_{i+1}}\big] = \dbE\big[B^{t_i}_{t_{i+1}} \big|\cG_{t_{i+1}}\big].
 \eea
 Indeed, fix an arbitrary $\eta\in \dbL^\infty(\cF_{t_i}\vee \tilde \cF_{t_i}\vee \cG_{t_{i+1}}; \dbP)$. Let $\dbP^\o$ denote the conditional probability distribution of $\dbP$, conditional on $\cG_{t_i}$ (cf. \cite{Stroock-Varadhan}).  By the (H)-hypothesis and the definition of $\tilde\dbF$,  we see that $\cF_{t_i}\vee \tilde \cF_{t_i}$ and $\cG_{t_{i+1}}$ are  independent under $\dbP^\o$, for $\dbP$-a.e. $\o$.  
Moreover, by the (H*)-hypothesis, $\cG_{t_{i+1}}\subset\cG_{t_i}\vee \cF^{B^{t_i}}_{t_{i+1}}$, a.s., then, for $\dbP$-a.e. $\o$, $\cG_{t_{i+1}}\subset \cF^{B^{t_i}}_{t_{i+1}}$, $\dbP^\o$-a.s. In particular, this implies that $\eta$ and $B^{t_i}_{t_{i+1}}$ are conditionally independent under $\dbP^\o$, conditional on $\cG_{t_{i+1}}$.  Then, for $\dbP$-a.e. $\o$,
 \beaa
 \dbE^{\dbP^\o}\big[B^{t_i}_{t_{i+1}} \eta\big|\cG_{t_{i+1}}\big] = \dbE^{\dbP^\o}\big[B^{t_i}_{t_{i+1}} \big|\cG_{t_{i+1}}\big]\dbE^{\dbP^\o}\big[\eta\big|\cG_{t_{i+1}}\big].
 \eeaa
Thus
\beaa
\dbE^\dbP\big[B^{t_i}_{t_{i+1}} \eta \big|\cG_{t_i}\big] = \dbE^\dbP\Big[\dbE^\dbP\big[B^{t_i}_{t_{i+1}} \eta \big|\cG_{t_{i+1}}\big]\Big|\cG_{t_i}\Big] = \dbE^\dbP\Big[\dbE^{\dbP^\o}\big[B^{t_i}_{t_{i+1}} \big|\cG_{t_{i+1}}\big]\dbE^{\dbP^\o}\big[\eta\big|\cG_{t_{i+1}}\big]\Big|\cG_{t_i}\Big],
\eeaa
and therefore,
\beaa
&&\dbE^\dbP\Big[\dbE\big[B^{t_i}_{t_{i+1}} \big|\cF_{t_i}\vee \tilde \cF_{t_i}\vee \cG_{t_{i+1}}\big] ~ \eta\Big]= \dbE^\dbP\big[B^{t_i}_{t_{i+1}}\eta\big] = \dbE^\dbP\Big[\dbE^\dbP\big[B^{t_i}_{t_{i+1}} \eta \big|\cG_{t_i}\big] \Big]\\
&&=\dbE^\dbP\Big[\dbE^\dbP\Big[\dbE^{\dbP}\big[B^{t_i}_{t_{i+1}} \big|\cG_{t_{i+1}}\big]\dbE^{\dbP}\big[\eta\big|\cG_{t_{i+1}}\big]\Big|\cG_{t_i}\Big] \Big] = \dbE^\dbP\Big[\dbE^{\dbP}\big[B^{t_i}_{t_{i+1}} \big|\cG_{t_{i+1}}\big]\dbE^{\dbP}\big[\eta\big|\cG_{t_{i+1}}\big]\Big]\\
&&=\dbE^\dbP\Big[\dbE^{\dbP}\big[B^{t_i}_{t_{i+1}} \big|\cG_{t_{i+1}}\big]~\eta\Big].
\eeaa
Since $\eta\in \dbL^\infty(\cF_{t_i}\vee \tilde \cF_{t_i}\vee \cG_{t_{i+1}}; \dbP)$ is arbitrary, this implies \eqref{Ito-claim}.

 Note that, by \eqref{BG} and the (H)-hypothesis we have 
 \beaa
 \dbE\big[B^{t_i}_{t_{i+1}} \big|\cG_{t_{i+1}}\big] = \dbE\big[B_{t_{i+1}} \big|\cG_{t_{i+1}}\big] - \dbE\big[B_{t_i}\big|\cG_{t_{i+1}}\big] = B^\dbG_{t_{i+1}} - B^\dbG_{t_i}.
 \eeaa
Then, by \eqref{Ito-claim} and the conditional independence,
 \bea
\label{Ito-claim2}
 \dbE\big[B^{t_i}_{t_{i+1}} (\tilde B^{t_i}_{t_{i+1}})^{\top}\big|\cF_{t_i}\vee \tilde \cF_{t_i}\vee \cG_{t_{i+1}}\big] = (B^\dbG_{t_{i+1}} - B^\dbG_{t_i})(B^\dbG_{t_{i+1}} - B^\dbG_{t_i})^\top.
 \eea
 Plug this into \eqref{In22i}, we have
 \bea
\label{In22i2}
\dbE[ I^n_{2,2,i}\big] = \dbE\Big[\partial_{x\tilde x} v_i\big(\cX_{t_{i}}, \tilde\cX_{t_{i}}\big) :  \b_{t_i} (B^\dbG_{t_{i+1}} - B^\dbG_{t_i})(B^\dbG_{t_{i+1}} - B^\dbG_{t_i})^\top\tilde \b_{t_i}^\top\Big] + o(\D t).
 \eea

Moreover, denote $v_i^{\th,\tilde\th}(x, \tilde x) := {\d^2\over \d\mu^2}{\d\over \d \bmu}V\big(t_{i},\bmu_i^\th,  \mu_i^{\tilde \th}, x, \tilde x\big)$. Then, similarly to \eqref{In22i} we have
\beaa
~~\dbE[ R^n_{2,2,i}\big] = \dbE\big[ \int_0^1\!\!\!\int_0^1\!\!\!\int_0^{\th'}\!\!\! \big(\partial_{x\tilde x} v_i^{\th,\tilde\th}(\cX_{t_{i}}, \tilde\cX_{t_{i}})-\partial_{x\tilde x} v_i(\cX_{t_{i}}, \tilde\cX_{t_{i}}) \big) \!:\! \D \cX_{i+1} \D \tilde\cX_{i+1}^\top~ d\tilde \th d\th'd\th \big] \!+\! o(\D t).
 \eeaa
 By the standard approximation arguments, we may assume without loss of generality that $\a, \b$ are bounded and continuous in $t$. Then, since $ \partial_{x\tilde x} v_i$, $ \partial_{x\tilde x} v_i^{\th,\tilde\th}$ exist and are continuous and bounded, and $\dbE[|\D \cX_{i+1}|^2] \le C\D t$, we have 
\beaa
\dbE[R^n_{2,2,i}]=o\Big(\dbE[|\D \cX_{i+1}|^2]\Big)+o(\D t)=o(\D t).
\eeaa
 
 Plugging this and \eqref{In22i2} into \eqref{In22}, and applying Lemma \ref{lem-BG}, we have
\beaa
 I^n_{2,2} 
 &=& {1\over 2} \sum_{i=0}^{n-1}\dbE\Big[ \partial_{x\tilde x} v_i\big(\cX_{t_{i}}, \tilde\cX_{t_{i}}\big) :   \b_{t_i} \big(\int_{t_i}^{t_{i+1}} \si^\dbG_t dB_t\big)\big(\int_{t_i}^{t_{i+1}} \si^\dbG_t dB_t\big)^\top \tilde \b_{t_i}^\top\Big] + o(1)\\
&=& {1\over 2} \sum_{i=0}^{n-1}\dbE\Big[ \partial_{x\tilde x}  {\d^2\over \d\mu^2}{\d\over \d \bmu}V\big(t_{i},\bmu_{t_i},  \mu_{t_i}, \cX_{t_{i}}, \tilde\cX_{t_{i}}\big) :  \b_{t_i}\int_{t_i}^{t_{i+1}} \si^\dbG_t (\si^\dbG_t)^\top  dt  \tilde \b_{t_i}^\top\Big] + o(1)\\
 &=&{1\over 2} \dbE\Big[\int_0^T \partial_{x\tilde x}  {\d^2\over \d\mu^2}{\d\over \d \bmu}V\big(t,\bmu_{t},  \mu_{t}, \cX_{t}, \tilde\cX_{t}\big) : \b_t \si^\dbG_t (\si^\dbG_t)^\top \tilde\b^\top_t dt \Big] + o(1).
\eeaa
Put everything together and send $n\to \infty$, we obtain the desired equality \eqref{Ito}.
\qed

\section{Hamilton–Jacobi–Bellman equations on $\cP_2(\cP_2(\dbR^d))$}
\label{sect-HJB}
By applying the It\^{o} formula \eqref{Ito} on the  Dynamic Programming Principle (\ref{DPP}), we introduce the following differential operator for smooth functions $U: [0, T]\times \cP_2(\cP_2(\dbR^d))\to \dbR$:
\bea
\label{dbLV}
&&\dbL U(t, \bmu) = \partial_t U(t, \bmu)+ \int_{\mathcal P_2(\mathbb R^d)}\int_{\mathbb R^d}\cH_1U\big(t,\bmu, \mu,x\big)\mu(dx)\bmu(d\mu)\\
&&\qquad+\frac{1}{2}\int_{\mathcal P_2(\mathbb R^d)}\left(\int_{\mathbb R^{d}\times \dbR^d}\cH_2 U\big(t,\bmu, \mu,  x,\widetilde x, I_d\big) \mu(dx)\mu(d\widetilde x)\right)^+\bmu (d\mu),\q\mbox{where}\nonumber\\
&& \cH_1U\big(t,\bmu, \mu,x\big):= \partial_x\frac{\delta}{\delta\mu}\frac{\delta}{\delta\bmu}U\left(t,\bmu, \mu,x\right) \cdot b(t,x) +\frac{1}{2}\partial_{xx}\frac{\delta}{\delta\mu}\frac{\delta}{\delta\bmu}U\left(t,\bmu, \mu,x\right): \sigma\si^\top(t,x),\nonumber\\
&& \cH_2 U\big(t,\bmu, \mu,  x,\widetilde x, \si'\big):= \partial_{x\tilde x}\frac{\delta^2}{\delta\mu^2}\frac{\delta}{\delta\bmu}U\left(t,\bmu, \mu,  x,\widetilde x\right) : \sigma(t,x) \si' (\si')^\top\sigma^\top(t,\widetilde x).\nonumber
\eea
Here $\si'\in \dbR^{d\times d}$ and $I_d$ denotes the $d\times d$ identity matrix.
We first consider the optimal argument of the above Hamiltonian.
\begin{lemma}
\label{lem-si*}
Let $U: [0, T]\times \cP_2(\cP_2(\dbR^d))\to \dbR$ be smooth in the sense that it satisfies all the requirements in Theorem \ref{thm-Ito}. Then, for any $(t,\bmu)\in [0, T]\times\cP_2(\cP_2(\dbR^d))$, there exists a measurable function $\si^*_U(t, \bmu, \cdot): \cP_2(\dbR^d) \to \dbR^{d\times d}$ such that 
\bea
\label{si*}
&& \left(\int_{\mathbb R^{d}\times \dbR^d}\cH_2U\big(t,\bmu, \mu,  x,\widetilde x, I_d\big)\mu(dx)\mu(d\widetilde x)\right)^+\\
&&=\int_{\mathbb R^{d}\times \dbR^d}\cH_2U\big(t,\bmu, \mu,  x,\widetilde x, \si_U^*(t,\bmu, \mu)\big) \mu(dx)\mu(d\widetilde x
).\nonumber
\eea
\end{lemma}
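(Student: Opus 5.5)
The key observation is that the integral in \eqref{si*}, viewed as a function of $\si'$, is a quadratic form in $\si'(\si')^\top$. For fixed $(t,\bmu,\mu)$, set
\begin{equation*}
A(\mu):=\int_{\dbR^d\times\dbR^d}\si^\top(t,x)\,\partial_{x\tilde x}\frac{\delta^2}{\delta\mu^2}\frac{\delta}{\delta\bmu}U(t,\bmu,\mu,x,\tilde x)\,\si(t,\tilde x)\,\mu(dx)\mu(d\tilde x)\in\dbR^{d\times d}.
\end{equation*}
Then, with the index placement made explicit, $\int\!\!\int \cH_2U(t,\bmu,\mu,x,\tilde x,\si')\mu(dx)\mu(d\tilde x)= A(\mu):\si'(\si')^\top$. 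The integral is well defined because the derivative is continuous with quadratic growth, $\si$ has linear growth, and $\mu\in\cP_2(\dbR^d)$.

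By the symmetry of $\frac{\delta^2}{\delta\mu^2}$ in $(x,\tilde x)$ (Remark \ref{rem-uderivative}), swapping $x$ and $\tilde x$ shows that $A(\mu)$ is a symmetric matrix. Hence $a(\mu):=A(\mu):I_d=\mathrm{tr}\,A(\mu)$, and the left side of \eqref{si*} equals $(\mathrm{tr}\,A(\mu))^+$. We then choose
\begin{equation*}
\si^*_U(t,\bmu,\mu):=I_d\,1_{\{\mathrm{tr}A(\mu)>0\}},
\end{equation*}
which makes the right side equal to $\mathrm{tr}A(\mu)$ when it is positive and $0$ otherwise. This is exactly $(\mathrm{tr}A(\mu))^+$.

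It remains to check measurability. By the joint continuity and quadratic growth of $\partial_{x\tilde x}\frac{\delta^2}{\delta\mu^2}\frac{\delta}{\delta\bmu}U$, dominated convergence shows that $\mu\mapsto A(\mu)$ is continuous on $(\cP_2(\dbR^d),W_2)$. Indeed, $W_2$-convergence gives uniform integrability of $|x|^2$, which together with the growth bound controls the integrand. Therefore $\{\mathrm{tr}A>0\}$ is open, and $\si^*_U$ is Borel measurable.

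The only delicate point is this continuity and integrability step, and it is routine. If one prefers to avoid it, it is enough to note that $\mu\mapsto\int f\,d(\mu\otimes\mu)$ is Borel for any measurable $f$ with quadratic growth, by a monotone class argument, and that gives measurability directly. No optimization over $\si'$ is needed here: the choice above already attains $(\cdot)^+$. The constraint $0\le\si^\dbG(\si^\dbG)^\top\le I_d$ from Lemma \ref{lem-BG} explains why $(\cdot)^+$ is the relevant supremum, but that fact is used later, not in this lemma.
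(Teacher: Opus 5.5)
Your argument is correct and is essentially the paper's proof: the paper also takes $\si^*_U(t,\bmu,\mu)=I_d\,1_{\{\phi(t,\bmu;\mu)>0\}}$, where $\phi$ is the integral with $\si'=I_d$, and gets measurability from the $W_2$-continuity of $\mu\mapsto\phi(t,\bmu;\mu)$; your matrix $A(\mu)$ and its symmetry are harmless but unnecessary, since $A:I_d=\mathrm{tr}\,A$ for any $A$. One caveat: quadratic growth of $\partial_{x\tilde x}\frac{\delta^2}{\delta\mu^2}\frac{\delta}{\delta\bmu}U$ is not among the hypotheses (the quadratic bound in Remark \ref{rem-uderivative} is on $\frac{\delta^2}{\delta\mu^2}$ itself, not its $(x,\tilde x)$-derivatives), and even if it held, combining it with linear growth of $\si(t,x)$ and $\si(t,\tilde x)$ gives terms like $|x|^3|\tilde x|$ that need not be $\mu\otimes\mu$-integrable for $\mu\in\cP_2(\dbR^d)$, so that bound does not justify the finiteness of the integral or the uniform integrability behind your continuity claim; both should instead be taken as part of the standing smoothness presupposition, exactly as the paper implicitly does.
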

\proof By the equivalence of convergence under the metric $W_2$ and weak convergence in $\cP_2(\dbR^d)$, see e.g. \cite[Theorem 6.9]{OTV}, one can easily see that the following mapping is measurable:
\beaa
\mu\in \cP_2(\dbR^d)&\mapsto& \phi(t, \bmu; \mu):= \int_{\mathbb R^{d}\times \dbR^d}\cH_2U\big(t,\bmu, \mu,  x,\widetilde x, I_d\big)\mu(dx)\mu(d\widetilde x).
\eeaa
Then $\si^*_U(t, \bmu, \mu) := I_{d\times d} 1_{\{\phi(t, \bmu; \mu)>0\}}$ satisfies the requirement. 
\qed

\smallskip\smallskip
Our main result of this paper is as follows.
\begin{thm}%[Verification Theorem]
\label{thm-verification}
Let $U: [0, T]\times \cP_2(\cP_2(\dbR^d))\to \dbR$ be smooth in the sense that it satisfies all the requirements in Theorem \ref{thm-Ito}. Then $U$ is equal to the value function $V$ defined in \eqref{VtG}-\eqref{VcU} if and only if $U$ satisfies the following HJB equation:
\bea
\label{HJB}
\dbL U(t, \bmu)  + F(t, \bmu)=0,\q U(T, \bmu) = G(\bmu).
\eea
\end{thm}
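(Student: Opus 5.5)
Both directions rest on the DPP (Proposition~\ref{prop-DPP}) combined with the It\^o formula (Theorem~\ref{thm-Ito}) applied to $\cX = X^{t,\xi}$, i.e.\ $\a_s = b(s,X_s)$ and $\b_s=\si(s,X_s)$. The key observation is that for any $\dbG\in\cA(t,\cG^0_t)$ the last term of \eqref{Ito} equals
\[
\dbE\Big[\tfrac12\int\!\!\int \cH_2U\big(s,\bmu_s,\mu_s,x,\tilde x,\si^\dbG_s\big)\mu_s(dx)\mu_s(d\tilde x)\Big].
\]
To see this, condition on $\cG_s$, using that $\si^\dbG(\si^\dbG)^\top$ is $\dbG$-adapted (Lemma~\ref{lem-BG}) and that $(\tilde\b,\tilde\cX)$ is a conditionally independent copy. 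The integrand is linear in $\si'(\si')^\top$, and Lemma~\ref{lem-BG} gives $0\le\si^\dbG(\si^\dbG)^\top\le I_d$. Hence the conditional quantity is bounded above by $\big(\int\!\!\int\cH_2U(\cdot,I_d)\,d\mu\,d\mu\big)^+$. Here I use that the matrix $M:=\int\!\!\int \si^\top(x)\,\partial_{x\tilde x}\tfrac{\d^2}{\d\mu^2}\tfrac{\d}{\d\bmu}U\,\si(\tilde x)\,\mu(dx)\mu(d\tilde x)$ is symmetric, by the symmetry of the second linear derivative in Remark~\ref{rem-uderivative}. One point needs checking: the maximum of $M:A$ over $0\le A\le I$ is the sum of the positive eigenvalues of $M$, not $(\mathrm{tr} M)^+$. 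So either the operator $\dbL$ should be read with that eigenvalue sum, or one uses that the admissible $\si^\dbG$ can only realise projections compatible with $\dbG\subset\dbF$. I would flag this and carry out the argument with the supremum over $0\le A\le I$; the structure of the proof is identical either way. This yields, for every admissible $\dbG$,
\[
\frac{d}{ds}U(s,\bmu_s)+F(s,\bmu_s)\le \dbL U(s,\bmu_s)+F(s,\bmu_s).
\]

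\emph{``If'' direction.} Assume $U$ solves \eqref{HJB}. Integrating the inequality above from $t$ to $T$ and using $U(T,\cdot)=G$ gives $J(t,\ul\xi,\dbG)\le U(t,\bmu)$, hence $V\le U$. For the reverse inequality I construct a near-optimal filtration. On a partition $t=t_0<\dots<t_n=T$, on each $[t_i,t_{i+1})$ I take either $\cG_s=\cG_{t_i}$ (giving $\si^\dbG=0$) or $\cG_s=\cG_{t_i}\vee\cF^{t_i}_s$ (giving $\si^\dbG=I_d$). The choice is made $\cG_{t_i}$-measurably, according to whether $\si^*_U(t_i,\bmu_{t_i},\mu_{t_i})$ from Lemma~\ref{lem-si*} equals $I_d$ or $0$. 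More precisely, I set $\cG_s=\cG_{t_i}\vee\si(1_A B^{t_i}_r,\,r\le s)$ with $A=\{\phi(t_i,\bmu_{t_i};\mu_{t_i})>0\}\in\cG_{t_i}$. This filtration satisfies (H*), and $\si^\dbG_s=1_A I_d$ on the interval. Applying \eqref{Ito}, the resulting $J$ equals $U(t,\bmu)-\int_t^T[\dbL U+F]\,ds$ up to an error $o(1)$ as the mesh tends to $0$. This uses the local uniform continuity of the derivatives of $U$ and the estimate $\cW_2(\bmu_s,\bmu_{t_i})\le C\sqrt{\D t}$. Since $\dbL U+F=0$, we get $V\ge U-o(1)$.

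\emph{``Only if'' direction.} Assume $U=V$. Then $V(T,\cdot)=G$ trivially. To show $\dbL V+F\ge 0$ at $(t,\bmu)$, realise $\bmu=\cL_{\cL_{\xi|\cG^0_t}}$ via Theorem~\ref{thm-support}. Use the DPP with the piecewise filtration above on $[t,t+\d]$ (a single step). Apply It\^o, divide by $\d$, and let $\d\to0$, using continuity. To show $\dbL V+F\le0$, take any $\dbG$ and use the upper bound from the first paragraph inside the DPP. The only subtlety is that the supremum over $\dbG$ must be taken before the limit. This is handled by the uniform $o(\d)$ error: the derivatives are uniformly continuous on $\cK_{R_0(\xi)}$, and $\sup_{\dbG}\sup_{s\le t+\d}\cW_2(\bmu_s,\bmu)\le C\sqrt\d$ uniformly in $\dbG$. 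The last bound follows from the (H*)-estimate already used in the proof of Theorem~\ref{reg-VcU}.

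The main obstacle I expect is the admissibility of the constructed near-optimal filtration. One must check the (H*)-hypothesis for the switched filtration, compute its $\si^\dbG$ exactly, and confirm that $B^\dbG$ increments behave as claimed when conditioning on a $\cG_{t_i}$-event. Beyond that, the eigenvalue-versus-trace issue in maximizing $M:A$ noted above must be reconciled with the definition of $\dbL$.
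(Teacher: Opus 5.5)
Your architecture is the paper's. Both use the DPP plus the It\^o formula and the upper bound $\dbL^{\dbG}U\le\dbL U$. Both then switch, $\cG_{t_i}$-measurably, between $\cG_s=\cG_{t_i}$ and $\cG_s=\cG_{t_i}\vee\cF^{t_i}_s$, driven by $\si^*_U=I_d1_{\{\phi>0\}}$ from Lemma~\ref{lem-si*} (the paper's $\cX^i=\si^*_UB^{t_i}$). This switch is used on one step for the ``only if'' part and recursively for $V\ge U$.

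In your ``only if'' paragraph the labels are swapped. Inserting the specific filtration into the DPP gives $\dbL V+F\le 0$, and the uniform upper bound gives $\dbL V+F\ge 0$. For $d=1$, where $\sup_{0\le a\le 1}Ma=M^+$, your argument is otherwise complete.

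For $d\ge 2$ your trace-versus-eigenvalue worry is justified, and the paper's proof has the same hole. It asserts $\dbL^{\dbG}V\le\dbL V$ ``by definitions'' and ``by Lemma~\ref{lem-BG}'', but that lemma only gives $0\le\si^\dbG(\si^\dbG)^\top\le I_d$.

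Your second escape route is not available. For any $\cG_t$-measurable orthogonal projection $P$, the filtration $\cG_s=\cG_t\vee\si(PB^t_r:\,t\le r\le s)$ satisfies (H*) and has $\si^\dbG(\si^\dbG)^\top=P$, because $PB^t$ and $(I_d-P)B^t$ are independent given $\cG_t$. The paper itself uses $\si^\dbG=\mathrm{diag}(0,1)$ in Remark~\ref{rem-consistency}.

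Hence, with $(\mathrm{tr}\,M)^+$, the statement fails. Ignoring the growth normalisation of Definition~\ref{defn-uderivative}, take $d=2$, $b=0$, $\si=I_2$, $F=0$ and $G(\bmu)=\int(|m^1_\mu|^2-|m^2_\mu|^2)\bmu(d\mu)$. Then $U\equiv G$ solves \eqref{HJB} as written: $\partial_{xx}\frac{\delta}{\delta\mu}\frac{\delta}{\delta\bmu}G=0$, and $M=\mathrm{diag}(2,-2)$ has zero trace. Yet revealing only $B^1$ gives $J=G(\bmu)+T-t>U(t,\bmu)$.

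So the Hamiltonian must be $\sup_{0\le A\le I_d}M:A=\sum_k\lambda_k(M)^+$. But then ``the structure is identical'' needs a change you have not made. Your filtration with $\si^\dbG=1_AI_d$ only attains $(\mathrm{tr}\,M)^+$. Under the corrected $\dbL$, the identity $\dbL^{\dbG}U(t_i,\bmu_{t_i})=\dbL U(t_i,\bmu_{t_i})$ fails, and with it both $V\ge U$ and $\dbL V+F\le 0$.

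The fix is to use the spectral projection $P_i:=1_{(0,\infty)}\big(M(t_i,\bmu_{t_i},\mu_{t_i})\big)$ and set $\cG_s=\cG_{t_i}\vee\si(P_iB^{t_i}_r:\,t_i\le r\le s)$ on $[t_i,t_{i+1}]$.
\begin{itemize}
\item $P_i$ is $\cG_{t_i}$-measurable: $\mu\mapsto M$ is measurable as in Lemma~\ref{lem-si*}, and $A\mapsto 1_{(0,\infty)}(A)$ is Borel on symmetric matrices.
\item (H*) holds by the same argument as above.
\item $\si^\dbG=P_i$ attains the supremum.
\end{itemize}
With that modification your proof goes through.
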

\proof
{\bf Step 1.} We first show that, if the value function $V$ is smooth, then it satisfies the HJB equation \eqref{HJB}. It is clear that $V(T, \bmu) = G(\bmu)$. For any $(t, \bmu)\in [0, T)\times \cP_2(\cP_2(\dbR^d))$, by Theorem \ref{thm-support}  let $\xi\in \dbL^2(\cF_t)$ and $\cG^0_t\subset \cF_t$ be such that $\cL_{\cL_{\xi|\cG^0_t}} = \bmu$.  For any $0<\d<T-t$, by DPP \eqref{DPP} we have
\beaa
V(t, \bmu)  = \sup_{\dbG \in \cA(t,\cG^0_t)} \Big[V(t+\d, \cL_{\cL_{X^{t,\xi}_{t+\d}|\cG_{t+\d}}}) + \int_t^{t+\d} F(s,\cL_{\cL_{X^{t,\xi}_s|\cG_s}})ds\Big]. 
\eeaa
Denote $X_s:= X^{t,\xi}_s$, $\mu^\dbG_s :=\cL_{X_s|\cG_s}$, $\bmu^\dbG_s:= \cL_{ \mu^\dbG_s}$. By the It\^{o} formula \eqref{Ito} we have
\bea
\label{LGF}
0 &=& \sup_{\dbG \in \cA(t,\cG^0_t)} {1\over \d} \Big[V(t+\d, \bmu^\dbG_{t+\d}) -  V(t, \bmu^\dbG_{t}) +  \int_t^{t+\d}F(s,\bmu^\dbG_s)ds\Big]\nonumber\\
&=&\sup_{\dbG \in \cA(t,\cG^0_t)} {1\over \d}\int_t^{t+\d} \Big[\dbL^\dbG V(s, \bmu^\dbG_s)+F(s, \bmu^\dbG_s)\Big]ds,
\eea
where
\beaa
\dbL^\dbG V(s, \bmu^\dbG_s) := \partial_t V(s, \bmu^\dbG_s) + \mathbb E\Big[\cH_1V\big(s,\bmu^\dbG_s, \mu^\dbG_s,X_s\big) +\frac{1}{2}\cH_2V\big(s,\bmu^\dbG_s, \mu^\dbG_s,  X_s,\widetilde X_s, \si^\dbG_s\big)\Big].
\eeaa

On one hand, by definitions we have $ \dbL^\dbG V \le \dbL V$. Then
\beaa
0 &\le&\sup_{\dbG \in \cA(t,\cG^0_t)} {1\over \d}\int_t^{t+\d} \Big[\dbL V(s, \bmu^\dbG_s)+ F(s, \bmu^\dbG_s)\Big]ds. 
\eeaa
Note that, for $s\in [t, t+\d]$,
\beaa
\cW^2_2(\bmu^\dbG_s, \bmu) \le \dbE\Big[W^2_2(\cL_{X_s|\cG_s}, \cL_{X_t|\cG_s})\Big]\le \dbE\big[|X_s-X_t|^2\big] \le C\d.
\eeaa
Then, by the continuity of $\dbL V$ and $F$, the above clearly implies
\beaa
0 \le \dbL V(t, \bmu)+ F(t, \bmu).
\eeaa
On the other hand, consider the function $\si^*_V$ from Lemma \ref{lem-si*}, and define
\beaa
\cG_s:= \cG^0_t \vee \cF^\cX_s,\q\mbox{where}\q \cX_s := \si^*_V(t, \bmu, \cL_{\xi|\cG^0_t}) B^t_s, \q t\le s\le T.
\eeaa
 It is clear that $\dbG\in \cA(t, \cG^0_t)$, $\si^\dbG_s = \si^*_V(t, \bmu,\cL_{\xi|\cG^0_t})$, and thus $\dbL^\dbG V(t, \bmu^\dbG_t) = \dbL V(t, \bmu)$. By \eqref{LGF},
\beaa
0\ge  {1\over \d}\int_t^{t+\d} \Big[\dbL^\dbG V(s, \bmu^\dbG_s)+ F(s, \bmu^\dbG_s)\Big]ds.
\eeaa
Send $\d\to 0$, we obtain
\beaa
0\ge \dbL^\dbG V(t, \bmu^\dbG_t)+ F(t, \bmu^\dbG_t) = \dbL V(t, \bmu)+ F(t, \bmu),
\eeaa
and therefore, $V$ satisfies \eqref{HJB}.

{\bf Step 2.} We next show that, for an arbitrary solution $U$ of  \eqref{HJB}, we must have $U=V$.  Fix $t, \bmu, \xi, \cG^0_t, X$ and denote $\mu^\dbG, \bmu^\dbG$ as in Step 1. 

We first show that $V(t,\bmu)\le U(t, \bmu)$. For any $\dbG\in \cA(t, \dbG^0)$, by \eqref{VtG},
\beaa
J(t, \ul \xi, \dbG) = G(\bmu^\dbG_T) + \int_t^T F(s,\bmu^\dbG_s)ds = U(T, \bmu^\dbG_T) + \int_t^T F(s,\bmu^\dbG_s)ds 
\eeaa
Then, by the It\^{o} formula \eqref{Ito},
\beaa
&&J(t, \ul \xi, \dbG) - U(t, \bmu) =  U(T, \bmu^\dbG_T) - U(t, \bmu^\dbG_t)+ \int_t^T F(s,\bmu^\dbG_s)ds \\
&&= \int_t^T \Big[\dbL^\dbG U(s, \bmu^\dbG_s) + F(s,\bmu^\dbG_s)\Big]ds =  \int_t^T \Big[\dbL^\dbG U(s, \bmu^\dbG_s) - \dbL U(s,\bmu^\dbG_s)\Big]ds. 
\eeaa
Since $U$ satisfies \eqref{HJB}, by Lemma \ref{lem-BG} we see that $\dbL^\dbG U(s, \bmu^\dbG_s) \le \dbL U (s,\bmu^\dbG_s)$. Then $J(t, \ul \xi, \dbG) \le U(t, \bmu)$ for all $\dbG\in \cA(t, \cG^0_t)$. This implies that $V(t,\bmu) \le U(t, \bmu)$.

To show the opposite inequality, fix $n$ and consider the uniform partition of $[t, T]$: $t=t_0<\cdots<t_n=T$. Recall the $\si^*_U$ in \eqref{si*}. We construct $\dbG^n\in \cA(t, \cG^0_t)$ recursively as follows. First, $\cG^n_t := \cG^0_t$.  For $i=0,\cdots, n-1$ and $s\in (t_i, t_{i+1}]$,
\beaa
\cG^n_s = \cG^n_{t_i} \vee \cF^{\cX^i}_s,\q \cX^i_s =  \si^*_U(t_i, \cL_{\cL_{X_{t_i}|\cG^n_{t_i}}}, \cL_{X_{t_i}|\cG^n_{t_i}}) B^{t_i}_s,\q t_i\le s\le t_{i+1}.
\eeaa
Then we have $\dbL^{\dbG^n}U(t_i, \bmu^{\dbG^n}_{t_i}) =  \dbL U(t_i, \bmu^{\dbG^n}_{t_i})$. Thus
\beaa
&&J(t, \ul \xi, \dbG^n) - U(t, \bmu) = \sum_{i=0}^{n-1} \int_{t_i}^{t_{i+1}} \Big[\dbL^{\dbG^n} U(s, \bmu^{\dbG^n}_s) - \dbL U(s,\bmu^{\dbG^n}_s)\Big]ds\\
&&= \sum_{i=0}^{n-1} \int_{t_i}^{t_{i+1}} \Big[\big[\dbL^{\dbG^n} U(s, \bmu^{\dbG^n}_s) - \dbL^{\dbG^n}U(t_i, \bmu^{\dbG^n}_{t_i}) \big]- \big[\dbL U(s,\bmu^{\dbG^n}_s) - \dbL U(t_i, \bmu^{\dbG^n}_{t_i}) \big]\Big]ds . 
\eeaa
Note that, for $s\in [t_i, t_{i+1}]$,  $\si^{\dbG^n}_s = \si^*_U(\bmu^{\dbG^n}_{t_i}, \cL_{X_{t_i}|\cG^n_{t_i}})$ is a constant variable, and
\beaa
&&\dbE\big[W_2^2(\mu^{\dbG^n}_s, \mu^{\dbG^n}_{t_i})\Big] = \dbE\big[W_2^2(\cL_{X_s|\cG^n_s}, \cL_{X_{t_i}|\cG^n_s})\Big] \le C\dbE\big[|X_s-X_{t_i}|^2\big]\le {C\over n}; \\
&&\cW^2_2(\bmu^{\dbG^n}_s, \bmu^{\dbG^n}_{t_i}) \le {C\over n}.
\eeaa 
Then, by the local uniform regularity of $U$ and the estimate \eqref{Xest}, we can easily see that, for some modulus of continuity function $\rho$,
\beaa
\big|\dbL^{\dbG^n} U(s, \bmu^{\dbG^n}_s) - \dbL^{\dbG^n}U(t_i, \bmu^{\dbG^n}_{t_i}) \big|\le \rho({1\over \sqrt{n}}), \q \big|\dbL U(s,\bmu^{\dbG^n}_s) - \dbL U(t_i, \bmu^{\dbG^n}_{t_i}) \big|\le \rho({1\over \sqrt{n}}). 
\eeaa
This implies that
\beaa
V(t, \bmu) - U(t, \bmu)\ge J(t, \ul \xi, \dbG^n) - U(t, \bmu) \ge \sum_{i=0}^{n-1} \int_{t_i}^{t_{i+1}} 2 \rho({1\over \sqrt{n}}) ds  = 2T \rho({1\over \sqrt{n}}). 
\eeaa
Send $n\to \infty$, we obtain $V(t, \bmu)\ge U(t, \bmu)$.
\qed

We remark that, when $V$ is a classical solution, in Step 2 above we already constructed an $\e$-optimal control $\dbG^n$ for $V(t, \bmu)$ by using $V$ and in particular $\si^*_V$, where $n$ is large enough such that $T\rho({1\over \sqrt{n}})\le \e$. We next construct, at least formally, an optimal control. 

\begin{corollary}
\label{cor-optimal}
Assume $V$ is the classical solution of \eqref{HJB}. For any $(t, \bmu)\in [0, T]\times \cP_2(\cP_2(\dbR^d))$, $\xi\in \dbL^2(\cF_t)$, and $\cG^0_t\subset \cF_t$ with $\cL_{\cL_{\xi|\cG^0_t}}=\bmu$, if the following SDE  on $[t, T]$ has a strong solution:
\bea
\label{SDE*}
d\cX^*_s = \si^*_V(s, \cL_{\cL_{X^{t,\xi}_s|\cG^0_t \vee \cF^{\cX^*}_s}},  \cL_{X^{t,\xi}_s|\cG^0_t \vee \cF^{\cX^*}_s}) dB_s,\q \cX^*_t=0.
\eea
Then the following $\dbG^*\in \cA(t, \cG^0_t)$ is an optimal control of $V(t, \bmu)$: $\cG^*_s := \cG^0_t\vee \cF^{\cX^*}_s$, $s\in [t, T]$. 
\end{corollary}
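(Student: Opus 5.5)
The plan is to repeat the verification argument of Step 2 in the proof of Theorem \ref{thm-verification}, but now with the feedback filtration $\dbG^*$ in place of the piecewise-constant $\dbG^n$. This removes the discretization error entirely. Write $X_s:=X^{t,\xi}_s$, $\mu^*_s:=\cL_{X_s|\cG^*_s}$ and $\bmu^*_s:=\cL_{\mu^*_s}$.

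\textbf{Admissibility of $\dbG^*$.} First I will check that $\dbG^*\in\cA(t,\cG^0_t)$. Since $\cX^*$ is a strong solution, $\cX^*_s$ is measurable with respect to $\cG^0_t\vee\cF^t_s$; the coefficient depends on $\cG^0_t$ only through $\cL_{\xi|\cG^0_t}$ and on the past of $\cX^*$. Hence $\cG^*_s\subset\cF_s$, and for $t\le s_1<s_2$ the path of $\cX^*$ on $[s_1,s_2]$ is a functional of $(\cX^*_{[t,s_1]},B^{s_1}_{[s_1,s_2]})$. This is the mechanism of Example \ref{eg-Hhypothesis}, and it gives $\cG^*_{s_2}\subset\cG^*_{s_1}\vee\cF^{s_1}_{s_2}$. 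So (H*) holds.

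\textbf{Identifying $\si^{\dbG^*}$.} Next I will compute $\si^{\dbG^*}$ from Lemma \ref{lem-BG}. Write $\si^*_s:=\si^*_V(s,\bmu^*_s,\mu^*_s)$, which is $\dbG^*$-adapted; by Lemma \ref{lem-si*} it takes values in $\{0,I_d\}$. I claim $\si^{\dbG^*}_s(\si^{\dbG^*}_s)^\top=\si^*_s(\si^*_s)^\top$.
\begin{itemize}
\item On $\{\si^*=I_d\}$, the increment $d\cX^*_s=dB_s$ is $\dbG^*$-observable. The $\dbG^*$-martingale $B^\dbG$ then has the same increments as $B$, so the inequality $\si^\dbG(\si^\dbG)^\top\le I_d$ is an equality there.
\item On $\{\si^*=0\}$, $\cX^*$ is frozen and no new information is acquired. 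Then $dB^{\dbG}$ should vanish, i.e.\ $\si^\dbG=0$.
\end{itemize}
This identification is the step I expect to be the main obstacle. $\dbG^*$ is generated by a stochastic integral against $B$ with a $\{0,I_d\}$-valued, $\dbG^*$-adapted integrand, so I will argue via the martingale representation in $\dbG^*$: every $\dbG^*$-martingale is $\int \eta\,d\cX^*$. Hence $B^\dbG=\int \eta_s\si^*_s dB_s$, and the projection identity $\dbE[\cX^*_s\cdot\,\cdot\,|\cdots]$ forces $\eta\si^*=\si^*$. Combined with the identity $\langle B^\dbG,\cX^*\rangle=\langle B,\cX^*\rangle$, which holds because $\cX^*$ is $\dbG^*$-adapted, this gives $\si^{\dbG^*}(\si^{\dbG^*})^\top=\si^*(\si^*)^\top$.

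\textbf{Closing the argument.} Consequently, for a.e.\ $s$,
\[
\cH_2V\big(s,\bmu^*_s,\mu^*_s,X_s,\widetilde X_s,\si^{\dbG^*}_s\big)\quad\text{and}\quad \cH_2V\big(s,\bmu^*_s,\mu^*_s,X_s,\widetilde X_s,\si^*_s\big)
\]
have the same expectation, by the conditional independence of $\widetilde X$ given $\cG^*_s$. Lemma \ref{lem-si*} then yields $\dbL^{\dbG^*}V(s,\bmu^*_s)=\dbL V(s,\bmu^*_s)$. Applying the It\^o formula \eqref{Ito} on $[t,T]$ together with the HJB equation \eqref{HJB} gives
\[
J(t,\ul\xi,\dbG^*)-V(t,\bmu)=\int_t^T\big[\dbL^{\dbG^*}V-\dbL V\big](s,\bmu^*_s)\,ds=0 .
\]
Therefore $J(t,\ul\xi,\dbG^*)=V(t,\bmu)$, and $\dbG^*$ is optimal.
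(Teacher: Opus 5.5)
Your skeleton is exactly what the paper has in mind when it calls the proof straightforward: It\^o's formula along $\dbG^*$, the HJB equation, and $\dbL^{\dbG^*}V=\dbL V$ via Lemma \ref{lem-si*}. The closing step is fine once $\si^{\dbG^*}(\si^{\dbG^*})^\top=\si^*(\si^*)^\top$ is known. Your identity $\langle B^{\dbG^*},\cX^*\rangle=\langle B,\cX^*\rangle$, which holds under (H), does give $\si^{\dbG^*}=I_d$ on $\{\si^*=I_d\}$.

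\textbf{The gap is the other half of the identification.} The claim ``every $\dbG^*$-martingale is $\int\eta\,d\cX^*$'' is asserted, not proved. It is at least as strong as what you need, namely that the continuous $\dbG^*$-martingale $\dbE[\int_t^{\cdot}(I_d-\si^*_r)dB_r\,|\,\cG^*_{\cdot}]$ vanishes. It also does not follow from strong existence. Take $d=1$ and $d\cX_s=1_{\{\cX_s\neq 0\}}dB_s$, $\cX_0=0$. The process $\cX_s=(B_s-B_\tau)1_{\{s\ge\tau\}}$, with $\tau$ the hitting time of $1$ by $B$, is a strong solution with a $\{0,1\}$-valued feedback coefficient. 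Yet $\dbP(\tau\le u\,|\,\cF^{\cX}_s)$ jumps at $\tau$, so no such representation exists. Moreover $\dbE[B_s|\cF^{\cX}_s]=-\dbP(\tau\le s)/\dbP(\tau>s)\neq 0$ on $\{\tau>s\}$, so the filtration learns about $B$ while $\cX$ is switched off.

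\textbf{The same example defeats your admissibility step.} Strong existence does not yield the flow property that $\cX^*|_{[s_1,s_2]}$ is a functional of $(\cX^*_{[t,s_1]},B^{s_1})$; in the example it depends on $B_{s_1}$. There $\dbF^{\cX}$ violates (H), hence (H*). Example \ref{eg-Hhypothesis} needs Lipschitz or piecewise-constant coefficients, and the discontinuous $\si^*_V$ is neither.

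\textbf{What is needed.} Read $\dbG^*\in\cA(t,\cG^0_t)$ as part of the hypothesis, then derive the off-set identification from (H)/(H*); this is where the real work lies. One way to start:
\begin{itemize}
\item Take a constancy interval $[a,b)$ of $\cX^*$ beginning at a $\dbG^*$-stopping time $a$. On $\{a\le s<b\}$ the trace of $\cG^*_s$ coincides with that of $\cG^*_a$.
\item So there $B^{\dbG^*}$ equals a $\cG^*_a$-measurable function of $s$ of finite variation, by optional sampling for $B$. Hence $\si^{\dbG^*}=0$ a.e.\ on such intervals.
\item You must still handle the part of $\{\si^*=0\}$ that is not interior to such intervals. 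When $d=1$, the $\cH_2$-term equals $|\si'|^2$ times the integral in Lemma \ref{lem-si*}, so only the set where that integral is strictly negative matters.
\end{itemize}
Without an argument of this kind, $\dbL^{\dbG^*}V=\dbL V$ is not established, and neither is the optimality of $\dbG^*$.
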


The proof is straightforward, and thus we omit it. In particular, in the scalar case: $d=1$, $\si^*_V$ takes the following form
\bea
\label{si*V2}
\left.\ba{c}
\displaystyle \si^*_V(t, \bmu, \mu) = I^*\Big(\int_\dbR\int_\dbR \partial_{x\tilde x}\frac{\delta^2}{\delta\mu^2}\frac{\delta}{\delta\bmu} V\big(t, \bmu, \mu, x, \tilde x) \si(t,x) \si(t, \tilde x)\mu(dx)\mu(d\tilde x)\Big),\medskip\\
\displaystyle \mbox{where}\q I^*(x) = 1, ~x>0;\q I^*(x)=0,~x<0;\q 0\le I^*(0) \le 1.
\ea\right.
\eea
We remark that \eqref{SDE*} involves conditional distribution, conditional on the filtration generated by the solution process, so it is highly nontrivial. Moreover, $I^*$ is discontinuous, so even in the scalar case, it is very challenging to obtain a strong solution.

\subsection{ A dynamic insider trading problem} 
\label{sect-insider2}
As an application of our general theory, we conclude this section by extending the insider trading problem in Section \ref{sect-insider1} to the dynamic setting. Inspired by \eqref{tildeu}, we consider 
\bea
\label{insider2-v}
V(t, \xi, \cG^0_t) := -\inf_{\dbG\in \cA(t, \cG^0_t)} \dbE\Big[\int_t^T\big(|\mathbb E[X^{t,\xi}_s|\mathcal G_s]|-1\big)^2ds\Big],\q \mbox{where}\q X^{t,\xi}_s := \xi + B^t_s.
\eea
 That is, we consider \eqref{X}-\eqref{VtG} with, 
\bea
\label{insider2}
b\equiv 0,\q \si\equiv 1,\q G\equiv 0,\q F(\bmu) =- \int_{\cP_2(\dbR)}  \big(|m_\mu|-1\big)^2~  \bmu(d\mu),~ \bmu\in \cP_2(\cP_2(\dbR)),
\eea
where $m_\mu := \int_\dbR x\mu(dx)$. Then the HJB equation \eqref{dbLV}-\eqref{HJB} becomes: 
\bea
\label{insiderHJB}
&&\partial_t V(t, \bmu)  + \int_{\mathcal P_2(\mathbb R)}\Big[\int_{\mathbb R}\frac{1}{2}\partial_{xx}\frac{\delta}{\delta\mu}\frac{\delta}{\delta\bmu}V\big(t,\bmu, \mu,x\big)\mu(dx)\\
&&+\frac{1}{2}\Big(\int_{\mathbb R\times \dbR}\partial_{x\tilde x}\frac{\delta^2}{\delta\mu^2}\frac{\delta}{\delta\bmu}V\big(t,\bmu, \mu,  x,\widetilde x\big) \mu(dx)\mu(d\widetilde x)\Big)^+ - \big(|m_\mu|-1\big)^2\Big]  \bmu(d\mu)=0.\nonumber
\eea
We remark that here we consider only a toy model so that we can apply the results in the previous section. The insider trading problem with dynamic information has independent interest, and we shall leave general models for future research. 

Note that, for $\dbG\in \cA(t, \cG^0_t)$ and for $s\ge t$,
\beaa
\dbE[X^{t,\xi}_s|\mathcal G_s] = \dbE[\xi|\cG^0_t] + B^{\dbG}_s - B^{\dbG}_t =  \dbE[\xi|\cG^0_t] + \int_t^s \si^{\dbG}_r dB_r.
\eeaa
Introduce the following standard control problem:
\bea
\label{insiderv}
v(t,x) := - \inf_{0\le \si'\le 1} \dbE\Big[\int_t^T\big(\big| x+\int_t^s \si'_r dB_r\big|-1\big)^2ds\Big],
\eea
which satisfies the standard HJB equation on $[0, T]\times \dbR$:
\bea
\label{HJBv}
\partial_t v + {1\over 2} \big(\partial_{xx} v\big)^+ - \big(|x|-1\big)^2 =0,\q v(T,x)=0.
\eea
Then one can easily see that
\bea
\label{insiderVv}
V(t, \cL_{\cL_{\xi|\cG^0_t}}) =\dbE\Big[ v\big(t, \dbE[\xi|\cG^0_t]\big)\Big],\q\mbox{or say}\q V(t, \bmu) = \int_{\cP_2(\dbR)} v(t, m_\mu) \bmu(d\mu),
\eea
 In particular, by using the information of $v$, we can easily construct approximate optimal $\dbG\in \cA(t, \cG^0_t)$ for the optimization problem \eqref{insider2-v}.

We next identify the HJB equations \eqref{insiderHJB} and \eqref{HJBv}.  Given $\bmu_1, \bmu_2$, denoting $\bmu^\th := \th \bmu_2 + (1-\th) \bmu_1$, by \eqref{insiderVv} we have
\beaa
V(t, \bmu_2) - V(t, \bmu_1) &=& \int_0^1 \int_{\cP_2(\dbR)}{\d\over \d \bmu}V(t, \bmu^\th, m_\mu) \big(\bmu_2 - \bmu_1\big)(d\mu)\big) d\th;\\
&=&\int_{\cP_2(\dbR)} v(t, m_\mu) \big(\bmu_2-\bmu_1\big)(d\mu).
\eeaa
Then we can easily see that 
\beaa
{\d\over \d\bmu} V(t, \bmu, \mu) = v(t, m_\mu).
\eeaa
We may continue to obtain the other derivatives:
\bea
\label{insider-pav}
&\partial_t V(t, \bmu) = \int_{\cP_2(\dbR)} \partial_t v(t, m_\mu) \bmu(d\mu),\\
&{\d\over \delta\mu}\frac{\delta}{\delta\bmu}V\left(t,\bmu, \mu,x\right) = \partial_x v(t,m_\mu) x,\q {\d^2\over \delta\mu^2}\frac{\delta}{\delta\bmu}V\left(t,\bmu, \mu,x, \tilde x\right) =  \partial_{xx} v(t,m_\mu) x\tilde x.\nonumber
\eea
Plug these into \eqref{insiderHJB}, we obtain
\beaa
\int_{\mathcal P_2(\mathbb R)}\Big[\partial_t v(t, m_\mu)  +\frac{1}{2}\big(\partial_{xx} v(t,m_\mu) \big)^+- \big(|m_\mu|-1\big)^2\Big]  \bmu(d\mu)=0.
\eeaa
Clearly this is equivalent to 
\beaa
 \partial_t v(t, m_\mu) +\frac{1}{2}\big(\partial_{xx} v(t,m_\mu) \big)^+- \big(|m_\mu|-1\big)^2=0,
\eeaa
which identifies with \eqref{HJBv}.

Finally, assume $V$ or $v$ is sufficiently smooth. In this case \eqref{si*} becomes:
\beaa
\big(\partial_{xx} v(t, x) \big)^+=\partial_{xx}  v(t, x) |\si^*_v(t,x)|^2.
\eeaa
That is, recalling the $I^*$ in \eqref{si*V2},
\beaa
\si^*_v(t,  x) = I^*\big(\partial_{xx}  v(t, x)\big).
\eeaa
 Consequently, \eqref{SDE*} becomes:
\bea
\label{insiderSDE*}
d\cX^*_s = I^*\Big(\partial_{xx}  v\big(s, \dbE[X_s|\cG^0_t \vee \cF^{\cX^*}_s]\big)\Big) dB_s,\q \cX^*_t=0.
\eea
Then we obtain the optimal $\cG^*_s := \cG^0_t \vee \cF^{\cX^*}_s$, provided that the above SDE has a strong solution. 

Note that \eqref{insiderSDE*} still involves the filtration generated by the solution and thus is non-standard. Due to the special structure here, we may simplify it further. Given $X_t = \xi$, consider the following standard SDE (but with discontinuous coefficients):
\bea
\label{insiderSDE*2}
\hat\cX^*_s =  \dbE[\xi |\cG^0_t] + \int_t^s I^*\big(\partial_{xx}  v\big(r, \hat\cX^*_r\big)\big) dB_r.
\eea
Then $\si'_s =I^*\big(\partial_{xx}  v\big(s, \hat\cX^*_s\big)\big)$ is an optimal control for the optimization problem $v(t, \dbE[\xi |\cG^0_t])$ in \eqref{insiderv}, in a pointwise sense for $x= \dbE[\xi |\cG^0_t]$. Thus, by \eqref{insiderVv} we may identify the optimal control (assuming it is unique):
 \beaa
 I^*\big(\partial_{xx}  v\big(r, \hat\cX^*_r\big)\big) =  I^*\Big(\partial_{xx}  v\big(s, \dbE[X_s|\cG^0_t \vee \cF^{\cX^*}_s]\big)\Big).
 \eeaa
 Consequently, \eqref{insiderSDE*} becomes:
 \bea
\label{insiderSDE*3}
\cX^*_s = \int_t^s I^*\big(\partial_{xx}  v\big(r, \hat\cX^*_r\big)\big) dB_r = \hat \cX^*_s - \dbE[\xi |\cG^0_t].
\eea
That is, we may first solve (standard) SDE \eqref{insiderSDE*2}, next define $\cX^*_s$ by \eqref{insiderSDE*3}, then $\cG^*_s:= \cG^0_t \vee \cF^{\cX^*}_s$ is the optimal control of $V(t, \cL_{\cL_{\xi|\cG^0_t}})$. We shall note though, the well-posedness of  SDE \eqref{insiderSDE*2} is still not easy, in particular, it is not clear to us if $v$ is smooth.

\begin{remark}
For $|x|\ge 1$, one can easily see that, for any $0\le \si'\le 1$ and $t\le s$,
\beaa
\dbE\big[\big| x+\int_t^s \si'_r dB_r\big|\big]\ge \big|\dbE\big[ x+\int_t^s \si'_r dB_r\big]\big| = |x|\ge 1.
\eeaa
Then
\beaa
 \dbE\Big[\int_t^T\big(\big| x+\int_t^s \si'_r dB_r\big|-1\big)^2ds\Big] &\ge&   \int_t^T\Big(\dbE\big[\big| x+\int_t^s \si'_r dB_r\big|\big]-1\Big)^2ds\\
 &\ge& \int_t^T(| x|]-1)^2ds =(T-t)(|x|-1)^2,
\eeaa
with equalities holding if and only if $\si'\equiv 0$. This implies that
\beaa
v(t,x) = -(T-t)(|x|-1)^2 =: \tilde v(t,x),\q\mbox{with optimal $\si^{'*}\equiv 0$},\q |x|\ge 1.
\eeaa

One can verify that the above function $\tilde v$ satisfies \eqref{HJBv} for all $x\neq 0$. 
However, $\tilde v$ is not a viscosity solution of \eqref{HJBv}, since the viscosity property fails at $x=0$. In particular, the control $\sigma' \equiv 0$, which induces $\tilde v$,  is not optimal for the problem \eqref{insiderv} at $x=0$. So $v(t,x) \ge \tilde v(t,x)$ for $|x|<1$ and in particular $v(t,0) > \tilde v(t, 0)$ when $t<T$.
\end{remark}

\section{A generalized It\^o formula}
\label{sect-general}
We first recall an It\^{o} formula on the space $[0, T]\times \dbR^d\times \cP_2(\dbR^d)$ which is widely used for mean field games with common noise, see e.g. \cite{CD2}. Let $(\O, \dbF, \dbP)$ be a filtered probability space, $B^0, B'$ be independent Brownian motions, and, for $i=1,2$,
\bea
\label{cXi}
d \cX^i_t= b^i_t dt + \si^i_t dB'_t + \si^{i,0}_t d B^0_t,
\eea
where $b^i, \si^i, \si^{i,0}$ are appropriate $\dbF$-progressively measurable processes. Let $v: [0, T]\times \dbR^d\times \cP_2(\dbR^d) \to \dbR$ be a smooth function. Then,  denoting $\dbF^0:= \dbF^{B^0}$ and $\mu_t:=\mathcal{L}_{\cX^2_t|\cF^{0}_t}$,
\bea
\label{MFIto}
&&\!\!\!\!\!\!\!\!\!\! d v(t, \cX^1_t, \mu_t) = \Big\{ \Big[ \partial_t v +  \partial_x v\cdot b^1_t + \frac{1}{2} \partial_{xx} v: [\si_t^1 (\si_t^1)^\top + \si_t^{1,0}(\si_t^{1,0})^\top]\Big](t, \cX^1_t, \mu_t)  \nonumber\\
&&\!\!\!\!\!\!\!\!\!\! + \dbE_{\cF^{0}_t}\Big[ \Big(\partial_{\tilde x}{\d \over \d\mu} v \cdot \tilde b^{2}_t +  \partial_{x\tilde x}{\d \over \d\mu} v : \si^{1,0}_t (\tilde \si_t^{2,0})^\top + \frac{1}{2}  \partial_{\tilde x\tilde x} {\d \over \d\mu} v : [\tilde \si_t^2 (\tilde \si_t^2)^\top + \tilde \si_t^{2,0}(\tilde \si_t^{2,0})^\top]\Big)(t,\cX^1_t,\mu_t,\tilde \cX^2_t)\nonumber\\
&&\!\!\!\!\!\!\!\!\!\! +\frac{1}{2} \partial_{\tilde x\bar x} {\d^2 \over \d\mu^2} v (t,\cX^1_t,\mu_t,\tilde \cX^2_t,\bar \cX^2_t) : \tilde\si_t^{2,0}(\bar \si_t^{2,0})^\top\Big]\Big\}dt+ \partial_x v(t,\cX^1_t,\mu_t)\cdot\si_t^1dB'_t \\
&&\!\!\!\!\!\!\!\!\!\! +\Big[(\si^{1,0}_t)^\top \partial_xv(t,\cX^1_t,\mu_t)+\mathbb E_{\cF^0_t}\big[(\tilde \si^{2,0}_t)^\top \partial_{\tilde x} {\d \over \d\mu} v (t,\cX^1_t,\mu_t,\tilde \cX^2_t) \big]\Big]\cdot dB_t^0.\nonumber
\eea
Introduce
\bea
\label{Vv}
V(t, \cL_{(\cX^1_t, \mu_t)}) := \dbE\big[v(t, \cX^1_t, \mu_t)\big].
\eea
Then $V$ is a function on $[0, T]\times \cP_2(\dbR^d\times \cP_2(\dbR^d))$. In this subsection we extend the It\^{o} formula \eqref{Ito} to smooth functions on this space and verify its consistency with \eqref{MFIto}. We remark that we can easily extend further our formula to cover more general It\^{o} formulae on measure valued processes in the spirit of \eqref{Vv}, and we leave that for interested readers. 

For this purpose, we first extend Definition \ref{defn-Uderivative} in an obvious manner. For $U: \mathcal P_2(\dbR^d\times \cP_2(\mathbb R^d))\rightarrow \mathbb{R}$ and abusing the notation $\bmu$ to denote the elements of $\cP(\dbR^d\times \cP_2(\mathbb R^d))$, we call a continuous function ${\d\over \d\bmu} U: \cP_2(\dbR^d\times \cP_2(\dbR^d))\times \dbR^d\times \cP_2(\dbR^d) \to \dbR$  the linear functional derivative of $U$ if $\big|\frac{\delta}{\delta \bmu}U(\bmu, x, \mu)\big| \leq C\big(1+|x|^2+\|\mu\|_2^2\big)$ for some constant $C$, which may depend on $U$ but is uniform on $\bmu$, and for any $ \bmu_1, \bmu_2\in \cP_2(\dbR^d\times \cP_2(\dbR^d))$,
\bea
\label{linearU2}
 U(\bmu_2)-U(\bmu_1)=\int_0^1 \int_{\dbR^d\times \cP_2(\mathbb{R}^d)} \frac{\delta}{\delta \bmu}U( \theta\bmu_2+(1-\theta) \bmu_1,x, \mu)(\bmu_2-\bmu_1)(dx, \mathrm{d}\mu) \mathrm{d}\theta.
 \eea
Define the higher order derivatives in an obvious manner, we then have the following generalized It\^{o} formula whose proof is similar to Theorem \ref{thm-Ito} and is postponed to Appendix \ref{appendixB}.

\begin{thm}
\label{thm-Ito-general}
Let $\dbG\in \cA_0$, and for $i=1,2$, $\cX^i_t = \cX^i_0 + \int_0^t \a^i_s ds + \int_0^t \b^i_s dB_s$, where $\cX^i_0\in \dbL^2(\cF_0)$ and $\a^i,  \b^i\in \dbL^2(\dbF)$. Let $V: [0, T]\times \mathcal P_2(\dbR^d\times \cP_2(\mathbb R^d))\rightarrow \mathbb{R}$ be such that all the following derivatives, and its lower order derivatives, exist and are locally uniformly continuous in the sense of Assumption \ref{assum-standing} (ii) with $\cK_R:=\{(x, \bmu): |x|\le R, \|\bmu\|_2\le R\}$: 
\bea
\label{derivatives-common}
\left.\ba{c}
\displaystyle\partial_t V(t, \bmu),\q \partial_{xx}{\d\over \d\bmu} V(t, \bmu, x,\mu),\\
\displaystyle  \partial^{(2)}_{(x,\tilde x)} {\d\over \d\mu}{\d\over \d\bmu} V(t, \bmu, x, \mu, \tilde x), \q \partial^{(2)}_{(x, \tilde x, \bar x)}  {\d^2\over \d\mu^2}{\d\over \d\bmu} V(t, \bmu, x,\mu, \tilde x, \bar x).
\ea\right.
\eea
Here $ \partial^{(2)}_{(x,\tilde x)}$ denotes all the second order derivatives with respect to $x$ and $\tilde x$, and similarly for  $\partial^{(2)}_{(x,\tilde x, \bar x)}$. 
Then the following It\^{o} formula holds true:
\bea
\label{Ito-general}
\displaystyle \frac{d}{dt} V(t,\bmu_t)
&=&\partial_t V(t,\bmu_t)+\dbE\Big[\partial_x{\d\over \d \bmu}V\big(t,\bmu_t, \cX^1_t, \mu_t \big) \cdot \a^1_t + {1\over 2}\partial_{xx}{\d\over \d \bmu}V\big(t,\bmu_t,  \cX^1_t, \mu_{t}\big) : \b^1_t(\b^1_t)^\top \nonumber\\
\displaystyle && + \partial_{\tilde x}{\d\over \d\mu}{\d\over \d \bmu}V\big(t,\bmu_t,  \cX^1_t, \mu_t, \tilde\cX^2_t\big)\cdot \tilde\a^2_t + {1\over 2}\partial_{\tilde x\tilde x}{\d\over \d\mu}{\d\over \d \bmu}V\big(t,\bmu_t,\cX^1_t,  \mu_{t}, \tilde\cX^2_t\big) : \tilde\b^2_t(\tilde\b^2_t)^\top\nonumber\\
 &&+\partial_{x{\tilde x}}  {\d\over \d\mu}{\d\over \d \bmu}V\big(t,\bmu_{t}, \cX_t^1, \mu_{t}, \tilde\cX^2_{t}\big) : \b^1_t \si^\dbG_t (\si^\dbG_t)^\top ({\tilde\b}_t^2)^\top \\
 &&+{1\over 2} \partial_{\tilde x\bar x}  {\d^2\over \d\mu^2}{\d\over \d \bmu}V\big(t,\bmu_{t}, \cX_t^1, \mu_{t}, \tilde\cX^2_{t}, \bar\cX^2_{t}\big) : \tilde\b^2_t \si^\dbG_t (\si^\dbG_t)^\top (\bar\b^2_t)^\top\Big],\nonumber
\eea
where $\bmu_t=\mathcal L_{(\cX^1_t, \mathcal L_{\cX^2_{t}|\mathcal G_{t}})}$, $\mu_t=\cL_{\cX^2_t|\cG_t}$, $(\tilde\a^2, \tilde \b^2,\bar \b^2, \tilde \cX^2,\bar \cX^2)$ is a conditionally independent copy of $(\alpha^2, \b^2, \b^2, \cX^2,\cX^2)$, conditional on $\mathcal G_t$.
\end{thm}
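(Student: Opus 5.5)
We follow the proof of Theorem \ref{thm-Ito}: time discretization, Taylor expansion, and the (H*)-hypothesis used to compute the cross-covariance between the increment of the conditional law and the increment of the "private" component. Fix a uniform partition $t_i=i\Delta t$, set $\mu_t=\cL_{\cX^2_t|\cG_t}$, $\bmu_t=\cL_{(\cX^1_t,\mu_t)}$, and split
\[
V(T,\bmu_T)-V(0,\bmu_0)=I^n_1+I^n_2 .
\]
Here $I^n_1$ collects the time increments, and converges to $\int_0^T\partial_tV(t,\bmu_t)\,dt$ exactly as before. By \eqref{linearU2},
\[
I^n_2=\sum_i\int_0^1\dbE\Big[\tfrac{\d}{\d\bmu}V(t_i,\bmu^\th_i,\cX^1_{t_{i+1}},\mu_{t_{i+1}})-\tfrac{\d}{\d\bmu}V(t_i,\bmu^\th_i,\cX^1_{t_i},\mu_{t_i})\Big]\,d\th .
\]
We write $w_i(x,\mu):=\frac{\d}{\d\bmu}V(t_i,\bmu^\th_i,x,\mu)$. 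The continuity estimate $\cW_2^2(\bmu_{t+\d},\bmu_t)\le \dbE|\cX^1_{t+\d}-\cX^1_t|^2+\dbE|\cX^2_{t+\d}-\cX^2_t|^2=o(1)$ follows as before from the (H)-hypothesis ($\cL_{\cX^2_t|\cG_t}=\cL_{\cX^2_t|\cG_{t+\d}}$). It lets us freeze $\bmu^\th_i$ at $\bmu_{t_i}$ at the end, using local uniform continuity on $\cK_{R_0}$.

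\textbf{Second-order Taylor expansion of the increment of $w_i$.} We expand $w_i$ jointly in $(x,\mu)$. The pure $x$ part, $w_i(\cX^1_{t_{i+1}},\mu_{t_i})-w_i(\cX^1_{t_i},\mu_{t_i})$, is handled by the classical It\^o formula and yields the $\a^1$ and $\b^1\b^{1\top}$ terms. The pure $\mu$ part at fixed $x=\cX^1_{t_i}$ is treated exactly like $I^n_{2,1}$ and $I^n_{2,2}$ in Theorem \ref{thm-Ito}. Writing $\int\frac{\d}{\d\mu}w_i(\cdot,\mu_{t_i},\tilde x)\D\mu_{t_{i+1}}(d\tilde x)$ as a difference of conditional expectations of a conditionally independent copy $\tilde\cX^2$, its first-order part produces the $\tilde\a^2$ and $\tilde\b^2\tilde\b^{2\top}$ terms. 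Its second-order part $\frac12\iint\frac{\d^2}{\d\mu^2}w_i\,\D\mu\,\D\mu$ produces the $\tilde\b^2\si^\dbG\si^{\dbG\top}\bar\b^{2\top}$ term via claim \eqref{Ito-claim} and Lemma \ref{lem-BG}, verbatim. The genuinely new piece is the mixed term
\[
\dbE\Big[\int\Big(\tfrac{\d}{\d\mu}w_i(\cX^1_{t_{i+1}},\mu_{t_i},\tilde x)-\tfrac{\d}{\d\mu}w_i(\cX^1_{t_i},\mu_{t_i},\tilde x)\Big)\D\mu_{t_{i+1}}(d\tilde x)\Big],
\]
together with a remainder. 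The remainder is $o(\Delta t)$ by continuity of the second derivatives and $\dbE|\D\cX|^2\le C\Delta t$, after first reducing to bounded, $t$-continuous $\a^i,\b^i$ by approximation.

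\textbf{The mixed term (main obstacle).} Rewrite it as
\[
\dbE\Big[g_i(\cX^1_{t_{i+1}},\tilde\cX^2_{t_{i+1}})-g_i(\cX^1_{t_{i+1}},\tilde\cX^2_{t_i})-g_i(\cX^1_{t_i},\tilde\cX^2_{t_{i+1}})+g_i(\cX^1_{t_i},\tilde\cX^2_{t_i})\Big],
\qquad g_i:=\tfrac{\d}{\d\mu}w_i(\cdot,\mu_{t_i},\cdot),
\]
where $\tilde\cX^2$ is a conditionally independent copy given $\dbG$ on the enlarged space, and $g_i$ is $\cG_{t_i}$-measurable. This uses $\cL_{\cX^2_{t_i}|\cG_{t_i}}=\cL_{\cX^2_{t_i}|\cG_{t_{i+1}}}$ and the fact that $\cX^1_{t_{i+1}}$ is $\cF$-measurable while $\tilde\cX^2$ lives on the copy. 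Taylor expansion reduces it to
\[
\dbE\Big[\partial_{x\tilde x}g_i(\cX^1_{t_i},\tilde\cX^2_{t_i}):\b^1_{t_i}\,B^{t_i}_{t_{i+1}}(\tilde B^{t_i}_{t_{i+1}})^\top(\tilde\b^2_{t_i})^\top\Big]+o(\Delta t).
\]
Conditioning on $\cF_{t_i}\vee\tilde\cF_{t_i}\vee\cG_{t_{i+1}}$, claim \eqref{Ito-claim} applies to both $B$ and $\tilde B$. For $\tilde B$ it applies by symmetry of the construction, since $\tilde B$ shares the same $\cG$-projection; this symmetry is the point to verify carefully. Under the regular conditional probability given $\cG_{t_{i+1}}$, the original and the copy are independent, so the conditional expectation of the product factorizes into $(B^\dbG_{t_{i+1}}-B^\dbG_{t_i})(B^\dbG_{t_{i+1}}-B^\dbG_{t_i})^\top$. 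By Lemma \ref{lem-BG} this has expectation $\int\si^\dbG\si^{\dbG\top}dt$ against $\cG_{t_i}$-measurable weights, which produces the $\b^1\si^\dbG\si^{\dbG\top}\tilde\b^{2\top}$ term.

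Summing over $i$, freezing arguments at $(t,\bmu_t,\mu_t)$ using the $o(1)$ continuity estimates, and letting $n\to\infty$ gives \eqref{Ito-general}. The only delicate justification is the factorization step in the mixed term, which relies on the (H*)-hypothesis exactly as in \eqref{Ito-claim}.
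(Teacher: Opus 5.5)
Your proposal is correct and follows essentially the same route as the paper's proof. It uses the same split into the time increment, the pure-$x$ increment (classical It\^o), the pure-$\mu$ increment (as in Theorem \ref{thm-Ito}), and the mixed double difference (the paper's $I^n_{2,3,1}$, handled by Taylor expansion, \eqref{Ito-claim}--\eqref{Ito-claim2} and Lemma \ref{lem-BG}), plus a third-order remainder ($I^n_{2,3,2}=o(1)$).

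One small correction: the weights $\partial_{x\tilde x}g_i(\cX^1_{t_i},\tilde\cX^2_{t_i})$, $\b^1_{t_i}$, $\tilde\b^2_{t_i}$ are $\cF_{t_i}\vee\tilde\cF_{t_i}$-measurable, not $\cG_{t_i}$-measurable. So the last step should use that $B$, hence $B^\dbG=\int\si^\dbG dB$, remains a martingale in the joint filtration $\{\cF_t\vee\tilde\cF_t\}$, which does hold under the (H)-hypothesis.
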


\smallskip
\begin{remark}
\label{rem-consistency}
We now show that \eqref{MFIto} and \eqref{Ito-general} are consistent, in the sense of \eqref{Vv}. For notational simplicity, assume $d=2$, $B= (B', B^0)^\top$ with independent $1$-dimensional Brownian motions $B'$ and $B^0$, and $\cX^1, \cX^2$ are both $1$-dimensional.  Then $\a^i=b^i$ and $\b^i = [\si^i, \si^{i,0}]$.  Set $\dbG := \dbF^{B^0}$, then clearly $\dbG$ satisfies the (H*)-hypothesis and $\si^\dbG = \left[\begin{matrix} 0 & 0\\ 0 & 1\end{matrix}\right]$. Take expectation on both sides of \eqref{MFIto}, we have
\bea
\label{MFIto2}
&&{d\over dt} \dbE\big[v(t, \cX^1_t, \mu_t)\big] = \dbE\Big\{ \Big[ \partial_t v +  \partial_x v b^1_t + \frac{1}{2} \partial_{xx} v[|\si_t^1|^2 + |\si_t^{1,0}|^2]\Big](t, \cX^1_t, \mu_t)  \nonumber\\
&&+\Big(\partial_{\tilde x}{\d \over \d\mu} v  \tilde b^{2}_t +  \partial_{x\tilde x}{\d \over \d\mu} v  \si^{1,0}_t \tilde \si_t^{2,0} + \frac{1}{2}  \partial_{\tilde x\tilde x} {\d \over \d\mu} v [|\tilde \si_t^2 \tilde \si_t^2|^2 + |\tilde \si_t^{2,0}|^2]\Big)(t,\cX^1_t,\mu_t,\tilde \cX^2_t)\nonumber\\
&&+\frac{1}{2} \partial_{\tilde x\bar x} {\d^2 \over \d\mu^2} v (t,\cX^1_t,\mu_t,\tilde \cX^2_t,\bar \cX^2_t) \tilde\si_t^{2,0}\bar \si_t^{2,0}\Big\}.
\eea
On the other hand, by \eqref{Vv} we have ${\d\over \d\bmu} V(t, \bmu, x, \mu) = v(t, x, \mu)$. Plug this into \eqref{Ito-general}, we have
\bea
\label{Ito-general2}
\displaystyle \frac{d}{dt} V(t,\bmu_t)
&=&\partial_t V(t,\bmu_t)+\dbE\Big[\partial_xv\big(t, \cX^1_t, \mu_t \big)  \a^1_t + {1\over 2}\partial_{xx}v\big(t,\cX^1_t, \mu_{t}\big)  |\b^1_t|^2 \nonumber\\
\displaystyle && + \partial_{\tilde x}{\d\over \d\mu}v\big(t,\cX^1_t, \mu_t, \tilde\cX^2_t\big) \tilde\a^2_t + {1\over 2}\partial_{\tilde x\tilde x}{\d\over \d\mu}v\big(t,\cX^1_t,  \mu_{t}, \tilde\cX^2_t\big) |\tilde\b^2_t|^2\\
 &&+\partial_{x{\tilde x}}  {\d\over \d\mu}v\big(t,\cX_t^1, \mu_{t}, \tilde\cX^2_{t}\big) \si^{1,0}_t \tilde\si^{2,0}_t +{1\over 2} \partial_{\tilde x\bar x}  {\d^2\over \d\mu^2}v\big(t,\cX_t^1, \mu_{t}, \tilde\cX^2_{t}, \bar\cX^2_{t}\big) \tilde\si^{2,0}_t \bar\si^{2,0}_t\Big].\nonumber
\eea
Since $\partial_t V(t,\bmu_t) = \dbE\big[\partial_t v (t, \cX^1_t, \mu_t) \big]$, and $\a^i=b^i$, $|\b^i|^2 = |\si^i|^2 + |\si^{i,0}|^2$, one can easily verify that \eqref{MFIto2} and \eqref{Ito-general2} are the same.
\end{remark}

\begin{appendices}
\section{Some examples}
\label{appendixA}
In this Appendix we present a few (static) games where control of the accessible information is highly relevant for optimizing the game value.

\subsection{Braess's Paradox}
This is a classical example in algorithmic game theory, see e.g. \cite{Braess_2005,Roughgarden_2016}.
Consider a network consisting of four nodes, as shown in Fig.\ref{fig:a}. There are two independent routes, $A\to B\to D$ and $A\to C\to D$, connecting Node $A$ to Node $D$. Each edge bears a cost $c(x)$, where $x$ is the traffic load on that edge. Assume the total traffic is $1$ unit, and each driver wants to choose a route to minimize the total cost. By a simple analysis, there is a unique equilibrium: $x=0.5$ on each route, and each driver ends up incurring a cost of $1.5$ units .

Now, suppose someone tries to reduce traffic costs by adding a new, fast, zero-cost link between the midpoints $B$ and $C$ of the existing two routes, as shown in Fig.\ref{fig:b}, where the new edge $BC$ has no cost: $c(x)=0$. Alternatively, in our context, we may reinterpret this as that the edge BC was already there, but the information of its existence is newly disclosed.
By direct analysis, we see that  the unique equilibrium in this new setting is that all drivers take the route $A\to B\to C\to D$, which leads to a total cost of $2$ units for each driver. That is, by adding a new edge (or by disclosing a new information), the cost at equilibrium increases for all drivers.

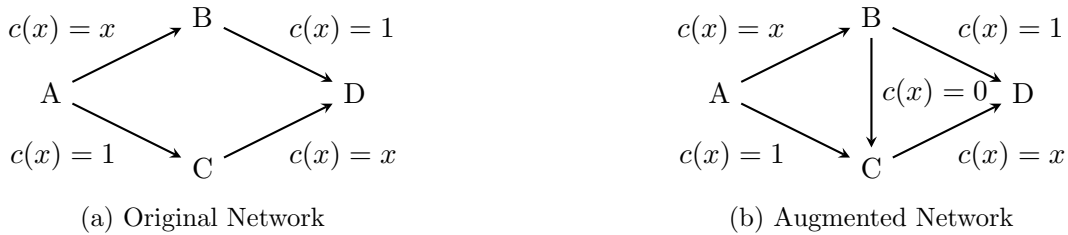
\begin{figure}[H]
\centering
\begin{subfigure}[t]{0.45\textwidth}
\centering
\begin{tikzpicture}[->, >=stealth, node distance=1cm, thick]
    % Nodes
    \node (A) at (0,0) {A};
    \node (B) at (2,1) {B};
    \node (C) at (2,-1) {C};
    \node (D) at (4,0) {D};
    
    % Edges
    \draw (A) -- (B) node[midway, above left] {$c(x)=x$};
    \draw (A) -- (C) node[midway, below left] {$c(x)=1$};
    \draw (B) -- (D) node[midway, above right] {$c(x)=1$};
    \draw (C) -- (D) node[midway, below right] {$c(x)=x$};
\end{tikzpicture}
\caption {Original Network}
\label{fig:a}
\end{subfigure}
\hfill
\begin{subfigure}[t]{0.45\textwidth}
\centering
\begin{tikzpicture}[->, >=stealth, node distance=1cm, thick]
    % Nodes
    \node (A) at (0,0) {A};
    \node (B) at (2,1) {B};
    \node (C) at (2,-1) {C};
    \node (D) at (4,0) {D};
    
    % Edges
    \draw (A) -- (B) node[midway, above left] {$c(x)=x$};
    \draw (A) -- (C) node[midway, below left] {$c(x)=1$};
    \draw (B) -- (D) node[midway, above right] {$c(x)=1$};
    \draw (C) -- (D) node[midway, below right] {$c(x)=x$};
    \draw (B) -- (C) node[midway, right] {$c(x)=0$};
\end{tikzpicture}
\caption{Augmented Network}
\label{fig:b}
\end{subfigure}
\caption{Braess’s Paradox: adding a link to the network leads to worse outcomes for all drivers.}
    \label{fig:network-comparison}

\end{figure}

\subsection{An example of nonzero-sum Nash game}
The Braess’s Paradox is in the spirit of mean field games with infinitely many players. We now consider a two person nonzero-sum Nash game with asymmetric information. 

Let $\xi$ be a standard normal, and, for $\a = (\a_1, \a_2)$
\beaa
X := \xi + \a_1 + \a_2,\q J_i(\a) := \dbE\big[\a_i X - 2 \a_i^2 - \l \a_{-i}^2\big], ~ i=1,2.
\eeaa
Here $\l>0$, $\a_{-i}$ refers to the $\a_j$ for $j\neq i$ in the expression of $J_i(\a)$, and Player $i$ aims to maximize $J_i$ by choosing $\a_i$. Moreover, we assume Player $1$ observes $\xi$ and thus $\a_1$ is $\si(\xi)$-measurable; while Player $2$ has no information and thus $\a_2$ has to be deterministic. It is clear that the equilibrium $\a^*=(\a_1^*, \a_2^*)$ can be characterized by the following system of linear equations: 
\beaa
\xi +\a_2^*  = 2 \a_1^*,\q \dbE\big[\xi + \a^*_1\big]  = 2\a_2^*.
\eeaa
Note that the first equation above implies $\dbE[\xi] +\a_2^* = 2 \dbE[\a_1^*]$, then one can easily obtain:
\bea
\label{Nash1}
\left.\ba{c}
\displaystyle \dbE[\a_1^*] = \a_2^* = \dbE[\xi],\q  \a_1^* = {1\over 2}\big( \xi + \dbE[\xi]\big);\smallskip\\
\displaystyle J_1(\a^*) = {1\over 4} \dbE\big[ \big(\xi + \dbE[\xi]\big)^2\big] - \l \big(\dbE[\xi]\big)^2 = {1\over 4} \dbE[\xi^2] + \big({3\over 4}-\l\big)\big(\dbE[\xi]\big)^2.
\ea\right.
\eea
In particular, since $\xi$ is a standard normal, we see that  $J_1(\a^*)= {1\over 4}$.\footnote{Since $\dbE[\xi]=0$, then $\a_2^*=0$, $\dbE[X] = 0$, and one can easily see that $J_2(\a^*) = -\l \dbE[|\a_1^*|^2] = -{\l\over 4}$. In particular, $J_1(\a^*) > J_2(\a^*)$, that is,  Player 1 has larger value due to the information advantage. However, we shall emphasize that, our purpose here is not to compare Player 1's value with Player 2's value, but rather to compare Player 1's values under different information setting. In fact, in this example, when $\l<{3\over 4}$, by sharing the full information $\xi$, the values of both players increase to $1-\l$.   }

However, Player 1 may choose to share some information $\cG\subset \si(\xi)$ to Player 2, and we assume Player 2 cannot refuse to receive the information. Then $\a_2$ will be $\si(\cG)$-measurable. Similarly to \eqref{Nash1}, for given $\cG$, we see that the equilibrium and Player 1's value become:
\bea
\label{Nash2}
\left.\ba{c}
\displaystyle \dbE[\a_1^*|\cG] = \a_2^* = \dbE[\xi|\cG],\q  \a_1^* = {1\over 2}\big( \xi + \dbE[\xi|\cG]\big);\smallskip\\
\displaystyle J_1(\cG; \a^*) = {1\over 4} \dbE[\xi^2] + \big({3\over 4}-\l\big)\dbE\big[\big(\dbE[\xi|\cG]\big)^2\big].
\ea\right.
\eea
Player 1 would like to choose $\cG\subset \si(\xi)$ to maximize the above $J(\cG;\a^*)$. It is clear that, when $\l>{3\over 4}$, the optimal $\cG^* = \{\emptyset, \O\}$ and $J_1(\cG^*; \a^*) = {1\over 4}\dbE[\xi^2] = {1\over 4}$. However, when $\l<{3\over 4}$, the optimal $\cG^* =\si(\xi)$ and $J_1(\cG^*; \a^*) =(1-\l) \dbE[\xi^2] = 1-\l>{1\over 4}$.

We note that, due to the simple structure here, in this example the optimal $\cG^*$ is either the smallest or the largest sub-$\si$-field. However, as we saw in Section \ref{sect-insider1}, in general models it is possible that it is optimal for the Player 1 to share the information partially.

\subsection{Persuasion Games}
\label{sec_persuation}
The standard Bayesian persuasion model is formulated as a two-player Stackelberg game, see e.g.  \cite{kamenica2011bayesian}. In this subsection, for illustrative purpose we consider only a very simple model.

The game involves two players, the information Sender and the information Receiver. The state of the world is modeled as an $\dbX$-valued random variable $X$ with prior distribution $\mathcal L_X$, known by both players.  The Sender strategically selects an information structure by committing to a signal $S$, an $\dbS$-valued random variable. This defines a joint distribution, known as the information structure, $\mathcal L_{(X,S)}\in \cP(\dbX\times \dbS)$, chosen from an admissible set $\Pi$ in which the marginal distribution of $X$ is $\mathcal L_X$. Upon receiving a signal $S$, the Receiver will choose an action $a$, taking values in $\dbA$.  The timing of the games contains the following five steps:
\begin{enumerate}
    \item Sender chooses an information structure $\pi=\mathcal L_{(X,S)}\in\Pi$.
    \item Receiver is informed about the information structure $\pi=\mathcal L_{(X,S)}$.
    \item Nature chooses a realization of the state and the signal.
    \item Receiver observes the realization of the signal $S$ (but not $X$). 
    \item Receiver takes the action $a\in\dbA$ according to the realization of $S$.
\end{enumerate}
Moreover, the Sender and the Receiver have utility functions $u(x,a)$ and $v(x,a)$, respectively. We consider the Stackelberg game with the Sender as the leader and the Receiver as  the follower: 
\begin{itemize}
\item The Receiver's Problem: given $\pi = \cL_{(X,S)} \in \Pi$,
\bea
\label{Receiver}
V_R(\pi):= \sup_{a\in\dbA}\mathbb E^\pi\big[v(X,a)|S\big].
\eea

    \item The Sender's Problem: assuming \eqref{Receiver} has a unique optimizer $a^*=a^*(\pi; S)$,
\bea
\label{Sender}
V_S := \sup_{\pi\in\Pi}J_S(\pi),\q J_S(\pi):= \mathbb E^\pi[u(X, a^*(\pi; S))].
\eea
\end{itemize}
We note that the Receiver's Problem is a standard control problem, which in particular involves the conditional law $\cL_{X|S}$. The Sender's problem can be viewed as an information control problem. Since $\cL_X$ is given, the joint law $\pi$ is equivalent to the conditional law $\cL_{S|X}$, so the Sender's control is the conditional law $\cL_{S|X}$. Note that, given $\cL_{S|X}$, the conditional law $\cL_{X|S}$ can be obtained by Bayes' rule, thus the Persuasion game is often called Baysian persuasion game. 
 
We next describe the above persuasion game by using our notations and compare it with our information control problem. Let $\mu_0 = \cL_X$ denote the prior distribution of $X$. The sender's control can be viewed as a measure-valued mapping $\cM: \dbX \to \cP(S)$. Then one can easily see that both $V_R(\pi)$ in \eqref{Receiver} and $J_S(\pi)$ in \eqref{Sender} can be written as nonlinear functions of $\cL_{\cM(X)}\in \cP(\cP(S))$.  More importantly, note that $V_S$ in \eqref{Sender} relies on $\cL_X\in \cP(\dbX)$, which can be viewed as $\d_{\cL_X}\in \cP(\cP(\dbX))$. When we consider the problem dynamically, the Sender's dynamic value function will rely on $\bmu=\cL_{\cL_{X|S}}\in \cP(\cP(\dbX))$. So it will be possible to use the techniques developed in this paper to study dynamic persuasion game problems.

We note that, in the persuasion game problem, we disclose information of $X$ by sending new signals $S$. Given $\pi = \cL_{(X, S)}$, we may view $(\cL_X, \cL_{X|S}, \d_X)$ as a $\cP(\dbX)$-valued martingale: 
\bea
\label{mg}
\dbE[\cL_{X|S}] = \mu_0,\q \dbE[\d_X |\si(S)] = \cL_{X|S}.
\eea
In particular, this means that $\d_{\cL_X} \le \bmu = \cL_{\cL_{X|S}} \le \cL_{\d_X}$ under the convex order, namely for any convex function $\Phi: \cP(\cP(\dbX))\to \dbR$, 
\bea
\label{convex}
\Phi(\d_{\cL_X}) \le \Phi(\bmu) \le \Phi(\cL_{\d_X}).
\eea
For our information control problem, we choose sub-information of $X$ directly. In particular, for given $\cG\subset \si(X)$,  $(\cL_X, \cL_{X|\cG}, \d_X)$ is a also $\cP(\dbX)$-valued martingale and $\bmu = \cL_{\cL_{X|\cG}}$ satisfies \eqref{convex} as well. However, in \eqref{mg} $\si(S) \subset \!\!\!\!\!\backslash ~\si(X)$, and thus $\si(S)$ is not an admissible control $\cG$ in our sense, even though it is possible that $\cL_{\cL_{X|S}}$ may identify with $\cL_{\cL_{X|\cG}}$. 

We remark that, the persuasion games use a particular mechanism for disclosing private information. Such mechanism may not be appropriate for information control in more general setting, for example acquiring information (with cost) or various information constraints.  We shall study the information control and game problems in more general settings, and possibly provide a unified framework to cover the persuasion game approach, in future research. 
\end{appendices}

\begin{appendices}
\section{Some technical proofs}
\label{appendixB}

\noindent{\bf Proof of Theorem \ref{thm-support} Case 2: $ supp(\bmu)\subset   \cP([0,1)^d)$.} As mentioned, our idea is to use discrete $\bmu^n$ to approximate $\bmu$, and to prove the convergence of $(X^n, Y^n)$, where $\bmu^n = \cL_{\cL_{X^n|Y^n}}$. To overcome the discontinuity of the conditional distribution operator, we shall construct $(X^n, Y^n)$ in a monotone way: $X^n$ is (essentially) increasing component wise and the $\si$-algebra $\si(Y^n)$ is increasing. We proceed in four steps.

\smallskip
{\it Step 1.} For each $n$, denote $D_n := \{x\in [0, 1)^d: 2^n x \in \dbZ^d\}$, and $\cD_n := \{\D_n(x): x\in D_n\}$, where $\D_n(x):= \big\{\tilde x\in [0, 1)^d: x_i \le \tilde x_i < x_i + 2^{-n}\big\}$. Let $\cP^0_n$ denote the set of probability measures $\mu$ such that $supp(\mu) \subset D_n$ and $2^{2dn} \mu(x)\in \dbZ$ for each $x\in D_n$. Note that $|D_n|= 2^{dn}$.

Fix an arbitrary $\mu\in \cP([0,1)^d)$. For each $n$, construct $\mu_n\in \cP^0_n$ as follows: 
\bea
\label{mun}
\mu_n(x) := {1\over 2^{2dn}} \lfloor 2^{2dn} \mu(\D_n(x))\rfloor, \forall x\in D_n \backslash \{0\},\q \mu_n(0) :=  1-\sum_{x\in D_n \backslash \{0\}} \mu_n(x).
\eea
Here $\lfloor y\rfloor $ denotes the largest integer below $y$. Then clearly 
\beaa
0 \le  \mu(\D_n(x)) - \mu_n(x) < {1\over 2^{2dn}},\q \forall x\in D_n \backslash \{0\}.
\eeaa
Let $\xi\in\sigma(U_2)$ be a random variable with $\cL_\xi=\mu$, which is independent with $U_1$. 
Denote $\xi_n := \sum_{x\in D_n\backslash \{0\}} x 1_{\{\xi\in \D_n(x), U_1 \le {\mu_n(x)\over \mu(\D_n(x))}\}}$. Then one can easily see that $\cL_{\xi_n} = \mu_n$, and 
\beaa
&&\dbE\big[|\xi_n - \xi|^2\big] =\dbE\Big[\sum_{x\in D_n\backslash \{0\}} \big|x 1_{\{\xi\in \D_n(x), U_1 \le {\mu_n(x)\over \mu(\D_n(x))}\}} - \xi 1_{\{\xi\in \D_n(x)\}}\big|^2 + |\xi|^2 1_{\{\xi\in \D_n(0)\}}\Big]\\
&&\le C\dbE\Big[\sum_{x\in D_n\backslash \{0\}} \big[\big|\xi-x|^2 1_{\{\xi\in \D_n(x)\}} +|x|^2 1_{\{\xi\in \D_n(x), U_1 > {\mu_n(x)\over \mu(\D_n(x))}\}} \big] + |\xi|^2 1_{\{\xi\in \D_n(0)\}}\Big]\\
&&\le C\dbE\Big[\sum_{x\in D_n\backslash \{0\}} \big[{1\over 2^{2n}} 1_{\{\xi\in \D_n(x)\}} + 1_{\{\xi\in \D_n(x), U_1 > {\mu_n(x)\over \mu(\D_n(x))}\}} \big] + {1\over 2^{2n}} 1_{\{\xi\in \D_n(0)\}}\Big]\\
&&= {C\over 2^{2n}} \sum_{x\in D_n} \dbP(\xi\in \D_n(x)) + C\sum_{x\in D_n\backslash \{0\}}\dbP(\xi\in \D_n(x)) \dbP(U_1 > {\mu_n(x)\over \mu(\D_n(x))})\\
&&= {C\over 2^{2n}} +C\sum_{x\in D_n\backslash \{0\}} \mu(\D_n(x))\big[1- {\mu_n(x)\over \mu(\D_n(x))}\big] \\
&&= {C\over 2^{2n}} +C\sum_{x\in D_n\backslash \{0\}} \big[\mu(\D_n(x))-\mu_n(x)\big] \le  {C\over 2^{2n}} +C\sum_{x\in D_n\backslash \{0\}} {1\over 2^{2dn}} \\
&&= {C\over 2^{2n}} +C [2^{dn}-1]\times  {1\over 2^{2dn}} \le {C\over 2^{n}}.
\eeaa

We emphasize that it is crucial here that we used $2^{2dn}$ instead of $2^{dn}$ in \eqref{mun}. 
Thus  $W_2(\mu_n, \mu)\le {C_0\over 2^{\frac{n}{2}}}$. In particular, this implies that 
 $\cP^0_\infty := \cup_{n\ge 1} \cP^0_n$ is dense in $\cP([0,1)^d)$. 

\smallskip
{\it Step 2.} Note that $\cP^0_n \subset \cP^0_{n+1}$ and $|\cP^0_n|<\infty$ for each $n$. One may construct a sequence of $\Pi_n:=\{O_i^n\}_{i\ge 1}$ (abusing the notation $\Pi$ with that in \eqref{W2}) such that: 

\begin{itemize}
\item For each $n$, $ \Pi_n$ is a partition of $ \cP([0,1)^d)$;

\item For each $n, i$, there exists $\mu^n_i \in \cP^0_{n}$ (not necessarily in $O^n_i$) such that, for any $\mu\in O^n_i$,
\bea
\label{Pin}
  0 \le  \mu(\D_n(x)) - \mu^n_i(x) < {1\over 2^{2dn}},~\forall x\in D_n \backslash \{0\},\q W_2(\mu^n_i, \mu)\le {C_0\over 2^{\frac{n}{2}}}.
 \eea

\item For each $n, j$, $O^{n+1}_j \subset O^n_i$ for some $i$, that is, $\Pi_{n+1}$ is a refinement  of $\Pi_n$.
\end{itemize}
Indeed, for $n=1$, enumerate $\cP_1^0 = \{\tilde \mu^1_i\}_{i\ge 1}$ and set $\tilde O^1_i$ the collection of $\mu\in \cP([0,1)^d)$ satisfying \eqref{Pin} with the $\mu^n_i$ there replaced with $\tilde \mu^1_i$. By Step 1 we have $\cup_{i\ge 1} \tilde O^1_i =  \cP([0,1)^d)$. Then, $O^1_i := \tilde O^1_i \backslash \cup_{k<i} \tilde O^1_k$ (and removing the empty set) is a partition of $\cP([0,1)^d)$. One can easily see that $\Pi_1 = \{O^1_i\}_{i\ge 1}$ satisfy the first two requirements at above.  

Now assume $\Pi_n$ is given and we want to construct $\Pi_{n+1}$. For each $i$, enumerate $\cP_{n+1}^0 = \{\tilde \mu^{n+1}_j\}_{j\ge 1}$ and set $\tilde O^{n,i}_j$ the collection of $\mu\in O^n_i$ satisfying \eqref{Pin} with the $\mu^n_i$ there replaced with $\tilde \mu^{n+1}_j$. By Step 1 we have $\cup_{j\ge 1} \tilde O^{n,i}_j =  O^n_i$. Then, $O^{n,i}_j := \tilde O^{n,i}_i \backslash \cup_{k<j} \tilde O^{n,i}_k$ (and removing the empty set) is a partition of $O^n_i$. One can easily see that $\Pi_{n+1} = \{O^{n,i}_j\}_{i\ge 1, j\ge 1}$ satisfy all the requirements. We note that, when enumerating $\Pi_{n+1}$, we always list $\{O^{n,i_1}_j\}_{j\ge 1}$ before $\{O^{n,i_2}_j\}_{j\ge 1}$ for $i_1<i_2$.
 
 Define $\bmu^n:= \sum_{i\ge 1} \bmu(O^n_i)\d_{\mu^n_i}$. It is clear that $\cW_2(\bmu^n, \bmu) \le {C\over 2^{\frac{n}{2}}}$.

\smallskip

{\it Step 3.} We now construct appropriate $(X^n, Y^n)$ such that $\cL_{\cL_{X^n|Y^n}}=\bmu^n$.  First, denote 
\bea
\label{Ynconstruction}
y^n_0:= 0,\q y^n_i := \sum_{j\le i} \bmu(O^n_j), ~ i\ge 1, \q Y^n:= \sum_{i\ge 1} y^n_i 1_{(y^n_{i-1}, y^n_{i}]}(U_1).
\eea
 Then $\dbP(Y^n=y^n_i) =  \bmu(O^n_i)$. Without loss of generality, assume for each $i,n\ge 1$, $\bmu(O_i^n)>0$. That is,  the support of $\bmu$ is $\cP(\dbR^d)$. Otherwise, we restrict the following analysis to the support of $\bmu$. Moreover, due to our way of enumerating the elements of $\Pi_{n+1}$ , we can easily see that  
 \bea
\label{Ynconstruction2}
\{y^n_i\}_{i\ge 1} \subset \{y^{n+1}_i\}_{i\ge 1},\q Y^{n+1}\le Y^n, \q\mbox{and}\q \si(Y^n)\subset \si(Y^{n+1}).
\eea

Next, we construct $X^n$ recursively. We shall use $D_n$ for the values of $X^n$ and $\D_{2n}$ for the probabilities of $X^n$. To be precise, for $n=1$ and $i\ge 1$, note that $\sum_{x\in D_n} 2^{2dn} \mu^n_i(x) = 2^{2dn} = |D_{2n}|=|\cD_{2n}|$, and for each $x$, $ 2^{2dn} \mu^n_i(x)$ is an integer,  we may have a partition of $D_{2n}$, denoted as $D_{2n} = \cup_{x\in D_n} D_{2n}(\mu^n_i, x)$ such that $|D_{2n}(\mu^n_i, x)| = 2^{2dn} \mu^n_i(x)$.  We then construct:
\beaa
X^1 :=  \sum_{i\ge 1} 1_{\{Y^1=y^1_i\}} \sum_{x\in D_1} x1_{\D_2(\mu^1_i, x)}(U_2),\q\mbox{where}\q \D_{2n}(\mu^n_i, x):= \bigcup_{\tilde x\in D_{2n}(\mu^n_i, x)} \D_{2n}(\tilde x).
\eeaa
One can easily see that $\cL_{\cL_{X^1|Y^1}}=\bmu^1$.

Now assume we have constructed $X^n$ in the following form such that $\cL_{\cL_{X^n|Y^n}}=\bmu^n$:
\bea
\label{Xnconstruction}
X^n :=  \sum_{i\ge 1} 1_{\{Y^n=y^n_i\}} \sum_{x\in D_n} x1_{\D_{2n}(\mu^n_i, x)} (U_2),
\eea
where  $\{D_{2n}(\mu^n_i, x)\}_{x\in D_n}$ (corresponding to  $\D_{2n}(\mu^n_i, x)$) is a partition of $D_{2n}$ with $|D_{2n}(\mu^n_i, x)| = 2^{2dn}\mu^n_i(x)$.  For each $y^{n+1}_j$, there exists unique $i$ such that $y^n_{i-1} < y^{n+1}_j \le y^n_{i}$, then $\{Y^{n+1}= y^{n+1}_j\}\subset \{Y^n= y^n_i\}$ and $O^{n+1}_j \subset O^n_i$. Consider $\mu^n_i$ and $\mu^{n+1}_j$, and note that  $\D_n(x) = \bigcup_{x'\in  D_{n+1}(x)} \D_{n+1}(x')$, where $D_{n+1}(x):= \D_n(x)\cap D_{n+1}$, $x\in D_n$. Now fix an $\mu\in O^{n+1}_j \subset O^n_i$, for any $x\in D_n\backslash \{0\}$, $x'\in  D_{n+1}(x)$, by \eqref{Pin} we have
 \beaa
 0 \le  \mu(\D_n(x)) - \mu^n_i(x) < {1\over 2^{2dn}},\q  0 \le  \mu(\D_{n+1}(x')) - \mu^{n+1}_j(x') < {1\over 2^{2d(n+1)}}.
 \eeaa
 Then, denoting $\e_n(x) :=  \sum_{x'\in D_{n+1}(x)}\mu^{n+1}_j(x') -  \mu^n_i(x)$,
 \bea
 \label{enx}
 &&\e_n(x) <  \sum_{x'\in D_{n+1}(x)} \mu(\D_{n+1}(x')) - \big[  \mu(\D_n(x)) - {1\over 2^{2dn}}\big] = {1\over 2^{2dn}};\\
 && \e_n(x) >  \sum_{x'\in D_{n+1}(x)} \big[\mu(\D_{n+1}(x')) - {1\over 2^{2d(n+1)}}\big] - \mu(\D_n(x))  =- {2^d\over 2^{2d(n+1)}} = -{1\over 2^{2dn +d}}.\nonumber
 \eea
 Note further that $2^{2d(n+1)}\mu^{n+1}_j(x')$ and  $2^{2d(n+1)}\mu^n_i(x)$ are integers. We then construct a partition 
 \bea
 \label{D-partition}
 D_{2(n+1)} = \bigcup_{x'\in D_{n+1}} D_{2(n+1)}(\mu^{n+1}_j ,x')\q\mbox{with}\q |D_{2(n+1)}(\mu^{n+1}_j ,x')| =  2^{2d(n+1)} \mu^{n+1}_j(x')
 \eea
 as follows. First, for each $x\in D_n\backslash\{0\}$ such that $\e_n(x)<0$, consider a partition  $$
 \D_{2n}(\mu^n_i, x)\cap D_{2(n+1)} =\Big( \bigcup_{x'\in D_{n+1}(x)} D_{2(n+1)}(\mu^{n+1}_j, x')\Big)\bigcup D_{2(n+1)}(\mu^n_i, 0,x),
 $$
  where  $|D_{2(n+1)}(\mu^{n+1}_j, x')| = 2^{2d(n+1)}\mu_j^{n+1}(x')$ for each $x'\in D_{n+1}(x)$. We note that 
  $$|D_{2(n+1)}(\mu^n_i, 0,x) |=-2^{2d(n+1)}\e_n(x)>0.
  $$ Next, consider a partition 
\beaa
&&\displaystyle\Big(\D_{2n}(\mu^n_i, 0)\cap D_{2(n+1)}\Big)\bigcup\Big(\bigcup_{x\in D_n\backslash\{0\}, \e_n(x)<0} D_{2(n+1)}(\mu^n_i, 0,x)\Big) \\
&&\displaystyle = \Big(\bigcup_{x\in D_n\backslash\{0\}, \e_n(x)\ge0} D_{2(n+1)}(\mu^n_i, 0,x)\Big)\bigcup D_{2(n+1)}(\mu^n_i, 0,0),  
\eeaa
such that $|D_{2(n+1)}(\mu^n_i, 0,x)| = 2^{2d(n+1)}\e_n(x)$ for each $x\in D_n\backslash \{0\}$ such that $\e_n(x)>0$.  We note that $ D_{2(n+1)}(\mu^n_i, 0,x)$ is empty when $\e_n(x)=0$. Then for each $x\in D_n\backslash\{0\}$ such that $\e_n(x)\ge 0$, we can construct a partition 
  \beaa
  D_{2(n+1)}(\mu^n_i, 0,x)  \bigcup \big(\D_{2n}(\mu^n_i, x)\cap D_{2(n+1)} \big) = \bigcup_{x'\in D_{n+1}(x)} D_{2(n+1)}(\mu^{n+1}_j, x'),
  \eeaa
 such that $|D_{2(n+1)}(\mu^{n+1}_j, x')| = 2^{2d(n+1)}\mu^{n+1}_j(x')$ for each $x'\in D_{n+1}(x)$. Finally, by setting $D_{2(n+1)}(\mu^{n+1}_j, 0)=D_{2(n+1)}(\mu^n_i, 0,0)$, we obtain the desired partition \eqref{D-partition}.
We are now ready to define
\beaa
X^{n+1} :=  \sum_{j\ge 1} 1_{\{Y^{n+1}=y^{n+1}_{j}\}} \sum_{x'\in D_{n+1}} x' 1_{\D_{2(n+1)}(\mu^{n+1}_j, x')} (U_2).
\eeaa
 It is straightforward to verify that $\cL_{\cL_{X^{n+1}|Y^{n+1}}} = \bmu^{n+1}$.
 
Fix $j$ and the corresponding $i$, and consider the event $\{Y^{n+1}=y^{n+1}_{j}\}$. Note that, $X^{n+1}\ge 0 = X^n$ when $U_2\in \D_{2n}(\mu^n_i, 0)$, and for each $x\in D_n\backslash\{0\}$,  $X^{n+1}=x'\ge x = X^n$ when $U_2 \in \D_{2n}(\mu^n_i, x)\cap \D_{2(n+1)}(\mu^{n+1}_j, x')$. Then, by \eqref{enx},
\beaa
&&\dbP\big(X^{n+1}< X^n \big|Y^{n+1}=y^{n+1}_{j}\big) \le \sum_{x\in D_n\backslash\{0\}}\dbP\Big(U_2 \in  \D_{2n}(\mu^n_i, x)\backslash \bigcup_{x'\in D_{n+1}(x)} \D_{2(n+1)}(\mu^{n+1}_j, x')\Big)\\
&&= \sum_{x\in D_n\backslash\{0\}} \Big[\mu^n_i(x)-\sum_{x'\in D_{n+1}(x)} \mu^{n+1}_j(x')\Big]^+ \le  \sum_{x\in D_n\backslash\{0\}} (-\e_n(x))^+\\
&&\le \sum_{x\in D_n\backslash\{0\}} {1\over 2^{2dn+d}} = {2^{dn}-1\over 2^{2dn+d}}\le {1\over 2^{d(n+1)}}.
\eeaa
That implies immediately that
\bea
\label{Xnmon}
\dbP\big(X^{n+1}< X^n\big)\le {1\over 2^{d(n+1)}}.
\eea

\smallskip

 {\it Step 4.} Note that $\sum_{n=1}^\infty {1\over 2^{d(n+1)}} < \infty$. By \eqref{Xnmon} we see that 
 \beaa
 \dbP\Big(\bigcup_{n\ge 1} \bigcap_{m\ge n} \{X^{m+1}\ge X^m\}\Big) = 1.
 \eeaa
  That is, for a.e. $\o\in \O$, $X^n$ is increasing when $n$ is large enough. Thus there exists $X$ such that $X^n\to X$, a.s. Since $X^n$ takes values in $[0, 1)^d$, then by the bounded convergence theorem we have $\lim_{n\to\infty}\dbE[|X^n-X|^2]=0$.
  
Since $U_1$, $U_2$ are independent and $\si(Y^n)$ is increasing in $n$, by \eqref{Ynconstruction} and \eqref{Xnconstruction} we can easily see that $\cL_{X^n|Y^{m}}= \cL_{X^n|Y^n}$ for all $m\ge n$. Moreover, since $Y^{n+1}\le Y^n$, then clearly $Y^n\downarrow Y$, for some random variable $Y$. By \eqref{Ynconstruction} again, 
we see that $Y^n= \sum_{i\ge 1} y^n_i 1_{(y^n_{i-1}, y^n_{i}]}(Y)$,  then we have $\si(Y) = \vee_{n\ge 1} \si(Y^n)$, which implies further that $\cL_{\cL_{X^n|Y}}= \cL_{\cL_{X^n|Y^n}} = \bmu^n$. Thus, by \eqref{W2},
\begin{equation}
\label{verify law}
\begin{aligned}
\cW_2^2(\bmu,\cL_{\cL_{X|Y}})&\le 2\cW_2^2(\bmu,\bmu^n)+2\cW_2^2(\cL_{X^n|Y}, \cL_{\cL_{X|Y}})
\le  \frac{C}{2^{n}}+2\dbE[|X^n-X|^2].
\end{aligned}
\end{equation}
Send $n\to\infty$, we obtain    $\bmu=\cL_{\cL_{X|Y}}$.
\qed

\bigskip

\noindent{\bf Proof of Theorem \ref{thm-coupling}.} For the convenience of the construction below, we assume $(\O, \cF, \dbP) = (\O_0\times \O', \cF_0\otimes \cF', \dbP_0\times \dbP')$ is a product space and both $(\O_0, \cF_0, \dbP_0)$ and $(\O', \cF', \dbP')$ are atomless. Since $(\O, \cF, \dbP)$ is atomless, this assumption is without loss of generality. Indeed, we may construct independent uniformly distributed random variables $U_0, U'$ on $(\O, \cF, \dbP)$, and then view $(\O_0, \cF_0, \dbP_0)$ and $(\O', \cF', \dbP')$ as the canonical spaces of $U_0$ and $U'$, respectively. We shall use $\o$ and $\o'$ to  denote the elements of $\O_0$ and $\O'$, respectively, and thus the elements of $\O$ are denoted as $(\o, \o')$.  Recall \eqref{cPmu12}.  We proceed the proof in three steps. 

{\bf Step 1.} Since $\cP_p(\dbR^d)$ is a Polish space,  for any $\bmu_1, \bmu_2\in\cP_p(\cP_p(\dbR^d))$, by \cite[Theorem 4.1]{OTV} there exists an optimal coupling $\Pi\in \cP(\bmu_1, \bmu_2)$ such that
\bea
\label{Pimu}
\cW_p^p(\bmu_1,\bmu_2)=\int_{\cP_p(\dbR^d)\times\cP_p(\dbR^d)}W_p^p(\mu_1,\mu_2)\Pi(d\mu_1,d\mu_2).
\eea
By Corollary \ref{cor-multidim-support}, on the space $(\O_0, \cF_0, \dbP_0)$ there exist  $\xi_1,\xi_2\in \dbL^0(\cF_0)$ and $\cG:= \si(Y)\subset \cF_0$ such that $\Pi=\cL_{\left(\cL_{\xi_1|\cG},~\cL_{\xi_2|\cG}\right)}$, where the laws are under $\dbP_0$. Denote $\mu_i^{\o}:=\cL_{\xi_i|\cG}(\o)$, $\o \in\O_0$, $i=1,2$. Then, the mapping $\o \in \O_0 \to \mu_i^{\o }\in \cP_p(\dbR^d)$ is $\cG$-measurable. Moreover, for each $\o $, again by \cite[Theorem 4.1]{OTV},  there exists an optimal coupling $\pi^{\o }\in \cP(\mu^{\o }_1, \mu^{\o }_2)\subset \cP(\dbR^{2d})$ such that
\bea
\label{piomega}
W_p^p(\mu^{\o }_1,\mu^{\o }_2)=\int_{\dbR^d\times\dbR^d}|x_1-x_2|^p\pi^{\o }(dx_1,dx_2).
\eea

{\bf Step 2.} In this step, we apply the measurable maximum theorem
\cite[Theorem 18.19]{CAKB}  to choose a version of $\pi^{\o }$ so that the mapping $\o \in \O_0 \to \pi^{\o}\in \cP_p(\dbR^{2d})$ is $\ol\cG$-measurable, where $\ol\cG$ is the $\dbP$-completion of $\cG$. For that purpose, we need to verify the three conditions required there.

Firstly,  by \cite[Lemma 4.4]{OTV}, $\cP(\mu^{\o }_1, \mu^{\o }_2)\subset \cP_p(\dbR^{2d})$ is non-empty and tight. Meanwhile, since $\|\pi\|^p_p=\|\mu_1^{\o}\|_p^p+\|\mu_2^{\o}\|_p^p$, for any $\pi\in\cP(\mu^{\o }_1, \mu^{\o }_2)$, by \cite[Definition 6.8]{OTV}, any weakly convergent sequence in  
$\cP(\mu^{\o }_1, \mu^{\o }_2)$ converges weakly in $\cP_p(\dbR^{2d})$, and consequently converges with respect to $W_p$ by \cite[Theorem 6.9]{OTV}. Thus, $\cP(\mu^{\o }_1, \mu^{\o }_2)$ is a precompact subset of  $\cP_p(\dbR^{2d})$. Moreover, if $W_p(\pi_n,\pi)\to0$, as $n\to\infty$ for any $\{\pi_n\}\subset\cP(\mu^{\o }_1, \mu^{\o }_2)$, then by \cite[Theorem 6.9]{OTV} again, $\pi_n$ converges weakly to $\pi$, which implies $\pi\in\cP(\mu^{\o }_1, \mu^{\o }_2)$ and $\cP(\mu^{\o }_1, \mu^{\o }_2)$ is closed.  Therefore, $\cP(\mu^{\o }_1, \mu^{\o }_2)$ is compact in $\cP_p(\dbR^{2d})$.  This in particular means that the following mapping is compact-valued:
\bea
\label{mapping}
\phi: \o \in \O_0 ~\mapsto ~ \cP(\mu^{\o }_1, \mu^{\o }_2) \in 2^{\cP_p(\dbR^{2d})}.
\eea

Secondly, since the projection mapping is continuous under $W_p$:
\beaa
\operatorname{Proj}: \pi\in \cP_p(\dbR^{2d}) ~\mapsto~ (\pi(dx_1,\dbR^d), \pi(\dbR^d, dx_2))\in \cP_p(\dbR^d)\times \cP_p(\dbR^d),
\eeaa
and $\o \mapsto(\mu_1^{\o },\mu_2^{\o })$ is $\cG$-measurable, then the graph of $\phi$ is $\cG\otimes\cB(\cP_p(\dbR^{2d}))$-measurable:
\beaa
\operatorname{Gr}(\phi)
:=
\{(\o,\pi):\operatorname{Proj}(\pi)=(\mu_1^\o,\mu_2^\o)\}.
\eeaa
Since $\cP_p(\dbR^{2d})$ is Polish, for every open set $O\subset \cP_p(\dbR^{2d})$, by \cite[Theorem 2.12]{Crauel2002} the set 
\beaa
\{\o\in \O_0: \phi(\o)\cap O\neq\emptyset\}
=
\mathrm{Proj}_{\O_0}\big(\operatorname{Gr}(\phi)\cap (\O_0\times O)\big)
\eeaa
is $\overline\cG$-measurable, where $\mathrm{Proj}_{\O_0}$ is the projection onto $\O_0$. That is, the mapping  $\phi$ is  weakly measurable.

Thirdly, by \cite[Theorem 6.9]{OTV}, the following function $\cJ$ on $\cP_p(\dbR^{2d})$ is continuous under $W_p$:
\beaa
\cJ(\pi):= \int_{\dbR^d\times\dbR^d}|x_1-x_2|^p\pi(dx_1,dx_2),~\pi\in \cP_p(\dbR^{2d}).
\eeaa
Note that $\cJ$ is independent of $\omega$. Then, by extending it to $\O_0\times\cP_p(\dbR^{2d})$: $\cJ(\o, \pi) = \cJ(\pi)$, we see that $\cJ$ is a Carath\'eodory function (c.f. \cite[Definition 4.50]{CAKB}) on $\O_0\times\cP_p(\dbR^{2d})$.

We are now ready to apply the measurable selection theorem. Applying \cite[Theorem 18.19]{CAKB} to the mapping $\phi$ and the function $\cJ$, we see that the set-valued mapping 
\beaa
\o\in \O_0 ~\mapsto ~ \arg\min_{\pi\in \phi(\o)}\cJ(\pi)
\eeaa
admits a $\ol\cG$-measurable selector. Noting from \eqref{piomega} that $\pi^\o\in \arg\min_{\pi\in \phi(\o)}\cJ(\pi)$, this exactly means that we can have a $\ol\cG$-measurable version of $\pi^\o$.

{\bf Step 3.} We now extend $Y$ to $\O=\O_0\times \O'$ in an obvious way and construct the desired $X=(X_1, X_2)$ on $\O$. Let $U'$ be a random variable on $(\O', \cF', \dbP')$ with distribution Uniform$([0,1])$. By \cite[Lemma 3.2 (vii)]{Kallenberg2021}, there exists a $\ol\cG\otimes \cB([0,1])$-measurable function $\Phi: \O_0\times [0,1]\to \dbR^{2d}$ such that, for any fixed $\o\in \O_0$,  $\cL_{\Phi(\o, U')}=\pi^\o$. Define
\beaa
X(\o, \o') := \Phi(\o, U'(\o')),\q (\o, \o') \in \O.
\eeaa
Then $X$ is $\ol\cG\otimes \cF'$-measurable. Since we are using the product space, in particular $\ol\cG$ and $U'$ are independent, for any bounded and Borel measurable function $\f: \dbR^{2d}\to\dbR$ we have
\beaa
\dbE[\f (X)|\ol \cG](\o)=\dbE[\f(\Phi(\cdot,U'(\cdot)))|\ol\cG](\o)=\dbE[\f(\Phi(\o,U'(\cdot)))],\q \mbox{for}~\dbP_0\mbox{-a.e.}~\o\in \O_0.
\eeaa
That is, $\cL_{X|\ol\cG}(\o) = \cL_{\Phi(\o,U')} = \pi^\o$, for $\dbP_0$-a.e. $\o$. Since $\cL_{X|\ol\cG}=\cL_{X|\cG}$, $\dbP_0$-a.s., then, 
\bea
\label{X12mu12}
\cL_{(X_1, X_2)|\cG}(\o) = \cL_{X|\cG}(\o) = \pi^\o\in \cP(\mu^\o_1, \mu^\o_2)\q \mbox{for}~\dbP_0\mbox{-a.e.}~\o\in \O_0.
\eea
This implies further that the marginals also coincide:
\beaa
(\cL_{X_1|\cG}, \cL_{X_2|\cG})(\o) =(\mu^\o_1, \mu^\o_2) = (\cL_{\xi_1|\cG}, \cL_{\xi_2|\cG})(\o)\q \mbox{for}~\dbP_0\mbox{-a.e.}~\o\in \O_0.
\eeaa
Thus, for $i=1,2$, 
\bea
\label{marginal}
\cL_{\cL_{X_i|\cG}} = \cL_{\cL_{\xi_i|\cG}} = \bmu_i.
\eea
Moreover, recall that $\Pi = \cL_{(\cL_{\xi_1|\cG}, \cL_{\xi_2|\cG})}$. First by  \eqref{X12mu12},  then by \eqref{piomega} and \eqref{Pimu},  we have
\beaa
&&\dbE[|X_1-X_2|^p] = \dbE\Big[\dbE[|X_1-X_2|^p|\cG]\Big] = \int_{\O_0} \int_{\dbR^d\times \dbR^d} |x_1-x_2|^p \pi^\o(dx_1, dx_2) \dbP_0(d\o)\\
&&= \int_{\O_0} W^p_p(\mu^\o_1, \mu^\o_2) \dbP_0(d\o) = \dbE\Big[ W^p_p(\cL_{\xi_1|\cG}, \cL_{\xi_2|\cG}) \Big]\\
&&=\int_{\cP_p(\dbR^d)\times\cP_p(\dbR^d)}W_p^p(\mu_1,\mu_2)\Pi(d\mu_1,d\mu_2)=\cW_p^p(\bmu_1,\bmu_2).
\eeaa
This, together with \eqref{marginal}, completes the proof.
\qed

\bigskip

We now turn to the proof of Proposition \ref{prop-H}. We need the following simple lemma.
 \begin{lemma}
 \label{lem-H}
Assume $\cG\subset \cF_1 \vee \cF_2$, and $\cG\vee \cF_1$ is independent of $\cF_2$, then $\cG\subset \cF_1$, a.s.
 \end{lemma}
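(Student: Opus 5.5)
The plan is to prove that every $A\in\cG$ lies in $\cF_1$ up to null sets. This is the same $L^2$-contraction idea used in the proof of Proposition \ref{prop-equiv}. Fix $A\in\cG$ and set $\eta:=1_A$. Since $\cG\subset\cF_1\vee\cF_2$, the variable $\eta$ is $\cF_1\vee\cF_2$-measurable. The goal is to show that $\eta=\dbE[\eta|\cF_1]$ a.s.

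The key step uses independence to show that conditioning on $\cF_1\vee\cF_2$ adds nothing beyond $\cF_1$ for variables measurable with respect to $\cG\vee\cF_1$. Let $\zeta$ be any bounded $\cG\vee\cF_1$-measurable variable. The independence of $\cG\vee\cF_1$ and $\cF_2$ gives
\beaa
\dbE[\zeta|\cF_1\vee\cF_2]=\dbE[\zeta|\cF_1],\q\mbox{a.s.}
\eeaa
To check this, it suffices by a monotone class argument to test against products $1_{B_1}1_{B_2}$ with $B_1\in\cF_1$ and $B_2\in\cF_2$. The variable $\zeta 1_{B_1}$ is $\cG\vee\cF_1$-measurable and hence independent of $1_{B_2}$. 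Likewise $\dbE[\zeta|\cF_1]1_{B_1}$ is $\cF_1$-measurable and independent of $1_{B_2}$. It follows that
\beaa
\dbE[\zeta 1_{B_1}1_{B_2}]=\dbE[\zeta1_{B_1}]\dbP(B_2)=\dbE\big[\dbE[\zeta|\cF_1]1_{B_1}\big]\dbP(B_2)=\dbE\big[\dbE[\zeta|\cF_1]1_{B_1}1_{B_2}\big].
\eeaa
The products $1_{B_1}1_{B_2}$ form a $\pi$-system generating $\cF_1\vee\cF_2$, so this identity extends to all of $\cF_1\vee\cF_2$, and the displayed equality follows.

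Now apply this with $\zeta=\eta$, which is allowed because $\eta$ is $\cG$-measurable and so $\cG\vee\cF_1$-measurable. Since $\eta$ is also $\cF_1\vee\cF_2$-measurable, we get
\beaa
\eta=\dbE[\eta|\cF_1\vee\cF_2]=\dbE[\eta|\cF_1],\q\mbox{a.s.}
\eeaa
Hence $1_A$ agrees a.s. with an $\cF_1$-measurable variable. This means $A\in\cF_1\vee\cN$. Since $A\in\cG$ was arbitrary, we conclude $\cG\subset\cF_1$ a.s.

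The only point needing care is the monotone class step, namely that testing on the $\pi$-system $\{B_1\cap B_2\}$ suffices; this is standard. No real obstacle is expected. If one prefers, the conclusion can be finished exactly as in Proposition \ref{prop-equiv} by comparing second moments: $\dbE[\eta^2]=\dbE[(\dbE[\eta|\cF_1])^2]$ forces equality in the conditional Jensen inequality.
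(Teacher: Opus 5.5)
Your proof is correct and is essentially the paper's argument. The paper sets $X:=1_A-\dbE[1_A|\cF_1]$ and uses that $X1_{A_1}$ is $\cG\vee\cF_1$-measurable, hence independent of $\cF_2$, to get $\dbE[X1_{A_1\cap A_2}]=0$ on the generating $\pi$-system; since $X$ is $\cF_1\vee\cF_2$-measurable, this forces $X=0$ a.s. That is exactly your identity $\dbE[1_A|\cF_1\vee\cF_2]=\dbE[1_A|\cF_1]$, written in orthogonality form.
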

 \proof For any $A\in \cG$ and $A_1\in \cF_1$, $A_2\in \cF_2$, denote $X:= 1_A - \dbE[1_A|\cF_1]$.  Note that $X 1_{A_1}$ is which is $\cG\vee \cF_1$ measurable, and thus independent of $\cF_2$. Then we have
 \beaa
 &&\dbE\big[ X 1_{A_1\cap A_2}\big] =  \dbE\big[ (X 1_{A_1}) 1_{A_2}\big] = \dbE\big[ (X 1_{A_1}) \big]\dbP(A_2)\\
 &&= \dbE\Big[ 1_A 1_{A_1} -  \dbE[1_A|\cF_1]1_{A_1} \Big]\dbP(A_2) = \dbE\Big[ 1_A 1_{A_1} -  \dbE[1_A1_{A_1}|\cF_1] \Big]\dbP(A_2)=0.
 \eeaa
 Note that $X$ is $\cF_1 \vee \cF_2$-measurable, and $\cF_1 \vee \cF_2$ is generated by $\{A_1\cap A_2: A_1\in \cF_1, A_2\in \cF_2\}$, then the above implies $X=0$, a.s. That is, $\dbE[1_A|\cF_1] = 1_A$, a.s. Then $A\in \cF_1$, a.s., namely $A\in \cF_1\vee \cN$. Since $A\in \cG$ is arbitrary, we obtain $\cG\subset \cF_1$, a.s.
\qed
 
 \smallskip \smallskip
 
\noindent{\bf Proof of Proposition \ref{prop-H}.} The only if direction is obvious. We now prove the if direction. Fix $t<s$ and assume \eqref{H*} holds true. Let $\dbP^\o$ denote the conditional probability distribution of $\dbP$, conditional on $\cG_{t}$. Then, for $\dbP$-a.e. $\o\in \O$,  $\cG_s \vee \cF^t_s$ and $\cF_t$ are independent under $\dbP^\o$. Note that $\cG_s \subset \cF_s \subset (\cG_t \vee \cF^t_s) \vee \cF_t$, and $\cG_s \vee (\cG_t \vee \cF^t_s) = \cG_s \vee \cF^t_s$, by Lemma \ref{lem-H} we have $\cG_s \subset \cG_t \vee \cF^t_s$, $\dbP^\o$-a.s. for $\dbP$-a.e. $\o$. This implies that $\cG_s \subset \cG_t \vee \cF^t_s$, $\dbP$-a.s.
\qed

\bigskip

\noindent{\bf Proof of Lemma \ref{lem-invariance}.} We proceed in four steps.

{\it Step 1.} Fix a large $m$ and denote  $s_m:= t+{1\over m}<t_1$. By considering $\cP_2(\dbR^d) \subset \cP_1(\dbR^d)$ and $\cP_2(\cP_2(\dbR^{nd})) \subset \cP_1(\cP_1(\dbR^{nd}))$ as subspaces, let $\{O_i\}_{i\ge 1}$ be a partition of  $\cP_2(\dbR^d)$ such that $W_1(\mu, \mu')\le {1\over 2^m}$ for all $\mu, \mu'\in O_i$, and $\{\cO_j\}_{j\ge 1}$  a partition of $\cP_2(\cP_2(\dbR^{nd}))$ such that $\cW_1(\bmu,\bmu')\le {1\over 2^m}$ for all $\bmu,\bmu'\in  \cO_j$. Denote  $\vec B^{s_m}_{1:n} := (B^{s_m}_{t_1},B^{s_m}_{t_2},\cdots,B^{s_m}_{t_n})$ and 
\beaa
E_i := \big\{\cL_{\xi|\cG^0_t} \in O_i\big\},\q E_{ij} := E_i \bigcap \big\{ \cL_{\cL_{\vec B^{s_m}_{1:n}|\cG_{t_n}} \big|\cG_{s_m}}\in \cO_j\big\}, \q E_i':= \big\{\cL_{\xi'|\cG^{'0}_t} \in O_i\big\}.
\eeaa
Then $E_i\in \cG^0_t= \cG_t$, $E_{ij}\in \cG_{s_m}$, $E_i'\in \cG^{'0}_t$, $\{E_i\}_{i\ge 1}$, $\{E'_i\}_{i\ge 1}$ form partitions of $\O$, $\{E_{ij}\}_{j\ge 1}$ form a partition of $E_i$, and $\dbP(E_i) = \dbP(E_i')$. We emphasize that $\cL_{\cL_{B^{s_m}_s|\cG_s}\big|\cG_{s_m}}$ is different from $\cL_{\cL_{B^{s_m}_s|\cG_s|\cG_{s_m}}}$,\footnote{For example, when $\cG_{s} = \cF_s$, we have $\cL_{\cL_{B^{s_m}_s|\cG_s}\big|\cG_{s_m}} =\cL_{\d_{B^{s_m}_s}\big|\cF_{s_m}}= \cL_{(\d_{B^{s_m}_s})}$, which is different from $\cL_{\cL_{B^{s_m}_s|\cG_s|\cG_{s_m}}} =\d_{(\cL_{B^{s_m}_s})} $.} and without loss of generality we assume $\dbP(E_{ij})>0$ for all $i, j$. Moreover, by the (H)-hypothesis,
\bea
\label{law-cFcG}
\cL_{\cL_{\vec B^{s_m}_{1:n}|\cG_{t_n}} \big|\cG_{s_m}} = \cL_{\cL_{\vec B^{s_m}_{1:n}|\cG_{t_n}} \big|\cF_{s_m}}.
\eea

Next, set $\cG'_s:= \cG^{'0}_t \vee \cF^{B^t}_s$, $t\le s\le s_m$. Since $\cF^{B^t}_{s_m}$ has normal distribution and $\dbP(E_i) = \dbP(E_i')$, we can construct a partition $\displaystyle E'_i = \cup_{j\ge 1} E'_{ij}$ such that $E'_{ij}\in  \cG'_{s_m}$ and $\dbP(E'_{ij}) = \dbP(E_{ij})$ for any $j$. 

\smallskip
{\it Step 2.} To construct the $\cG'_s$ for $s>s_m$, we let $\dbP^{\o}$ denote the regular conditional probability distribution (r.c.p.d.), conditional on $\cF_{s_m}$. That is, $\cL_{\cL_{\vec B^{s_m}_{1:n}|\cG_{t_n}} \big|\cF_{s_m}}(\o)$ is equal to the $\dbP^\o$-distribution of $\cL_{\vec B^{s_m}_{1:n}|\cG_{t_n}}$ for $\dbP$-a.e. $\o$, and $\dbP^\o(\O_\o)=1$, where $\O_\o := \{\tilde\o\in \O: \tilde \o_{[0, s_m]} = \o_{[0, s_m]}\}$. Note that the existence of $\dbP^\o$ is guaranteed since $\cF_{s_m}$ is countably generated, see, e.g. \cite[Theorem 1.1.8]{Stroock-Varadhan}. 
For each $\o\in \O$, $A\in \cF_T$, and $E\in \cF_{s_m}$, denote
\bea
\label{Aw}
A^\o :=  \big\{B^{s_m}_{[s_m, T]}(\tilde\o): \tilde \o\in A \cap \O_{\o}\big\},\q E\oplus A^\o := \big\{\tilde \o \in E: B^{s_m}_{[s_m, T]}(\tilde\o)\in A^\o\big\}.
\eea
Given $A\in \cF_s$, it is clear that $A^\o\in \cF^{s_m}_s$, and thus $E\oplus A^\o \in \cG_{s_m} \vee \cF^{s_m}_s$ when $E\in \cG_{s_m}$.

For each $i, j$, by Step 1 we may fix an $\o^{ij}\in E_{ij} \subset E_i$  such that  
\bea
\label{oij}
\cW_1(\bmu^{ij}, \bmu)<{1\over 2^m},\ \forall\ \bmu\in \cO_j,\q\mbox{where}\q \bmu^{ij} := \cL^{\dbP^{\o^{ij}}}_{\cL^{\dbP^{\o^{ij}}}_{\vec B^{s_m}_{1:n}|\cG_{t_n}}}.
\eea
Denote $\dbP^{ij}:= \dbP^{\o^{ij}}$ and  $A^{i,j}:= A^{\o^{ij}}$. For each $s\ge s_m$, we construct $ \cG'_s$ as follows:  
\bea
\label{tildecG'}
  \cG'_s :=  \cG'_{s_m} \vee \si\big(E'_{ij}\oplus A^{i,j}: A\in \cG_s, i,j\ge 1\big).
\eea
In this step we show that  $\dbG'\in \cA(t, \cG^{'0}_t)$. 

First, for any $t\le s_1\le s_2\le s_m$, it is obvious that 
\bea
\label{hypothesis-verify}
\cG'_{s_2}  =  \cG'_{s_1}\vee \cF^{s_1}_{s_2}.
\eea
We next fix $s_m\le s_1\le s_2\le T$. By the (H*)-hypothesis, we have $\cG_{s_2}\subset\cG_{s_1}\vee \cF^{s_1}_{s_2}$, a.s., and thus
\beaa
\cG'_{s_2}\subset \cG'_{s_m}\vee \sigma\big(E'_{ij}\oplus A^{i,j}: A\in \cG_{s_1}\vee \cF_{s_2}^{s_1},  i,j\ge 1\big),~\mbox{a.s.}
\eeaa
Then it suffices to prove that, for any $i, j\ge 1$.
\bea
\label{subsetgs2'}
E'_{ij}\oplus A^{i,j} \in \cG'_{s_1} \vee \cF^{s_1}_{s_2},~\mbox{a.s.},\q\forall A\in \cG_{s_1}\vee \cF_{s_2}^{s_1}.
\eea
To see this, denote
\beaa
\cG_{ij}:= \big\{A\in   \cG_{s_1}\vee \cF_{s_2}^{s_1}: E'_{ij}\oplus A^{i,j} \in \cG'_{s_1} \vee \cF^{s_1}_{s_2},~\mbox{a.s.}\big\}.
\eeaa
First, if $A\in \cG_{s_1}$, by definition $E'_{ij}\oplus A^{i,j} \in \cG'_{s_1}$ and thus $A\in \cG_{ij}$. Next, if $A\in \cF^{s_1}_{s_2}$, by \eqref{Aw} we see that $E'_{ij}\oplus A^{i,j} = \{\o\in E'_{ij}:  B^{s_m}_{[s_1, s_2]}(\o) \in A\}\in \cG'_{s_m} \vee \cF^{s_1}_{s_2}$, and thus we also have $A\in \cG_{ij}$. Moreover, for any $A\in \cG_{ij}$ and disjoint $A_n \in \cG_{ij}$, $n\ge 1$, by \eqref{Aw} one can easily see that
\beaa
E'_{ij}\oplus (A^c)^{i,j} = E'_{ij}\backslash (E'_{ij}\oplus A^{i,j}),\q  E'_{ij}\oplus (\cup_{n\ge 1}A_n)^{i,j}  = \cup_{n\ge 1} \big(E'_{ij}\oplus A_n^{i,j}\big).
\eeaa 
Then it is clear that $A^c, \cup_{n\ge 1} A_n\in \cG_{ij}$. This implies that $\cG_{ij}$ is a $\si$-algebra and thus $\cG_{ij} =  \cG_{s_1}\vee \cF_{s_2}^{s_1}$. That is, \eqref{subsetgs2'} holds, and hence $\cG'_{s_2} \subset \cG'_{s_1}\vee \cF^{s_1}_{s_2}$ for $s_m\le s_1\le s_2\le T$.

Finally, for $t\le s_1 \le s_m \le s_2\le T$, by the previous two cases we have
\beaa
\cG'_{s_2}\subset \cG'_{s_m}\vee \cF^{s_m}_{s_2}\subset (\cG'_{s_1}\vee  \cF^{s_1}_{s_m})\vee \cF^{s_m}_{s_2} = \cG'_{s_1}\vee  \cF^{s_1}_{s_2}.
\eeaa
This completes the proof of $ \dbG'\in \cA(t, \cG^{'0}_t)$.

\smallskip
{\it Step 3.} In this step we show that\footnote{We remark that \eqref{invariance-claim} holds true under the (H)-hypothesis. However, to obtain the estimate for the joint distribution in \eqref{invariance}, we require the stronger (H*)-hypothesis.}
\bea
\label{invariance-claim}
 \cW_1\Big(\cL_{\big(\cL_{\xi'|\cG^{'0}_{t}},\cL_{\vec B^{s_m}_{1:n}|\cG'_{t_n}} \big)},\cL_{\big(\cL_{\xi|\cG^0_{t}}, \ \cL_{\vec B^{s_m}_{1:n}|\cG_{t_n}}\big)}\Big)\le \frac{3}{2^m}.
\eea
Here, denoting $\cP_1^{d,k}:= \cP_1(\dbR^d)\times \cP_1(\dbR^k)$, we extend \eqref{W2} to the space $\cP_1(\cP_1^{d,k})$ as follows:
\bea
\label{W1extend}
\left.\ba{lll}
W_1((\mu_1, \nu_1), (\mu_2, \nu_2)) := W_1(\mu_1, \mu_2) +W_1(\nu_2, \nu_2),\q \mu_1, \mu_2\in \cP_1(\dbR^d), \nu_1,\nu_2\in \cP(\dbR^{k});\\
\cW_1(\bmu_1, \bmu_2) :=  \inf\Big\{ \int_{\cP_1^{d,k}\times\cP_1^{d,k}} W_1((\mu_1,\nu_1), (\mu_2,\nu_2))\Pi(d(\mu_1,\nu_1),\, d(\mu_2,\nu_2)): \\
 \qquad\qquad\Pi(d(\mu_1,\nu_1), \cP_1^{d,k}) = \bmu_1(d(\mu,\nu)),\q  \Pi(\cP_1^{d,k}, d(\mu_2,\nu_2)) = \bmu_2(d(\mu_2,\nu_2))\Big\}
\ea\right.
\eea

First, for each $i\ge 1$, fix $\mu_i\in O_i$. Then, by the definition of $O_i$, we have 
\bea
\label{invariance-est1}
\left.\ba{lll}
\displaystyle\cW_1\Big(\cL_{\big(\cL_{\xi|\cG^0_{t}},\cL_{\vec B^{s_m}_{1:n}|\cG_{t_n}} \big)},\cL_{\big(\sum_{i}\mu_i1_{E_i}, \ \cL_{\vec B^{s_m}_{1:n}|\cG_{t_n}}\big)}\Big)\le \dbE\Big[W_1\big(\cL_{\xi|\cG^0_{t}}, \sum_{i}\mu_i1_{E_i} \big)\Big]\le \frac{1}{2^{m}};\\
\displaystyle\cW_1\Big(\cL_{\big(\cL_{\xi'|\cG^{'0}_{t}},\cL_{\vec B^{s_m}_{1:n}|\cG'_{t_n}} \big)},\cL_{\big(\sum_{i}\mu_i1_{E'_i}, \ \cL_{\vec B^{s_m}_{1:n}|\cG'_{t_n}}\big)}\Big)\le \dbE\Big[W_1\big(\cL_{\xi'|\cG^{'0}_{t}}, \sum_{i}\mu_i1_{E'_i} \big)\Big]\le \frac{1}{2^{m}}.
\ea\right.
\eea
Next, for any $i, j\ge 1$, by \eqref{law-cFcG} and \eqref{oij} we have $\cL_{\{\cL_{\vec B^{s_m}_{1:n}| \cG'_{t_n}}\} | \cG'_{s_m}} =\bmu^{ij}$, $\dbP$-a.s. on $E'_{ij}$. Then, for any measurable function $\Phi: \cP_1(\dbR^d) \times \cP_1(\dbR^{nd})\to \dbR$, we have
\beaa
&&\dbE\Big[\Phi\big(\sum_{i}\mu_i1_{E'_i},\cL_{\vec B^{s_m}_{1:n}|\cG'_{t_n}}  \big)\Big]= \sum_{i, j\ge 1}\dbE\Big[1_{E'_{ij}}\Phi\big(\mu_i,\cL_{\vec B^{s_m}_{1:n}|\cG'_{t_n}}  \big)\Big] \\
&&= \sum_{i, j\ge 1}\dbE\Big[1_{E'_{ij}}\dbE\big[\Phi\big(\mu_i,\cL_{\vec B^{s_m}_{1:n}|\cG'_{t_n}}  \big)\big|\cG'_{s_m}\big]\Big] = \sum_{i, j\ge 1}\dbE\Big[1_{E'_{ij}} \int_{\cP_1(\dbR^{nd})} \Phi\big(\mu_i,\mu  \big) \bmu^{ij}(d\mu)\Big] \\
&&= \sum_{i, j\ge 1}\dbP(E'_{ij}) \int_{\cP_1(\dbR^{nd})} \Phi\big(\mu_i,\mu  \big) \bmu^{ij}(d\mu) = \sum_{i, j\ge 1}\dbP(E_{ij}) \int_{\cP_1(\dbR^{nd})} \Phi\big(\mu_i,\mu  \big) \bmu^{ij}(d\mu);\\
&&=\sum_{i, j\ge 1}\dbE\Big[1_{E_{ij}}\int_{\cP_1(\dbR^{nd})} \Phi\big(\mu_i,\mu  \big) \bmu^{ij}(d\mu)\Big] 
\eeaa
Similarly, denote $\bmu^\o := \cL^{\dbP^{\o}}_{\cL^{\dbP^{\o}}_{\vec B^{s_m}_{1:n}|\cG_{t_n}}}$, we have
\beaa
\dbE\Big[\Phi\big(\sum_{i}\mu_i1_{E_i},\cL_{\vec B^{s_m}_{1:n}|\cG_{t_n}}  \big)\Big]= \sum_{i, j\ge 1}\dbE\Big[1_{E_{ij}}(\o)\int_{\cP_1(\dbR^{nd})} \Phi\big(\mu_i,\mu  \big) \bmu^{\o}(d\mu)\Big].
\eeaa
 In particular, when $\Phi$ is Lipschitz continuous with Lipschitz constant $1$, by \eqref{oij} we have
\beaa
&&\dbE\Big[\Phi\big(\sum_{i}\mu_i1_{E_i},\cL_{\vec B^{s_m}_{1:n}|\cG_{t_n}} \big)\Big] - \dbE\Big[\Phi\big(\sum_{i}\mu_i1_{E'_i},\cL_{\vec B^{s_m}_{1:n}|\tilde\cG'_{t_n}}  \big)\Big]\\
&& = \sum_{i, j\ge 1} \dbE\Big[1_{E_{ij}}(\o)\int_{\cP_1(\dbR^{nd})} \Phi\big(\mu_i,\mu  \big) (\bmu^{\o}-\bmu^{ij})(d\mu)\Big]\\
&&\le \sum_{i, j\ge 1} \dbE\Big[1_{E_{ij}}(\o)\cW_1(\bmu^{\o},\bmu^{ij})\Big]\le \sum_{i, j\ge 1} {1\over 2^m}\dbE\Big[1_{E_{ij}}(\o)\Big] = {1\over 2^m}.
\eeaa
One may easily extend the representation formula \eqref{W1rep} to this setting. Then, by the arbitrariness of $\Phi$, this implies that 
\beaa
\cW_1\Big(\cL_{\big(\sum_{i}\mu_i1_{E'_i}, \ \cL_{\vec B^{s_m}_{1:n}|\cG'_{t_n}}\big)}, \cL_{\big(\sum_{i}\mu_i1_{E_i}, \ \cL_{\vec B^{s_m}_{1:n}|\cG_{t_n}}\big)}\Big) \le {1\over 2^m}.
\eeaa
Combining this with \eqref{invariance-est1}, we obtain \eqref{invariance-claim} immediately.

{\it Step 4.} We now prove \eqref{invariance}. Fix $\e>0$ and $t_1, \cdots, t_n$. Note that
\beaa
&&\cW_1\Big(\cL_{\big(\cL_{\xi'|\cG^{'0}_{t}}, \ \cL_{\vec B^t_{1:n}|\cG'_{t_n}} \big)}, \cL_{\big(\cL_{\xi'|\cG^{'0}_{t}}, \cL_{\vec B^{s_m}_{1:n}|\cG'_{t_n}} \big)}\Big)\le \dbE\Big[W_1\big(\cL_{\vec B^t_{1:n}|\cG'_{t_n}},\cL_{\vec B^{s_m}_{1:n}|\cG'_{t_n}} \big)\Big] \\
&&\le \dbE\Big[\sum_{i=1}^n|B^t_{t_i}-B^{s_m}_{t_i}|\Big]=\frac{2n}{\sqrt{2\pi m}}.
\eeaa
Similarly, $\cW_1\Big(\cL_{\big(\cL_{\xi|\cG^0_{t}}, \ \cL_{\vec B^t_{1:n}|\cG_{t_n}} \big)}, \cL_{\big(\cL_{\xi|\cG^0_{t}}, \cL_{\vec B^{s_m}_{1:n}|\cG_{t_n}} \big)}\Big)\le \frac{2n}{\sqrt{2\pi m}}$. 
Then, for $m$ large enough, by \eqref{invariance-claim} we have
\beaa
\cW_1\Big(\cL_{\big(\cL_{\xi'|\cG^{'0}_{t}}, \cL_{\vec B^t_{1:n}|\cG'_{t_n}}\big)}, \cL_{\big(\cL_{\xi|\cG^0_{t}}, \cL_{\vec B^t_{1:n}|\cG_{t_n}}\big)}\Big) \le \frac{4n}{\sqrt{2\pi m}}+\frac{3}{2^{m} }\le \e.
\eeaa

Finally, for $i=1,\cdots,n$, denote $\vec B^t_{1:i} := (B^t_{t_1},\cdots, B^t_{t_i})$. By Lemma \ref{lem-H} we have
 \beaa
\cL_{(\xi, \vec B^t_{1:i})|\cG_{t_i}}=\cL_{\xi|\cG^0_{t}}\otimes\cL_{\vec B^t_{1:i}|\cG_{t_i}}, \q \cL_{(\xi', \vec B^t_{1:i})|\cG'_{t_i}}=\cL_{\xi'|\cG^{'0}_{t}}\otimes\cL_{\vec B^t_{1:i}|\cG'_{t_i}},
\eeaa
 where $\otimes$ denotes independent compositions. By \eqref{W2} and \eqref{W1extend}, one can easily see that
 \beaa
 W_1(\mu_1\otimes \nu_1, \mu_2\otimes \nu_2) = W_1(\mu_1, \mu_2) + W_1(\mu_2, \nu_2) = W_1((\mu_1, \nu_1), (\mu_2, \nu_2)).
 \eeaa 
 Now let $\Phi: \cP_1(\dbR^{(i+1)d})\to\dbR$ be a  Lipschitz continuous function with Lipschitz constant. Denote 
 \beaa
\hat\Phi(\mu,\nu):=\Phi(\mu\otimes\nu),\q (\mu, \nu)\in \cP_1^{d, id}.
\eeaa
Then $\hat\Phi: \cP_1^{d, id} \to \dbR^d$ is also Lipchitz continuous with Lipschitz constant $1$. Moreover, note that $\cL_{\vec B^t_{1:i}|\cG_{t_i}} = \cL_{\vec B^t_{1:i}|\cG_{t_n}}$, $\cL_{\vec B^t_{1:i}|\cG'_{t_i}} = \cL_{\vec B^t_{1:i}|\cG'_{t_n}} $, thanks to the (H)-hypothesis. 
 Then, by \eqref{invariance-claim},
\beaa
&&\!\!\! \!\!\! \!\!\!  \dbE[\Phi(\cL_{\xi|\cG^0_{t}}\otimes\cL_{\vec B^t_{1:i}|\cG_{t_i}})-\Phi(\cL_{\xi'|\cG^{'0}_{t}}\otimes\cL_{\vec B^t_{1:i}|\cG'_{t_i}})]=\dbE[\hat\Phi(\cL_{\xi|\cG^0_{t}},\cL_{\vec B^t_{1:i}|\cG_{t_i}})-\hat\Phi(\cL_{\xi'|\cG^{'0}_{t}},\cL_{\vec B^t_{1:i}|\cG'_{t_i}} )]\\
&&\!\!\! \!\!\! \!\!\! \le \cW_1\Big(\cL_{\big(\cL_{\xi|\cG^0_{t}}, \cL_{\vec B^t_{1:i}|\cG_{t_n}}\big)}, \cL_{\big(\cL_{\xi'|\cG^{'0}_{t}}, \cL_{\vec B^t_{1:i}|\cG'_{t_n}}\big)}\Big) \le \cW_1\Big(\cL_{\big(\cL_{\xi|\cG^0_{t}}, \cL_{\vec B^t_{1:n}|\cG_{t_n}}\big)}, \cL_{\big(\cL_{\xi'|\cG^{'0}_{t}}, \cL_{\vec B^t_{1:n}|\cG'_{t_n}}\big)}\Big) \le \e.
\eeaa
Since $\Psi$ is arbitrary, by \eqref{W1rep} we prove \eqref{invariance}.
\qed

\bigskip
 \noindent{\bf Proof of Theorem \ref{thm-Ito-general}.} The proof follows from similar arguments as in Theorem  \ref{thm-Ito}, so we shall only sketch it and focus on the estimates that differ from Theorem  \ref{thm-Ito}. Again it suffices to verify $V(T, \bmu_T)- V(0, \bmu_0)$, where $\D t:= {T\over n}$, $t_i:= i\D t$, $i=0,\cdots, n$, and 
\beaa
&&\mu_t:= \cL_{\cX^2_{t}|\mathcal G_{t}}, \q \bmu_t:= \cL_{(\cX^1_t,\mathcal L_{\mu_t})},\q \cX_i^{\th}:=\th\cX^1_{t_{i+1}}+(1-\th)\cX^1_{t_i},\\ &&\mu^\th_i:= \th\mu_{t_{i+1}}+(1-\th)\mu_{t_i},\q \bmu^\th_i := \th \bmu_{t_{i+1}} + (1-\th) \bmu_{t_i}.
\eeaa
Then
\beaa
V(T, \bmu_T)- V(0, \bmu_0)= \sum_{i=0}^{n-1} \Big[V(t_{i+1}, \bmu_{t_{i+1}})- V(t_{i}, \bmu_{t_{i}})\Big] = I^n_1 + I^n_2,
\eeaa
where, 
\beaa
I^n_1 &:=&  \sum_{i=0}^{n-1} \Big[V(t_{i+1}, \bmu_{t_{i+1}})- V(t_{i}, \bmu_{t_{i+1}})\Big] =\sum_{i=0}^{n-1} \int_0^1 \partial_t V(t_i + \th \D t, \bmu_{t_{i+1}}) d\th \D t;\\
I^n_2 &:=&  \sum_{i=0}^{n-1} \Big[V(t_{i}, \bmu_{t_{i+1}})- V(t_{i}, \bmu_{t_i})\Big]\\ 
&=&  \sum_{i=0}^{n-1}\int_0^1  \dbE\Big[{\d\over \d \bmu}V\big(t_{i},\bmu^\th_{i}, \cX^1_{t_{i+1}}, \mu_{t_{i+1}}\big)- {\d\over \d \bmu}V\big(t_{i}, \bmu^\th_{i}, \cX^1_{t_i}, \mu_{t_{i}}\big)\Big] d\th\\
&=&I^n_{2.1}+I^n_{2.2}+I^n_{2.3};\\
I^n_{2,1}&:=&\sum_{i=0}^{n-1}\int_0^1  \dbE\Big[{\d\over \d \bmu}V\big(t_{i},\bmu^\th_{i}, \cX^1_{t_{i+1}}, \mu_{t_{i}}\big)- {\d\over \d \bmu}V\big(t_{i}, \bmu^\th_{i}, \cX^1_{t_i}, \mu_{t_{i}}\big)\Big] d\th;\\
I^n_{2,2}&:=&\sum_{i=0}^{n-1}\int_0^1  \dbE\Big[{\d\over \d \bmu}V\big(t_{i},\bmu^\th_{i}, \cX^1_{t_{i}}, \mu_{t_{i+1}}\big)- {\d\over \d \bmu}V\big(t_{i}, \bmu^\th_{i}, \cX^1_{t_{i}}, \mu_{t_{i}}\big)\Big] d\th;\\
I^n_{2,3}&:=&\sum_{i=0}^{n-1}\int_0^1  \dbE\Big[{\d\over \d \bmu}V\big(t_{i},\bmu^\th_{i}, \cX^1_{t_{i+1}}, \mu_{t_{i+1}}\big)- {\d\over \d \bmu}V\big(t_{i}, \bmu^\th_{i}, \cX^1_{t_{i+1}}, \mu_{t_{i}}\big)\\
&&\qquad\qquad\quad-{\d\over \d \bmu}V\big(t_{i},\bmu^\th_{i}, \cX^1_{t_{i}}, \mu_{t_{i+1}}\big) + {\d\over \d \bmu}V\big(t_{i}, \bmu^\th_{i}, \cX^1_{t_{i}}, \mu_{t_{i}}\big)\Big] d\th.
\eeaa
Following the arguments in Theorem  \ref{thm-Ito} for $I^n_1$ and $I^n_{2,2}$, and following the arguments for the standard It\^{o} formula for $I^n_{2,1}$, we obtain
\beaa
I^n_1 &=& \int_0^T \partial_t V(t_i + \th \D t, \bmu_t) dt + o(1);\\
I^n_{2,1}&=& \int_0^T  \dbE\Big[\partial_x {\d\over \d \bmu}V\big(t,\bmu_t, \cX^1_t, \mu_t\big) \cdot \a^1_t + {1\over 2} \partial_{xx} {\d\over \d \bmu}V\big(t,\bmu_t, \cX^1_t, \mu_t\big) : \b^1_t(\b^1_t)^{\top}\Big] dt + o(1);\\
I^n_{2,2}&=&\int_0^T  \dbE\Big[\partial_{\tilde x}{\d\over \d\mu}{\d\over \d \bmu}V\big(t,\bmu_t,  \cX^1_t, \mu_t, \tilde\cX^2_t\big) \cdot \tilde\a^2_t  + {1\over 2}\partial_{\tilde x\tilde x}{\d\over \d\mu}{\d\over \d \bmu}V\big(t,\bmu_t,\cX^1_t,  \mu_{t}, \tilde\cX^2_t\big) : \tilde\b^2_t(\tilde\b^2_t)^\top\\
 &&\qquad+{1\over 2} \partial_{\tilde x\bar x}  {\d^2\over \d\mu^2}{\d\over \d \bmu}V\big(t,\bmu_{t}, \cX_t^1, \mu_{t}, \tilde\cX^2_{t}, \bar\cX^2_{t}\big) : \tilde\b^2_t \si^\dbG_t (\si^\dbG_t)^\top (\bar\b^2_t)^\top\Big] dt + o(1).
\eeaa

It remains to estimate $I^n_{2,3}$. By \eqref{linearu} we have
\beaa
I^n_{2,3}&=& \sum_{i=0}^{n-1}\int_0^1\int_0^1 \dbE\Big[  \int_{\dbR^d} \Big({\d\over \d\mu}{\d\over \d \bmu}V\big(t_{i},\bmu^\th_{i}, \cX^1_{t_i+1}, \mu^{\th'}_i, \tilde x\big) \\
&&\quad - {\d\over \d\mu}{\d\over \d \bmu}V\big(t_{i},\bmu^\th_{i}, \cX^1_{t_i}, \mu^{\th'}_i, \tilde x\big)\Big) (\mu_{t_{i+1}} -\mu_{t_i})(d\tilde x)\Big]d\th'd\th.
\eeaa
Denote $\D \cX^1_{t_{i+1}} :=  \cX^1_{t_i+1} -  \cX^1_{t_i}$.  Similarly define $\D \tilde\cX^2_{t_{i+1}}$ and $\D \bar\cX^2_{t_{i+1}}$. Assume again without loss of generality that $\a^i, \b^i$ are continuous. Introduce
\beaa
I^n_{2,3,1}&:=&\sum_{i=0}^{n-1}\int_0^1 \dbE\Big[ \int_{\dbR^d}  \Big({\d\over \d\mu}{\d\over \d \bmu}V\big(t_{i},\bmu^\th_{i}, \cX^1_{t_i+1}, \mu_{t_i}, \tilde x\big) \\
&&\quad - {\d\over \d\mu}{\d\over \d \bmu}V\big(t_{i},\bmu^\th_{i}, \cX^1_{t_i}, \mu_{t_i}, \tilde x\big)\Big) (\mu_{t_{i+1}} -\mu_{t_i})(d\tilde x)\Big]d\th\\
&=&\sum_{i=0}^{n-1}\int_0^1 \dbE\Big[{\d\over \d\mu}{\d\over \d \bmu}V\big(t_{i},\bmu^\th_{i},  \cX^1_{t_{i+1}}, \mu_{t_i},\tilde\cX^2_{t_{i+1}}\big)-{\d\over \d\mu}{\d\over \d \bmu}V\big(t_{i},\bmu^\th_{i}, \cX^1_{t_{i+1}}, \mu_{t_i},\tilde\cX^2_{t_i}\big)\Big] d\th\\
&&\quad- {\d\over \d\mu}{\d\over \d \bmu}V\big(t_{i},\bmu^\th_{i},  \cX^1_{t_{i}},\mu_{t_i},\tilde\cX^2_{t_{i+1}}\big)+ {\d\over \d\mu}{\d\over \d \bmu}V\big(t_{i},\bmu^\th_{i}, \cX^1_{t_{i}}, \mu_{t_i},\tilde\cX^2_{t_i}\big)\Big] d\th\\
&=&\sum_{i=0}^{n-1} \dbE\Big[\partial_{x\tilde x}{\d\over \d\mu}{\d\over \d \bmu}V\big(t_{i},\bmu_{t_i},  \cX^1_{t_i}, \mu_{t_i},\tilde\cX^2_{t_i}\big) : \D \cX^1_{t_{i+1}} \big(\D \tilde \cX^2_{t_{i+1}}\big)^\top \Big] + o(1)\\
&=&\sum_{i=0}^{n-1} \dbE\Big[\partial_{x\tilde x}{\d\over \d\mu}{\d\over \d \bmu}V\big(t_{i},\bmu_{t_i},  \cX^1_{t_i}, \mu_{t_i},\tilde\cX^2_{t_i}\big) : \big(\b^1_{t_i} B^{t_i}_{t_{i+1}}\big) \big(\tilde \b^2_{t_i} \tilde B^{t_i}_{t_{i+1}}\big)^\top \Big] + o(1).
\eeaa
By \eqref{Ito-claim} and \eqref{Ito-claim2}, and denoting $\D B^\dbG_{t_{i+1}} := B^\dbG_{t_{i+1}}- B^\dbG_{t_i} = \int_{t_i}^{t_{i+1}} \si^\dbG_t dB_s$, we have
\beaa
I^n_{2,3,1} &=&\sum_{i=0}^{n-1} \dbE\Big[\partial_{x\tilde x}{\d\over \d\mu}{\d\over \d \bmu}V\big(t_{i},\bmu_{t_i},  \cX^1_{t_i}, \mu_{t_i},\tilde\cX^2_{t_i}\big) : \big(\b^1_{t_i} \D B^\dbG_{t_{i+1}}\big) \big(\tilde \b^2_{t_i} \D B^\dbG_{t_{i+1}}\big)^\top \Big] + o(1)\\
&=&\int_0^T \dbE\Big[\partial_{x\tilde x}{\d\over \d\mu}{\d\over \d \bmu}V\big(t,\bmu_t,  \cX^1_t, \mu_t,\tilde\cX^2_t\big) : \b^1_t \si^\dbG_t (\si^\dbG_t)^\top (\tilde \b^2_t )^\top \Big] dt+ o(1) .
\eeaa

Finally, denote
\beaa
I_{2,3,2}^n &:=& I_{2,3}^n-I_{2,3,1}^n\\
&=& \sum_{i=0}^{n-1}\int_0^1\int_0^1\int_0^{\th'}\dbE\Big[\int_{\dbR^d\times \dbR^d} \Big({\d^2\over \d\mu^2}{\d\over \d \bmu}V\big(t_{i},\bmu^\th_{i}, \cX^1_{t_{i+1}}, ~ \mu^{\tilde\th}_i, \tilde x, \bar x\big)\\
&&\q -{\d^2\over \d\mu^2}{\d\over \d \bmu}V\big(t_{i},\bmu^\th_{i}, \cX^1_{t_i}, ~ \mu^{\tilde\th}_i,  \tilde x, \bar x\big)\Big) (\mu_{t_{i+1}} -\mu_{t_i})(d\tilde x) (\mu_{t_{i+1}} -\mu_{t_i})(d\bar x)\Big] d\tilde \th d\th'd\th\\
&=& \sum_{i=0}^{n-1}\dbE\Big[u_i(\cX^1_{t_{i+1}}, \tilde \cX^2_{t_{i+1}}, \bar \cX^2_{t_{i+1}}) - u_i(\cX^1_{t_i}, \tilde \cX^2_{t_{i+1}}, \bar \cX^2_{t_{i+1}}) \\
&&\q\q\q - u_i(\cX^1_{t_{i+1}}, \tilde \cX^2_{t_i}, \bar \cX^2_{t_{i+1}})  - u_i(\cX^1_{t_{i+1}}, \tilde \cX^2_{t_{i+1}}, \bar \cX^2_{t_i}) + u_i(\cX^1_{t_i}, \tilde \cX^2_{t_i}, \bar \cX^2_{t_{i+1}}) \\
&&\q\q\q+ u_i(\cX^1_{t_{i+1}}, \tilde \cX^2_{t_i}, \bar \cX^2_{t_i}) + u_i(\cX^1_{t_i}, \tilde \cX^2_{t_{i+1}}, \bar \cX^2_{t_i}) - u_i(\cX^1_{t_i}, \tilde \cX^2_{t_i}, \bar \cX^2_{t_i})\Big]+o(1),
\eeaa
where  $u_i(x, \tilde x, \bar x):={\d^2\over \d\mu^2}{\d\over \d \bmu}V\big(t_{i},\bmu_{t_{i}}, x, \mu_{t_i}, \tilde x, \bar x\big)$. Since $ \partial^{(2)}_{(x,\tilde x, \bar x)} u_i$ exist and are continuous and bounded, and $\dbE[|\D \cX^j_{t_{i+1}}|^2] \le C\D t$, $j=1,2$, applying the standard Taylor expansion we obtain
\beaa
I_{2,3,2}^n =o(1).
\eeaa
Put everything together and send $n\to \infty$, we prove \eqref{Ito-general}.
\qed

\end{appendices}

%\end{linenumbers}

\bibliographystyle{abbrv}
\bibliography{info_control}

\end{document}